\documentclass[11pt,a4paper]{amsart}
\usepackage{amsmath, amssymb}
\usepackage{amsfonts}
\usepackage[arrow,matrix,curve,cmtip,ps]{xy}
\usepackage{amsthm}
\usepackage{amscd}
\usepackage{tikz}
\usepackage{wrapfig}
\usepackage{hyperref}

\allowdisplaybreaks

\theoremstyle{remark}
\newtheorem{theorem}{{Theorem}}[subsection]
\newtheorem{lemma}[theorem]{Lemma}
\newtheorem{proposition}[theorem]{Proposition}

\newtheorem{remark}[theorem]{Remark}
\newtheorem{definition}[theorem]{Definition}

\numberwithin{equation}{subsection}
\numberwithin{theorem}{subsection}
\newcount\colveccount
\newcommand*\colvec[1]{
        \global\colveccount#1
        \begin{pmatrix}
        \colvecnext
}
\def\colvecnext#1{
        #1
        \global\advance\colveccount-1
        \ifnum\colveccount>0
                \\
                \expandafter\colvecnext
        \else
                \end{pmatrix}
        \fi
}

\newcommand{\R}{\mathbb{R}}

\newcommand{\defeq}{\mathrel{\mathop:}=}


\begin{document}
\title[The Pressure Metric on the Margulis Multiverse]{The Pressure Metric on the Margulis Multiverse}

\author{Sourav Ghosh}

\address{Department of Mathematics \\ Universit\'e Paris Sud \\ Orsay 91400 \\ France}
\email{sourav.ghosh@math.u-psud.fr}

\thanks{The research leading to these results has received funding from the European Research Council under the {\em European Community}'s seventh Framework Programme (FP7/2007-2013)/ERC {\em  grant agreement}}

\date{\today}


\keywords{Margulis Space time, Pressure metric, cross-ratio}

\begin{abstract}
This paper defines the pressure metric on the Moduli space of Margulis spacetimes without cusps and shows that it is positive definite on the constant entropy sections. It also demonstrates an identity regarding the variation of the cross-ratios.

\end{abstract}

\maketitle
\tableofcontents
\pagebreak

\section{Introduction}

In \cite{marg1} and \cite{marg2} Margulis had shown that a non-abelian free group $\Gamma$ with finitely many generators $n$ can act freely and properly as affine transformations on the affine three space $\mathbb{A}$ such that the linear part of the affine action is discrete. In such a case we call the resulting quotient manifold a $\textit{Margulis}$ $\textit{spacetime}$. 

Margulis spacetimes have been studied extensively by Abels--Margulis--Soifer \cite{hgg1}\cite{hgg2}, Charette--Drumm \cite{cd}, Charette--Goldman--Jones \cite{jones}, Choi--Goldman \cite{cg}, Danciger--Gu\'eritaud--Kassel \cite{dgk}\cite{dgk2}, Drumm \cite{D}\cite{D2}, Drumm--Goldman \cite{dg1}\cite{D1}, Fried--Goldman \cite{fried}, Goldman \cite{minko}, Goldman--Labourie \cite{geodesic}, Goldman--Labourie--Margulis \cite{labourie invariant}, Goldman--Margulis \cite{vari}, Kim \cite{kim} and Smilga \cite{smil1}\cite{smil2}\cite{smil3}.

In this paper we will only consider Margulis spacetimes which have no cusps, that is, the linear part of the affine action contains no parabolic elements.  Here we mention that Margulis spacetimes with cusps were shown to exist by Drumm\cite{D2}.

Moreover, in \cite{fried} Fried--Goldman showed that if $\Gamma$ acts on $\mathbb{A}$ as affine transformations giving rise to a Margulis spacetime then a conjugate of the linear part of the action of $\Gamma$ is a subgroup of $\mathsf{SO}^0(2,1)\subset\mathsf{GL}(\R^3)$. Therefore, we can think of Margulis spacetimes as conjugacy classes $[\rho]$ of injective homomorphisms
\begin{align*}
\rho: \Gamma \longrightarrow \mathsf{SO}^0(2,1)\ltimes\mathbb{R}^3.
\end{align*}
We denote the moduli space of Margulis spacetimes with no cusps by $\mathcal{M}$. In \cite{labourie invariant} Goldman--Labourie--Margulis showed  that $\mathcal{M}$ is an open subset of the representation variety $\mathsf{Hom}(\Gamma,\mathsf{SO}^0(2,1)\ltimes\mathbb{R}^3)/\sim$ where the conjugacy is in $\mathsf{SO}^0(2,1)\ltimes\mathbb{R}^3$. Therefore $\mathcal{M}$ is an analytic manifold. Also we know from \cite{me} that the homomorphisms giving rise to Margulis spacetimes are $\textit{Anosov}$.

In this paper, we will use the $\textit{metric}$ $\textit{Anosov}$ property from section 3 of \cite{me} and the theory of thermodynamical formalism (as appeared in section 3 of \cite{pressure metric}) developed by Bowen, Bowen--Ruelle, Parry--Pollicott, Pollicott and Ruelle and others in \cite{Bow}, \cite{BR}, \cite{PP}, \cite{P}, \cite{R} to define the $\textit{entropy}$ and $\textit{intersection}$. Now using the fact that a representation giving rise to a Margulis spacetime is Anosov, we go on to show that the entropy and intersection vary analytically over $\mathcal{M}$. Moreover, we define the pressure metric on $\mathcal{M}$ and study its properties. In particular, we prove the following theorems:

\begin{theorem}\label{mainthm}
Let $\mathcal{M}_k$ be a constant entropy section of the analytic manifold $\mathcal{M}$ with entropy $k$ and let $\mathtt{P}$ be the pressure metric on $\mathcal{M}$. Then $(\mathcal{M}_k,\left.\mathtt{P}\right|_{\mathcal{M}_k})$ is an analytic Riemannian manifold.
\end{theorem}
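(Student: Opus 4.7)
The plan is to transport the pressure-metric construction used for Anosov representations into semisimple Lie groups (as in \cite{pressure metric}) to the Margulis setting, using the metric Anosov flow attached to a Margulis spacetime that is recalled in section 3 of \cite{me}. The construction produces a symmetric, analytic, positive semidefinite $2$-tensor $\mathtt{P}$ on $\mathcal{M}$, and the content of the theorem is that the kernel of $\mathtt{P}$ is transverse to every constant entropy section.

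First, I would fix a compact symbolic model of the Anosov flow independent of $\rho$, and realize each $[\rho]\in\mathcal{M}$ by an analytically varying H\"older reparametrization $f_\rho$ whose periods recover the Margulis invariants of $\rho$. By Ruelle's analyticity of topological pressure, the entropy $h(\rho)$ (the unique positive real such that the topological pressure of $-h(\rho)f_\rho$ vanishes), the intersection $I(\rho_1,\rho_2)$, and the renormalized intersection
\[
\mathbf{J}(\rho_1,\rho_2)=\frac{h(\rho_2)}{h(\rho_1)}I(\rho_1,\rho_2)
\]
are real-analytic on $\mathcal{M}$ and $\mathcal{M}\times\mathcal{M}$ respectively. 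Setting
\[
\mathtt{P}_\rho(v,v):=\left.\frac{\partial^2}{\partial t^2}\right|_{t=0}\mathbf{J}(\rho,\rho_t)
\]
for any analytic curve $\rho_t$ tangent to $v$ at $t=0$ produces an analytic symmetric $2$-tensor on $\mathcal{M}$. A direct second-variation computation identifies $\mathtt{P}_\rho(v,v)$, up to a positive factor, with the asymptotic variance $\operatorname{Var}(\dot f_\rho(v),\mu_\rho)\ge 0$ of the potential derivative with respect to the equilibrium state $\mu_\rho$ of $-h(\rho)f_\rho$. Hence $\mathtt{P}$ is positive semidefinite on $\mathcal{M}$.

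The main obstacle is to upgrade semidefiniteness to definiteness after restriction to $\mathcal{M}_k$. By the standard variance criterion, $\mathtt{P}_\rho(v,v)=0$ if and only if $\dot f_\rho(v)$ is Livsic cohomologous to a constant $c(v)$. Differentiating the implicit relation defining $h(\rho)$ in the direction $v$, and using that $\mu_\rho$ integrates $f_\rho$ to a positive number, forces $c(v)$ to be a nonzero scalar multiple of $\dot h(v)$. On $\mathcal{M}_k$, however, $\dot h(v)=0$, so $c(v)=0$ and $\dot f_\rho(v)$ is cohomologous to zero; equivalently, the first-order variation of every Margulis period of $\rho$ in the direction $v$ vanishes. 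The concluding input is a marked length-spectrum rigidity in the affine setting, namely that the derivative of the Margulis spectrum map is injective at each $\rho\in\mathcal{M}$. Granting this, $v=0$, so $\mathtt{P}|_{\mathcal{M}_k}$ is everywhere positive definite and, by the analyticity established above, an analytic Riemannian metric. This final rigidity step is the delicate input; I would expect to extract it from the Livsic cohomological analysis of the Margulis cocycle appearing in \cite{vari} and \cite{labourie invariant}.
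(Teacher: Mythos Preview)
Your outline matches the paper's approach closely up through the reduction to infinitesimal Margulis-spectrum rigidity: the analyticity of reparametrizations, the variance identification of $\mathtt{P}$, and the conclusion that on $\mathcal{M}_k$ a null vector $v$ forces $\left.\frac{d}{dt}\right|_{t=0}\alpha_{\rho_t}(\gamma)=0$ for all $\gamma$ are exactly the paper's Propositions \ref{ranal}, \ref{p}, and \ref{z1}. The gap is in the final step. You treat the injectivity of the derivative of the Margulis spectrum map as a black box to be extracted from \cite{vari} and \cite{labourie invariant}, but neither reference contains such a statement. The difficulty is that $\alpha_\rho(\gamma)=\langle\mathtt{u}_\rho(\gamma)\mid\nu_\rho(\gamma)\rangle$ depends on both the linear and the translation parts of $\rho$, and vanishing of $\dot\alpha$ does not by itself separate their first-order variations.

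The paper's resolution is the content of Section \ref{cross} and Proposition \ref{final}. One first proves a new identity (Proposition \ref{main}) computing $\lim_n\big(\alpha_\rho(\gamma^n\eta^n)-\alpha_\rho(\gamma^n)-\alpha_\rho(\eta^n)\big)$ and its derivative; combined with Proposition \ref{cratio} this yields, from $\dot\alpha\equiv 0$, that the variation of the \emph{hyperbolic} cross ratio $\mathsf{b}_{\rho_t}$ vanishes (Lemma \ref{z2}). Lemma 10.8 of \cite{pressure metric} then forces $[\dot{\mathtt{L}}_\rho]=0$ in $\mathsf{H}^1_{\mathtt{L}_{\rho_0}}(\Gamma,\mathfrak{so}(2,1))$. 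Only after conjugating the linear part to be constant does $\dot\alpha\equiv 0$ reduce to $\langle\dot{\mathtt{u}}(\gamma)\mid\nu(\gamma)\rangle=0$ for all $\gamma$, and the relevant rigidity input is then Theorem 1.2 of Charette--Drumm \cite{cd}, not \cite{vari} or \cite{labourie invariant}. So your sketch is correct in architecture, but the ``delicate input'' you flag is precisely where the paper's original work lies, and the route you propose to it will not close the argument.
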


\begin{theorem}\label{m2}
The pressure metric $\mathtt{P}$ has signature $(\dim(\mathcal{M})-1,0)$ over the moduli space $\mathcal{M}$.
\end{theorem}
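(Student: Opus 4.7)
The plan is to combine Theorem \ref{mainthm} with the identification of a distinguished one-dimensional null direction for $\mathtt{P}$. Since $\mathtt{P}$ restricts to a positive definite metric on each constant entropy section $\mathcal{M}_k$ by Theorem \ref{mainthm}, the positive index of $\mathtt{P}$ is at least $\dim(\mathcal{M})-1$. To obtain the exact signature $(\dim(\mathcal{M})-1,0)$ it is thus enough to produce, at each $[\rho]\in\mathcal{M}$, a nonzero tangent vector $v_R\in T_{[\rho]}\mathcal{M}$ that is transverse to $T_{[\rho]}\mathcal{M}_{h([\rho])}$ and lies in the radical $\{v : \mathtt{P}(v,\cdot)=0\}$.

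The natural candidate is the \emph{rescaling} direction. For $\rho(\gamma)=(A_\gamma,u_\gamma)$, the family $\rho_t(\gamma)\defeq(A_\gamma,e^t u_\gamma)$ is still a cocycle, so it remains a representation into $\mathsf{SO}^0(2,1)\ltimes\mathbb{R}^3$ and still gives a Margulis spacetime; hence it defines a smooth curve in $\mathcal{M}$ with tangent $v_R$. Since the Margulis invariants scale as $\alpha_\gamma(\rho_t)=e^t\alpha_\gamma(\rho)$, the periods of the associated flow scale by $e^t$ and so the entropy satisfies $h(\rho_t)=e^{-t}h(\rho)$, yielding $dh(v_R)=-h([\rho])\neq 0$. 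In particular $v_R$ is transverse to $T_{[\rho]}\mathcal{M}_{h([\rho])}$ and provides the required direct sum decomposition $T_{[\rho]}\mathcal{M}=T_{[\rho]}\mathcal{M}_{h([\rho])}\oplus\R\cdot v_R$.

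The heart of the argument is to verify $\mathtt{P}(v_R,\cdot)\equiv 0$. The metric Anosov property from \cite{me} associates to each $\rho$ a Hölder cocycle $c_\rho$ on the symbolic model of the flow, constructed from the Margulis invariant, and the pressure metric on $\mathcal{M}$ is (after the renormalization by entropy and intersection described in section 3 of \cite{pressure metric}) the Hessian of the renormalized intersection along analytic variations of $c_\rho$. Along $\rho_t$ the cocycle transforms by $c_{\rho_t}=e^t c_\rho$, so $\dot c_\rho=c_\rho$ is a scalar multiple of the original potential. It is a standard feature of the Bowen-Parry-Pollicott second variation formula that directions producing a scalar multiple of the potential are absorbed by the pressure-zero renormalization built into the definition of $\mathtt{P}$, and hence lie in the kernel of the resulting Hessian. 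Combined with positive definiteness on $\mathcal{M}_{h([\rho])}$ from Theorem \ref{mainthm}, this forces the signature to be $(\dim(\mathcal{M})-1,0)$.

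The main obstacle I anticipate is making the last step genuinely rigorous in the Margulis setting: one must track carefully the interaction between the normalizations used to define $\mathtt{P}$ and the rescaling $c_\rho\mapsto e^t c_\rho$, and exclude the possibility of any further null directions beyond $\R\cdot v_R$. This requires an explicit second-variation computation, analogous to the one for Hitchin representations in \cite{pressure metric}, now performed with the Hölder cocycle given by the Margulis invariant rather than a translation length; one should also confirm that $v_R$ is a bona fide analytic vector field on $\mathcal{M}$ arising from the $\R$-action $(t,\rho)\mapsto\rho_t$ on the representation variety.
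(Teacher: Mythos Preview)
Your approach is essentially the paper's: exhibit the rescaling direction as a nonzero null vector transverse to the constant-entropy section, and combine with Theorem~\ref{mainthm}. The paper uses the parametrization $\rho_t=(\mathtt{L}_\rho,(1+t)\mathtt{u}_\rho)$ rather than $e^t$, but the tangent is the same.

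The ``obstacle'' you anticipate dissolves once you use the tools already in the paper. You do not need to show $\mathtt{P}(v_R,\cdot)\equiv 0$ by a second-variation computation. Observe that along the rescaling path the reparametrization satisfies $\mathsf{f}_{\rho_t}=(1+t)\mathsf{f}_{\rho_0}$ while $h_{\rho_t}=h_{\rho_0}/(1+t)$ (Lemma~\ref{h1}), so the product $h_{\rho_t}\mathsf{f}_{\rho_t}$ is \emph{constant} in $t$; hence $\left.\tfrac{d}{dt}\right|_{t=0}(h_{\rho_t}\mathsf{f}_{\rho_t})=0$ exactly, and Proposition~\ref{p}(3) gives $\mathtt{P}(v_R,v_R)=0$. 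Since $\mathtt{P}$ is non-negative definite (Remark~\ref{nonneg}), any isotropic vector is automatically in the radical, so $\mathtt{P}(v_R,\cdot)=0$ follows for free. Finally, there are no further null directions: any null vector lies in the radical, hence its component in $T_{[\rho]}\mathcal{M}_{h([\rho])}$ is null there, and Theorem~\ref{mainthm} forces that component to vanish. This is exactly how the paper concludes.
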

We call the constant entropy sections of the analytic manifold $\mathcal{M}$ as the $\textit{Margulis}$ $\textit{multiverses}$.\\

The study of pressure metric in the context of representation varieties was started by McMullen and Bridgeman--Taylor respectively in \cite{M}, \cite{BT}. McMullen formulated the Weil--Petersson metric on the Teichm\"uller Space as a Pressure metric. The result was generalised to the quasi-Fuchsian case by Bridgeman--Taylor in \cite{BT}. In \cite{B} the pressure metric was further studied by Bridgeman in the context of the semisimple Lie group $\mathsf{SL}(2,\mathbb{C})$. Recent results by Bridgeman--Canary--Labourie--Sambarino in \cite{pressure metric} extend it in the context of any semisimple Lie group. In this paper, we will study the case where the Lie group in question is $\mathsf{SO}^0(2,1)\ltimes\mathbb{R}^3$, a non-semisimple Lie group.

Moreover, in the process of proving theorem \ref{mainthm} and \ref{m2} we will also come up with a formula for the variation of the cross-ratios in section \ref{cross}.\\

\textbf{Acknowledgments:} I would like to express my gratitude towards my advisor Prof. Francois Labourie for his guidance. I would like to thank Dr. Andres Sambarino for the many helpful discussions that we had. I would also like to thank the organisers of the Aarhus conference on pressure metric and DMS program at MSRI for giving me the opportunity to discuss with Prof. Martin Bridgemann, Prof. Richard Canary, Prof. Olivier Guichard, Prof. Mark Pollicott and Dr. Maria Beatrice Pozzetti.

\section{Background}

\subsection{Hyperbolic geometry}

Let $\left(\mathbb{R}^{2,1}, \langle\mid\rangle\right)$ be a Minkowski spacetime. The quadratic form which corresponds to the metric $\langle\mid\rangle$ is given by 
\begin{align}\label{lorentz}
\mathcal{Q} \defeq 
\begin{pmatrix}
    1 & 0 & 0\\
    0 & 1 & 0\\
    0 & 0& -1\\
\end{pmatrix}.
\end{align}
We denote the group of linear transformations preserving the metric $\langle\mid\rangle$ on $\mathbb{R}^{2,1}$ by $\mathsf{SO}(2,1)$. Moreover, let $\mathsf{SO}^{0}(2,1)$ be the connected component of $\mathsf{SO}(2,1)$ which contains the identity element.

Now let us consider the following spaces
\begin{align*} 
\mathsf{S}^{k} \defeq \lbrace v \in \mathbb{R} \mid \langle v,v \rangle \ = k \rbrace
\end{align*}
where $k\in\mathbb{R}$. We notice that $\mathsf{S}^{-1}$ has two components. Let us denote the component of $\mathsf{S}^{-1}$ which contains ($0,0,1$)$^{t}$ as $\mathbb{H}$. The space $\mathbb{H}$ as a submanifold of $\left(\mathbb{R}^{2,1}, \langle\mid\rangle\right)$ has a constant negative curvature of $-1$ for the restriction of the metric $\langle\mid\rangle$. It is called the $\textit{hyperboloid}$ $\textit{model}$ $\textit{of}$ $\textit{hyperbolic}$ $\textit{geometry}$. Let us denote the unit tangent bundle of $\mathbb{H}$ by $\mathsf{U}\mathbb{H}$. Now we consider the map
\begin{align} \label{theta}
\Theta : \mathsf{SO}^{0}(2,1) &\longrightarrow \mathsf{U}\mathbb{H}\\
\notag g &\longmapsto \left(g(0,0,1)^t, g(0,1,0)^t\right).
\end{align}
The group $\mathsf{SO}^{0}(2,1)$ can be analytically identified with $\mathsf{U}\mathbb{H}$ via the map $\Theta$. Let $\tilde{\phi}_t$ be the geodesic flow on $\mathsf{U}\mathbb{H} \cong \mathsf{SO}^{0}(2,1)$. We note that
\begin{align}
\tilde{\phi}_t \colon  \mathsf{SO}^0(2,1) &\longrightarrow  \mathsf{SO}^0(2,1)\\
\notag g &\longmapsto g\begin{pmatrix}
    1 & 0 & 0\\
    0 & \cosh(t) & \sinh(t)\\
    0 & \sinh(t)& \cosh(t)\\
\end{pmatrix}.
\end{align}

Now we define the $\textit{neutral section}$ ${\nu}$ as follows:
\begin{align}
{\nu} \colon  \mathsf{SO}^0(2,1) &\longrightarrow  \mathsf{S}^{1}\\
\notag g &\longmapsto g(1,0,0)^t,
\end{align}
The neutral section is invariant under the geodesic flow $\tilde{\phi}_t$ and moreover for all $g, h\in\mathsf{SO}^0(2,1)$ we have
\[{\nu}(h.g) = h.{\nu}(g).\]

Let $\partial_{\infty}\mathbb{H}$ be the boundary at infinity of $\mathbb{H}$. As the neutral section is invariant under the geodesic flow it gives rise to an analytic map,
\begin{align}
{\nu} \colon  \partial_{\infty}\mathbb{H} \times \partial_{\infty}\mathbb{H} \setminus \Delta \longrightarrow  \mathsf{S}^{1}.
\end{align}
For any hyperbolic element $\gamma\in\mathsf{SO}^0(2,1)$ acting on $\mathbb{H}$ and for any $x\in\mathbb{H}$ we define
\[{\gamma}^\pm \defeq \lim_{n \to \pm\infty}\gamma^n x.\]
We note that $\gamma^\pm\in\partial_{\infty}\mathbb{H}$  is well defined as the limit is independent of the point $x\in\mathbb{H}$.  We observe that
\begin{align}
\gamma{\nu}({\gamma^-},{\gamma}^+) = {\nu}({\gamma^-},{\gamma}^+),
\end{align}
that is, ${\nu}({\gamma^-},{\gamma}^+)$ is an eigenvector of $\gamma$ with eigenvalue 1. Moreover for $a, b, c, d$ in $\partial_{\infty}\mathbb{H}$ with $a\neq d$ and $b\neq c$ let
\begin{align}
\mathsf{b}(a,b,c,d) \defeq \frac{1}{2}\left( 1 + \langle{\nu}(a,d)\mid{\nu}(b,c)\rangle\right).
\end{align}
Now we list a few identities satisfied by ${\nu}$ and $\mathsf{b}$:
\begin{align}
&{\nu}(a,b) + {\nu}(b,a) = 0, \label{b1}\\
&\langle {\nu}(a,b)\mid {\nu}(a,c)\rangle = 1,\\
&\mathsf{b}(d,b,c,a){\nu}(a,b)+ \mathsf{b}(a,b,c,d){\nu}(a,c) = {\nu}(a,d), \label{b2}\\
&\mathsf{b}(a,b,c,d)=\mathsf{b}(b,a,d,c)=\mathsf{b}(d,c,b,a), \\
&\mathsf{b}(a,b,c,d)+\mathsf{b}(d,b,c,a)=1, \label{b3}\\
&\mathsf{b}(a,w,c,d)\mathsf{b}(w,b,c,d)=\mathsf{b}(a,b,c,d),
\end{align}
where $a,b,c,d,w$ are pairwise distinct points in $\partial_{\infty}\mathbb{H}$.
We notice that $\mathsf{b}$ is the classical $\textit{cross ratio}$.

Let $\Gamma\subset\mathsf{SO}^0(2,1)$ be a non-abelian free group with finitely many generators acting as a Schottky group on $\mathbb{H}$. We denote the surface $\Gamma \backslash \mathbb{H}$ by $\Sigma$ and the unit tangent bundle of $\Sigma$ by $\mathsf{U}\Sigma$. We observe that 
\[\Gamma \backslash \mathsf{U}\mathbb{H}\cong\mathsf{U}\Sigma.\] 
We note that the flow $\tilde{\phi}$ on $\mathsf{U}\mathbb{H}$ induces a flow $\phi$ on $\mathsf{U}\Sigma$. Let $\Lambda_{\infty}\Gamma$ be the $\textit{limit set}$ of the group $\Gamma$. We recall that
\[\Lambda_{\infty}\Gamma=\left.\overline{\Gamma .x}\right.\backslash\Gamma .x\subset\partial_\infty\mathbb{H}\]
where $x$ is any point in $\mathbb{H}$ and $\overline{\Gamma .x}\subset\mathbb{H}\cup\partial_\infty\mathbb{H}$ denote the closure of the space
\[\Gamma .x\defeq\{\gamma.x\mid\gamma\in\Gamma\}\subset\mathbb{H}.\] 
We also recall that as $\Gamma$ is Schottky, the limit set $\Lambda_{\infty}\Gamma$ is a cantor set.

Let us denote the space of all non-wandering points of the geodesic flow $\phi$ on $\mathsf{U}\Sigma$ by $\mathsf{U}_{\hbox{\tiny $\mathrm{rec}$}}\Sigma$ and the lift of the space $\mathsf{U}_{\hbox{\tiny $\mathrm{rec}$}}\Sigma$ in $\mathsf{U}\mathbb{H}$ by $\mathsf{U}_{\hbox{\tiny $\mathrm{rec}$}}\mathbb{H}$. The space $\mathsf{U}_{\hbox{\tiny $\mathrm{rec}$}}\Sigma$ is compact. Furthermore, we note that
\begin{align*}
\mathsf{U}_{\hbox{\tiny $\mathrm{rec}$}}\mathbb{H} &= \left\lbrace (x,v) \in \mathsf{U}\mathbb{H} \mid  \lim_{t \to \pm\infty}\tilde{\phi}^1_t (x,v) \in \Lambda_{\infty}\Gamma\right\rbrace\\
&\cong\left(\Lambda_{\infty}\Gamma\times\Lambda_{\infty}\Gamma\setminus\Delta \right)\times\mathbb{R}
\end{align*}
where $\tilde{\phi}_t(x,v) = (\tilde{\phi}^1_t(x,v), \tilde{\phi}^2_t(x,v))$ and $\Delta\defeq\{(x,x)\mid x\in \Lambda_{\infty}\Gamma\}$.

\subsection{Margulis spacetimes}

Let $\Gamma$ be a non-abelian free group with finitely many generators $n$ and let $\mathbb{A}$ be the affine three space whose underlying vector space is given by $\mathbb{R}^3$. Now we consider an injective homomorphism $\rho$ of $\Gamma$ into the affine linear group $\mathsf{Aff}(\mathbb{A})\cong\mathsf{GL}(\mathbb{R}^3)\ltimes\mathbb{R}^3$, that is,
\begin{align*}
\rho:\Gamma&\longrightarrow\mathsf{GL}(\mathbb{R}^3)\ltimes\mathbb{R}^3\\
\gamma&\longmapsto\left(\mathtt{L}_\rho(\gamma),\mathtt{u}_\rho(\gamma)\right).
\end{align*}
We respectively call $\mathtt{L}_\rho$ the $\textit{linear}$ $\textit{part}$ of $\rho$ and $\mathtt{u}_\rho$ the $\textit{translation}$ $\textit{part}$ of $\rho$.

In \cite{marg1} and \cite{marg2}, Margulis had shown that there exists $\rho$ such that $\rho(\Gamma)$ acts freely and properly on the affine space $\mathbb{A}$ with $\mathtt{L}_\rho(\Gamma)$ being discrete. In such a case we call the quotient manifold $\mathsf{M}_\rho\defeq\rho(\Gamma)\backslash\mathbb{A}$ a $\textit{Margulis}$ $\textit{spacetime}$.

If $\rho$ is an injective homomorphism of $\Gamma$ into $\mathsf{GL}(\mathbb{R}^3)\ltimes\mathbb{R}^3$ giving rise to a Margulis spacetime then using a result proved by Fried--Goldman in \cite{fried} it follows that a conjugate of $\mathtt{L}_\rho(\Gamma)$ is a subgroup of $\mathsf{SO}^0(2,1)$. Therefore without loss of generality we can denote a Margulis spacetime by a conjugacy class of homomorphisms
\[\rho:\Gamma\longrightarrow\mathsf{G}\defeq\mathsf{SO}^0(2,1)\ltimes\mathbb{R}^3.\]
In this paper we will only consider Margulis spacetimes $[\rho]$ such that $\mathtt{L}_\rho(\Gamma)$ contains no parabolic elements.

Let $\mathsf{M}_\rho\defeq\rho(\Gamma)\backslash\mathbb{A}$ be a Margulis spacetime such that $\mathtt{L}_\rho(\Gamma)$ contains no parabolic elements. Then the action of $\mathtt{L}_\rho(\Gamma)$ on $\mathbb{H}$ is Schottky \cite{D2}. Hence $\Sigma_{\mathtt{L}_\rho}\defeq\mathtt{L}_\rho(\Gamma)\backslash\mathbb{H}$ is a non-compact surface with no cusps. 

Now let us denote the tangent bundle of $\mathsf{M}_\rho$ by $\mathsf{T}\mathsf{M}_\rho$. We note that $\mathsf{T}\mathsf{M}_\rho$ carries a Lorentzian metric $\langle\mid\rangle$ as $\mathtt{L}_\rho(\Gamma)\subset\mathsf{SO}^0(2,1)$. Moreover, we consider the following subspace
\[\mathsf{U}\mathsf{M}_\rho\defeq\{(X,v)\in \mathsf{T}\mathsf{M}_\rho\mid\langle v\mid v\rangle_X=1\}.\]
We note that 
\[\mathsf{U}\mathsf{M}_\rho\cong\rho(\Gamma)\backslash\mathsf{U}\mathbb{A}\] where $\mathsf{U}\mathbb{A}\defeq\mathbb{A}\times\mathsf{S}^{1}$. Let us denote the induced flow on $\mathsf{U}\mathsf{M}_\rho$ coming from the geodesic flow $\tilde{\Phi}$ on $\mathsf{T}\mathbb{A}$ by $\Phi$. Note that for any real number $t$,
\begin{align}
\tilde{\Phi}_t : \mathsf{T}\mathbb{A} &\longrightarrow \mathsf{T}\mathbb{A}\\
\notag(X,v)&\longmapsto (X+tv,v).
\end{align}
Now let $\mathsf{U}_{\hbox{\tiny $\mathrm{rec}$}}\mathsf{M}_\rho$ be the space of all non-wandering points of the flow $\Phi$ on $\mathsf{U}\mathsf{M}_\rho$ and also let $\mathsf{U}^\rho_{\hbox{\tiny $\mathrm{rec}$}}\mathbb{A}$ be the lift of $\mathsf{U}_{\hbox{\tiny $\mathrm{rec}$}}\mathsf{M}_\rho$ into the space $\mathsf{U}\mathbb{A}$. Moreover, we denote the lift of $\mathsf{U}_{\hbox{\tiny $\mathrm{rec}$}}\Sigma_{\mathtt{L}_\rho}$ in $\mathsf{U}\mathbb{H}$ by $\mathsf{U}^\rho_{\hbox{\tiny $\mathrm{rec}$}}\mathbb{H}$.

\begin{theorem}{[Goldman--Labourie--Margulis]}{(see \cite{labourie invariant})} \label{N}
Let $\rho$ be an injective homomomorphism of $\Gamma$ into $\mathsf{G}$ which gives rise to a Margulis spacetime and let $\mathtt{L}_\rho(\Gamma)$ contain no parabolic elements. Then there exists a positive H\"older continuous function
\[{f}_\rho: \mathsf{U}^\rho_{\hbox{\tiny $\mathrm{rec}$}}\mathbb{H}\longrightarrow\mathbb{R}\]
and a map 
\[{N}_\rho: \mathsf{U}^\rho_{\hbox{\tiny $\mathrm{rec}$}}\mathbb{H}\longrightarrow\mathbb{A}\]
such that for all $\gamma\in\Gamma$, $g\in\mathsf{U}^\rho_{\hbox{\tiny $\mathrm{rec}$}}\mathbb{H}$ and $t\in\mathbb{R}$ we have
\begin{enumerate}
\item ${f}_\rho\circ\mathtt{L}_\rho(\gamma) = {f}_\rho$,
\item ${N}_\rho\circ\mathtt{L}_\rho(\gamma) = \rho(\gamma){N}_\rho$,
\item ${N}_\rho(\tilde{\phi_t}g) = {N}_\rho(g) + \left(\int\limits_{0}^{t}{f}_\rho(\tilde{\phi_s}(g))ds\right){\nu}(g).$
\end{enumerate}
\end{theorem}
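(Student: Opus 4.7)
The plan is to view $N_\rho$ as a $\rho$-equivariant section of the affine $\mathbb{A}$-bundle over $\mathsf{U}_{\mathrm{rec}}\Sigma_{\mathtt{L}_\rho}$ and to correct a naive equivariant lift so that its flow derivative is forced into the one-dimensional neutral direction spanned by $\nu$. Since $\mathtt{L}_\rho(\Gamma)$ is Schottky with no parabolics, the flow on $\mathsf{U}_{\mathrm{rec}}\Sigma_{\mathtt{L}_\rho}$ is hyperbolic, and the flat $\mathbb{R}^3$-bundle associated to $\mathtt{L}_\rho$ admits a natural H\"older, $\mathtt{L}_\rho$-equivariant, and $\tilde{\phi}$-invariant splitting
\[
\mathbb{R}^3 = E^+(g) \oplus E^-(g) \oplus E^0(g),
\]
where $E^\pm(g)$ is the null line over the forward/backward endpoint $g^\pm$ of the geodesic through $g$ and $E^0(g) = \mathbb{R}\nu(g)$. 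On the periodic orbit of a hyperbolic $\gamma\in\Gamma$ with attracting eigenvalue $\lambda_\gamma>1$, the return map $\mathtt{L}_\rho(\gamma)$ acts on $E^+, E^-, E^0$ by $\lambda_\gamma, \lambda_\gamma^{-1}, 1$ respectively; compactness of $\mathsf{U}_{\mathrm{rec}}\Sigma_{\mathtt{L}_\rho}$ upgrades this to uniform exponential expansion/contraction rates along all recurrent orbits.

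Using a partition of unity on the compact quotient I first fix any H\"older $\rho$-equivariant map $N_0 : \mathsf{U}^\rho_{\mathrm{rec}}\mathbb{H} \to \mathbb{A}$, smooth along the flow direction, and decompose its flow derivative $h := \partial_{\tilde\phi} N_0 = h^+ + h^- + h^0$ according to the splitting. I then set
\[
\psi^-(g) := \int_0^\infty h^-(\tilde{\phi}_t g)\,dt, \qquad \psi^+(g) := -\int_{-\infty}^{0} h^+(\tilde{\phi}_t g)\,dt.
\]
Because $E^\pm(\tilde\phi_t g)=E^\pm(g)$ as lines in $\mathbb{R}^3$ (they depend only on $g^\pm$), each integrand remains in a fixed line. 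The contraction of $E^-$ by $\mathtt{L}_\rho(\gamma)$ on periodic orbits, promoted to uniform exponential decay of the Euclidean norm of $h^-(\tilde\phi_t g)$ on recurrent orbits via Anosov shadowing, makes $\psi^-$ converge; the symmetric argument handles $\psi^+$. Both corrections are $\mathtt{L}_\rho$-equivariant and H\"older, and differentiating under the integral sign gives $\partial_{\tilde\phi}\psi^- = -h^-$ and $\partial_{\tilde\phi}\psi^+ = -h^+$. Setting $N_\rho := N_0 + \psi^+ + \psi^-$ and defining $f_\rho$ by $\partial_{\tilde\phi} N_\rho = h^0 =: f_\rho\,\nu$, property (1) follows from the $\mathtt{L}_\rho$-equivariance of $h^0$ and $\nu$, property (2) holds by construction, and property (3) is the fundamental theorem of calculus applied along flow lines.

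Positivity of $f_\rho$ is the main obstacle. Integrating property (3) over one period of the closed orbit of a hyperbolic $\gamma\in\Gamma$, combining with (2), and pairing with $\nu(\gamma^-,\gamma^+)$ using that $\mathtt{L}_\rho(\gamma) \in \mathsf{SO}^0(2,1)$ preserves the Minkowski form and fixes $\nu(\gamma^-,\gamma^+)$, yields
\[
\int_0^{T_\gamma} f_\rho(\tilde\phi_s g)\,ds \;=\; \langle \mathtt{u}_\rho(\gamma), \nu(\gamma^-,\gamma^+)\rangle \;=\; \alpha(\gamma),
\]
the classical Margulis invariant of $\gamma$. Properness of $\rho(\Gamma)\curvearrowright\mathbb{A}$ forces, via Margulis's opposite sign lemma, all $\alpha(\gamma)$ to share a common sign, which I arrange to be positive up to reorienting $\nu$. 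To upgrade this orbit-averaged positivity to pointwise positivity I have the freedom to replace $N_\rho$ by $N_\rho + \psi^0\,\nu$ for a H\"older $\mathtt{L}_\rho$-invariant scalar $\psi^0$, which changes $f_\rho$ by the Livsic coboundary $\partial_{\tilde\phi}\psi^0$; a positive-Livsic type argument (or, alternatively, the direct geometric construction of $N_\rho$ using the affine flow on the Margulis spacetime as in \cite{labourie invariant}) then produces a cohomologous strictly positive representative. This conversion from periodic positivity to pointwise positivity is the delicate step where I expect the real work to lie.
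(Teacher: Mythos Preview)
The paper does not prove this theorem: it is stated as a background result attributed to Goldman--Labourie--Margulis with the citation ``(see \cite{labourie invariant})'' and no proof is given. So there is no ``paper's own proof'' to compare your proposal against.

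That said, your outline is essentially the argument of \cite{labourie invariant}. The H\"older splitting $\mathbb{R}^3 = E^+ \oplus E^- \oplus E^0$ along recurrent orbits, the correction of a naive equivariant section by the converging integrals $\psi^\pm$ to kill the unstable and stable components of the flow derivative, and the identification of the period integral with the Margulis invariant $\alpha_\rho(\gamma)$ are all exactly as in that paper. You have correctly located the only nontrivial remaining step: upgrading positivity of all period integrals (equivalently, positivity of $\int f_\rho\,d\mu$ for all $\phi$-invariant probability measures, which is precisely the content of Theorem~\ref{glm}(2)) to pointwise positivity of a Liv\v{s}ic-cohomologous representative. This step is not proved in \cite{labourie invariant} itself but in the appendix of \cite{geodesic} (Lemmas A.1 and A.2 there), and indeed the present paper invokes those lemmas explicitly in the proof of Proposition~\ref{ranal}. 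Your parenthetical references to both a ``positive-Liv\v{s}ic type argument'' and the construction in \cite{labourie invariant} are therefore on target, though you should be aware that the argument requires strict positivity of the integral against \emph{every} invariant measure, not merely over periodic orbits, and that this stronger input is supplied by Theorem~\ref{glm} rather than by the opposite sign lemma alone.
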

The map ${N}_\rho$ is called a $\textit{neutralised section}$. Moreover, the following result was proved in \cite{geodesic} by Goldman--Labourie:
\begin{theorem}{[Goldman--Labourie]} \label{commute}
Let $\rho$ be an injective homomomorphism of $\Gamma$ into $\mathsf{G}$ which gives rise to a Margulis spacetime and let $\mathtt{L}_\rho(\Gamma)$ contains no parabolic elements. Also let $\mathtt{N}_\rho\defeq(N_\rho,\nu)$ where ${N}_\rho$ is a neutralised section. Then there exists an injective map $\hat{\mathtt{N}}_\rho$ such that the following diagram commutes,
\[
\begin{CD}
\mathsf{U}^\rho_{\hbox{\tiny $\mathrm{rec}$}}\mathbb{H} @> \mathtt{N}_\rho >> \mathsf{U}\mathbb{A} \\
@ V{\pi} VV @ VV {\pi} V \\
{\mathsf{U}_{\hbox{\tiny $\mathrm{rec}$}}\Sigma_{\mathtt{L}_\rho}} @> {\hat{\mathtt{N}}_\rho} >> {\mathsf{U} \mathsf{M}}_\rho. \\
\end{CD}
\]
Moreover, $\hat{\mathtt{N}}_\rho$ gives an orbit equivalent H\"older homeomorphism between $\mathsf{U}_{\hbox{\tiny $\mathrm{rec}$}}\Sigma_{\mathtt{L}_\rho}$ and $\mathsf{U}_{\hbox{\tiny $\mathrm{rec}$}}\mathsf{M}_\rho$.
\end{theorem}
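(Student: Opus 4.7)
The plan is to construct $\hat{\mathtt{N}}_\rho$ by showing that $\mathtt{N}_\rho = (N_\rho,\nu)$ is $\Gamma$-equivariant and descends to the quotients, then to establish injectivity, H\"older regularity, orbit equivalence, and finally that the image coincides with $\mathsf{U}_{\hbox{\tiny $\mathrm{rec}$}}\mathsf{M}_\rho$.

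\textbf{Equivariance and descent.} The element $\rho(\gamma)=(\mathtt{L}_\rho(\gamma),\mathtt{u}_\rho(\gamma))$ acts on $\mathsf{U}\mathbb{A}=\mathbb{A}\times\mathsf{S}^{1}$ by $(X,v)\mapsto(\mathtt{L}_\rho(\gamma)X+\mathtt{u}_\rho(\gamma),\,\mathtt{L}_\rho(\gamma)v)$. The first coordinate equivariance $N_\rho\circ\mathtt{L}_\rho(\gamma)=\rho(\gamma)\circ N_\rho$ is property (2) of Theorem \ref{N}, and the second $\nu\circ\mathtt{L}_\rho(\gamma)=\mathtt{L}_\rho(\gamma)\circ\nu$ is the equivariance rule for the neutral section. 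Hence $\mathtt{N}_\rho$ is $\Gamma$-equivariant and descends to a unique continuous $\hat{\mathtt{N}}_\rho$ making the square commute.

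\textbf{Injectivity.} Suppose $\mathtt{N}_\rho(g_1)=\mathtt{N}_\rho(g_2)$. Since $\nu$ factors through the pair of endpoints in $\partial_\infty\mathbb{H}\times\partial_\infty\mathbb{H}\setminus\Delta$, the equality $\nu(g_1)=\nu(g_2)$ forces $g_1$ and $g_2$ to lie on the same geodesic orbit, so $g_2=\tilde{\phi}_t g_1$ for some $t\in\mathbb{R}$. Property (3) of Theorem \ref{N} then yields
\[
0 \ =\ N_\rho(g_2)-N_\rho(g_1)\ =\ \left(\int_0^{t}f_\rho(\tilde{\phi}_s g_1)\,ds\right)\nu(g_1),
\]
and since $f_\rho>0$ while $\nu(g_1)\in\mathsf{S}^{1}$ is nonzero, we must have $t=0$ and $g_1=g_2$. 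Injectivity of $\hat{\mathtt{N}}_\rho$ follows by combining this with equivariance: if $\rho(\gamma)\mathtt{N}_\rho(g_1)=\mathtt{N}_\rho(g_2)$, rewrite it as $\mathtt{N}_\rho(\mathtt{L}_\rho(\gamma)g_1)=\mathtt{N}_\rho(g_2)$ and conclude that $\pi(g_1)=\pi(g_2)$.

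\textbf{Orbit equivalence and regularity.} Property (3) rewrites as
\[
\mathtt{N}_\rho(\tilde{\phi}_t g)\ =\ \tilde{\Phi}_{T(g,t)}\,\mathtt{N}_\rho(g),\qquad T(g,t)\defeq\int_0^{t}f_\rho(\tilde{\phi}_s g)\,ds,
\]
exhibiting $\mathtt{N}_\rho$ as an orbit-preserving map whose time change is a strictly increasing H\"older cocycle, since $f_\rho$ is positive H\"older. H\"older regularity of $\mathtt{N}_\rho$ then follows from the H\"older regularity of $N_\rho$ provided by Theorem \ref{N} together with the analyticity of $\nu$, and passes to $\hat{\mathtt{N}}_\rho$.

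\textbf{The image is $\mathsf{U}_{\hbox{\tiny $\mathrm{rec}$}}\mathsf{M}_\rho$.} This is the principal obstacle. Compactness of $\mathsf{U}_{\hbox{\tiny $\mathrm{rec}$}}\Sigma_{\mathtt{L}_\rho}$ makes $\hat{\mathtt{N}}_\rho(\mathsf{U}_{\hbox{\tiny $\mathrm{rec}$}}\Sigma_{\mathtt{L}_\rho})$ compact and $\Phi$-invariant, and the orbit equivalence above guarantees every orbit in the image is the image of a recurrent $\tilde{\phi}$-orbit, so the image lies in the non-wandering set. For the reverse inclusion, I would use properness of the $\rho(\Gamma)$-action on $\mathbb{A}$ to argue that any $\Phi$-non-wandering vector must project to a $\phi$-recurrent vector on $\Sigma_{\mathtt{L}_\rho}$, and then use the strict monotonicity of the time change $T(g,\cdot)$ and the affine form of $\tilde{\Phi}_t$ to identify this unique lift with a point on the image of $\mathtt{N}_\rho$. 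Once the image is shown to equal $\mathsf{U}_{\hbox{\tiny $\mathrm{rec}$}}\mathsf{M}_\rho$, compactness of the source and Hausdorffness of the target promote the continuous bijection $\hat{\mathtt{N}}_\rho$ to a H\"older homeomorphism.
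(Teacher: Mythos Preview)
The paper does not prove this theorem; it is stated with attribution to Goldman--Labourie and cited from \cite{geodesic}, with no argument supplied. So there is no ``paper's own proof'' to compare against, and your proposal stands or falls on its own merits.

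Your first three steps are essentially correct. Equivariance and descent are routine. Your injectivity argument is valid once one observes that $\nu:\partial_\infty\mathbb{H}^{(2)}\to\mathsf{S}^{1}$ is an $\mathsf{SO}^0(2,1)$-equivariant diffeomorphism (both sides are $\mathsf{SO}^0(2,1)/A$ for the same one-parameter subgroup $A$ given by the geodesic flow), so $\nu(g_1)=\nu(g_2)$ does force $g_1,g_2$ onto the same oriented geodesic; then positivity of $f_\rho$ finishes it. The orbit-equivalence formula is exactly property~(3) of Theorem~\ref{N}. One small overreach: Theorem~\ref{N} as stated only asserts H\"older regularity of $f_\rho$, not of $N_\rho$, so you are implicitly importing an extra regularity statement from \cite{labourie invariant}.

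The genuine gap is your fourth step, which you yourself flag as the ``principal obstacle.'' The forward inclusion is fine: the image is compact, $\Phi$-invariant, and contains all the closed orbits coming from $\Gamma$, hence lies in the non-wandering set. The reverse inclusion, however---that every $\Phi$-non-wandering point of $\mathsf{U}\mathsf{M}_\rho$ already lies in the image of $\hat{\mathtt{N}}_\rho$---is the substantive content of the Goldman--Labourie result and does not follow formally from properness. Your sketch (``any $\Phi$-non-wandering vector must project to a $\phi$-recurrent vector on $\Sigma_{\mathtt{L}_\rho}$'') presupposes a map from $\mathsf{U}_{\hbox{\tiny $\mathrm{rec}$}}\mathsf{M}_\rho$ back to $\mathsf{U}_{\hbox{\tiny $\mathrm{rec}$}}\Sigma_{\mathtt{L}_\rho}$, which is exactly what $\hat{\mathtt{N}}_\rho^{-1}$ would be; so the argument is circular as written. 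In \cite{geodesic} this direction requires a separate dynamical analysis of recurrent spacelike affine lines, not merely the properties of $N_\rho$. A secondary point: compactness of the source and Hausdorffness of the target upgrade a continuous bijection to a homeomorphism, but not to a bi-H\"older one; your last sentence claims more than the argument delivers.
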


We note that, if $\rho$ is an injective homomorphism of $\Gamma$ into $\mathsf{G}$ then $\mathtt{L}_\rho$ is an injective homomorphism of $\Gamma$ into $ \mathsf{SO}^0(2,1)$ and $\mathtt{u}_\rho$ satisfies the $\textit{cocycle}$ $\textit{identity}$, that is, for all $\gamma_1,\gamma_2\in\Gamma$
\[\mathtt{u}_\rho(\gamma_1.\gamma_2)=\mathtt{L}_\rho(\gamma_1)\mathtt{u}_\rho(\gamma_2)+\mathtt{u}_\rho(\gamma_1).\] 
Let us denote the space of all injective homomorphisms from the group $\Gamma$ into a Lie group $G$ by $\mathsf{Hom}(\Gamma,G)$ and the space of cocycles by $\mathsf{Z}^1({\mathtt{L}_\rho}(\Gamma), \mathbb{R}^3)$. We denote the space of all homomorphisms $\rho$ in $\mathsf{Hom}(\Gamma, \mathsf{G})$ such that $\rho(\Gamma)$ acts freely and properly on $\mathbb{A}$ and $\mathtt{L}_\rho(\Gamma)$ is discrete containing no parabolic elements by $\mathsf{Hom}_{\hbox{\tiny $\mathrm{M}$}}(\Gamma, \mathsf{G})$. We note that any homomorphism $\rho$ in $\mathsf{Hom}_{\hbox{\tiny $\mathrm{M}$}}(\Gamma, \mathsf{G})$ gives rise to a Margulis spacetime 
\begin{align*}
\mathsf{M}_{\rho} \defeq \rho(\Gamma)\backslash \mathbb{A} .
\end{align*}

Let us denote the space of all $\varrho$ in $\mathsf{Hom}(\Gamma, \mathsf{SO}^0(2,1))$ such that $\varrho(\Gamma)$ is Schottky by $\mathsf{Hom}_{\hbox{\tiny $\mathrm{S}$}}(\Gamma, \mathsf{SO}^0(2,1))$. We note that $\mathsf{Hom}_{\hbox{\tiny $\mathrm{S}$}}(\Gamma, \mathsf{SO}^0(2,1))$ is an analytic manifold and for any $\varrho$ in $\mathsf{Hom}_{\hbox{\tiny $\mathrm{S}$}}(\Gamma, \mathsf{SO}^0(2,1))$ the tangent space $\mathsf{T}_{\varrho}\mathsf{Hom}_{\hbox{\tiny $\mathrm{S}$}}(\Gamma, \mathsf{SO}^0(2,1))$ of $\mathsf{Hom}_{\hbox{\tiny $\mathrm{S}$}}(\Gamma, \mathsf{SO}^0(2,1))$ at the point $\varrho$ can be identified with $\mathsf{Z}^1(\varrho(\Gamma),\mathbb{R}^3)$ and $\mathsf{Hom}_{\hbox{\tiny $\mathrm{M}$}}(\Gamma, \mathsf{G})$ can be identified with a subset of the tangent bundle $\mathsf{T}\mathsf{Hom}_{\hbox{\tiny $\mathrm{S}$}}(\Gamma, \mathsf{SO}^0(2,1))$. Moreover, the following map
\begin{align}
\mathtt{L} : \mathsf{Hom}_{\hbox{\tiny $\mathrm{M}$}}(\Gamma, \mathsf{G}) &\longrightarrow\mathsf{Hom}_{\hbox{\tiny $\mathrm{S}$}}(\Gamma, \mathsf{SO}^0(2,1))\\
\notag \rho &\longmapsto \mathtt{L}_\rho
\end{align}
is surjective\cite{D2}.
\begin{lemma}
The space $\mathsf{Hom}_{\hbox{\tiny $\mathrm{M}$}}(\Gamma, \mathsf{G})$ is an analytic manifold.
\end{lemma}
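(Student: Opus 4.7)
The plan is to realize $\mathsf{Hom}_{\hbox{\tiny $\mathrm{M}$}}(\Gamma, \mathsf{G})$ as an open subset of the analytic manifold $\mathsf{T}\mathsf{Hom}_{\hbox{\tiny $\mathrm{S}$}}(\Gamma, \mathsf{SO}^0(2,1))$, exploiting the identification already indicated in the paragraph preceding the lemma.

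The first step is to make this identification precise. Any $\rho \in \mathsf{Hom}(\Gamma, \mathsf{G})$ decomposes uniquely as the pair $(\mathtt{L}_\rho, \mathtt{u}_\rho)$, where $\mathtt{u}_\rho$ satisfies the cocycle identity over $\mathtt{L}_\rho$ with values in $\mathbb{R}^3$. When $\rho \in \mathsf{Hom}_{\hbox{\tiny $\mathrm{M}$}}(\Gamma, \mathsf{G})$, Drumm's result \cite{D2} ensures that $\mathtt{L}_\rho(\Gamma)$ acts on $\mathbb{H}$ as a Schottky group, hence $\mathtt{L}_\rho \in \mathsf{Hom}_{\hbox{\tiny $\mathrm{S}$}}(\Gamma, \mathsf{SO}^0(2,1))$, and the cocycle $\mathtt{u}_\rho$ becomes a vector in
\[
\mathsf{T}_{\mathtt{L}_\rho}\mathsf{Hom}_{\hbox{\tiny $\mathrm{S}$}}(\Gamma, \mathsf{SO}^0(2,1)) = \mathsf{Z}^1(\mathtt{L}_\rho(\Gamma), \mathbb{R}^3).
\]
Reading $\rho$ in terms of coordinates on a free generating set of $\Gamma$ shows that the resulting map
\[
\mathsf{Hom}_{\hbox{\tiny $\mathrm{M}$}}(\Gamma, \mathsf{G}) \hookrightarrow \mathsf{T}\mathsf{Hom}_{\hbox{\tiny $\mathrm{S}$}}(\Gamma, \mathsf{SO}^0(2,1))
\]
is a homeomorphism onto its image, since in these coordinates both sides reduce to $\mathsf{SO}^0(2,1)^n \times (\mathbb{R}^3)^n$.

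Since the tangent bundle of an analytic manifold is itself analytic, it remains only to show that this image is open. This is precisely the content of the Goldman--Labourie--Margulis openness theorem \cite{labourie invariant} recalled in the introduction: the set of homomorphisms giving rise to a Margulis spacetime (without cusps) is open inside the representation variety. Pulling this openness back through the quotient by $\mathsf{G}$-conjugation and through the continuous injection above yields the desired openness, equipping $\mathsf{Hom}_{\hbox{\tiny $\mathrm{M}$}}(\Gamma, \mathsf{G})$ with an analytic manifold structure.

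The main obstacle is verifying the compatibility between the two descriptions — the natural coordinates on $\mathsf{Hom}(\Gamma, \mathsf{G})$ inherited from a free generating set, and the tangent-bundle coordinates $(\mathtt{L}_\rho, \mathtt{u}_\rho)$ — so that the Goldman--Labourie--Margulis openness statement can be transferred across the identification. Everything else is formal: once openness is established, the conclusion follows because an open subset of an analytic manifold is an analytic manifold.
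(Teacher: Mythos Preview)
Your proposal is correct and follows essentially the same route as the paper: identify $\mathsf{Hom}_{\hbox{\tiny $\mathrm{M}$}}(\Gamma,\mathsf{G})$ with a subset of the analytic manifold $\mathsf{T}\mathsf{Hom}_{\hbox{\tiny $\mathrm{S}$}}(\Gamma,\mathsf{SO}^0(2,1))$ and then invoke Goldman--Labourie--Margulis for openness. The only cosmetic difference is that the paper cites the fiberwise statement from \cite{labourie invariant} (the proper cocycles over a fixed $\varrho$ form an open convex cone in $\mathsf{T}_\varrho\mathsf{Hom}_{\hbox{\tiny $\mathrm{S}$}}$), whereas you cite the global openness of $\mathcal{M}$ in the representation variety and pull it back through the quotient map; both come from the same source and yield the same conclusion.
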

\begin{proof}
We know that the space $\mathsf{Hom}_{\hbox{\tiny $\mathrm{S}$}}(\Gamma, \mathsf{SO}^0(2,1))$ is an analytic manifold. Hence the tangent bundle $\mathsf{T}\mathsf{Hom}_{\hbox{\tiny $\mathrm{S}$}}(\Gamma, \mathsf{SO}^0(2,1))$ is also an analytic manifold. Now from page number 1053 of \cite{labourie invariant} we get that the set of all $\rho$ in $\mathsf{Hom}_{\hbox{\tiny $\mathrm{M}$}}(\Gamma, \mathsf{G})$ with fixed linear part $\varrho$ is an open convex cone in $\mathsf{T}_{\varrho}\mathsf{Hom}_{\hbox{\tiny $\mathrm{S}$}}(\Gamma, \mathsf{SO}^0(2,1))$. Therefore, we conclude that $\mathsf{Hom}_{\hbox{\tiny $\mathrm{M}$}}(\Gamma, \mathsf{G})$ is an analytic manifold.
\end{proof}

Let $\rho:\Gamma\rightarrow\mathsf{G}$ be a homomomorphism such that the action of $\mathtt{L}_\rho(\Gamma)$ on $\mathbb{H}$ is Schottky. We define the $\textit{Margulis invariant}$ of an element $\gamma$ in $\Gamma$ for a given homomorphism $\rho$ as follows: if $\gamma= e$ then 
\begin{align*}
\alpha_\rho(e)\defeq0,
\end{align*}
otherwise
\begin{align*}
\alpha_\rho(\gamma) \defeq \left\langle \mathtt{u}_\rho(\gamma) \mid {\nu}_\rho\left(\gamma^-, \gamma^+\right)\right\rangle
\end{align*}
where $\mathtt{u}_\rho(\gamma)\defeq\mathtt{u}(\rho(\gamma))$ and ${\nu}_\rho\left(\gamma^-, \gamma^+\right)\defeq{\nu}\left((\mathtt{L}_\rho(\gamma))^-, (\mathtt{L}_\rho(\gamma))^+\right)$. We note that for any $\gamma$ in $\Gamma$, upto scaling ${\nu}_\rho\left(\gamma^-, \gamma^+\right)$ is the unique eigen vector of $\mathtt{L}_\rho(\gamma)$ with eigenvalue 1. Moreover, for any $\gamma$ in $\Gamma$ the element $\rho(\gamma)$ fixes a unique affine line ${l}_{\rho(\gamma)}$ in $\mathbb{A}$ and ${l}_{\rho(\gamma)}$ is parallel to the line generated by ${\nu}_\rho\left(\gamma^-, \gamma^+\right)$. Now if we consider the image of ${l}_{\rho(\gamma)}$ in $\mathsf{M}_\rho$ then it is a closed loop and its length is $\alpha_\rho(\gamma)$. (For more details see \cite{ha}, \cite{D2}, \cite{marg2}).\\

In \cite{marg1} and \cite{marg2} Margulis had shown the following result,
\begin{lemma}{[Opposite sign lemma]} 
If $\rho:\Gamma\rightarrow\mathsf{G}$ is a homomomorphism giving rise to a Margulis spacetime, then
\begin{enumerate}
\item either $\alpha_\rho(\gamma)>0$ for all $\gamma\in\Gamma$,
\item or $\alpha_\rho(\gamma)<0$ for all $\gamma\in\Gamma$.
\end{enumerate}
\end{lemma}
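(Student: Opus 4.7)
The plan is to realise $\alpha_\rho(\gamma)$, for every $\gamma\neq e$, as the integral $\int_0^{\ell(\gamma)}f_\rho(\tilde{\phi}_s g)\,ds$ of the H\"older function $f_\rho$ of Theorem \ref{N} along the periodic orbit of $\gamma$ in $\mathsf{U}^\rho_{\hbox{\tiny $\mathrm{rec}$}}\mathbb{H}$. Since $f_\rho$ is continuous and has constant sign on the compact space $\mathsf{U}^\rho_{\hbox{\tiny $\mathrm{rec}$}}\mathbb{H}$, the dichotomy of the lemma will then be immediate from this identity.

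Fix $\gamma\in\Gamma\setminus\{e\}$. Because $\mathtt{L}_\rho(\Gamma)$ contains no parabolic elements, $\mathtt{L}_\rho(\gamma)$ is hyperbolic with attracting and repelling fixed points $\gamma^\pm\in\partial_\infty\mathbb{H}$ and translation length $\ell(\gamma)>0$. Choose $g\in\mathsf{U}^\rho_{\hbox{\tiny $\mathrm{rec}$}}\mathbb{H}$ on the axis of $\mathtt{L}_\rho(\gamma)$ oriented from $\gamma^-$ to $\gamma^+$, so that
\[
\tilde{\phi}_{\ell(\gamma)}(g)=\mathtt{L}_\rho(\gamma)\cdot g\quad\text{and}\quad\nu(g)=\nu_\rho(\gamma^-,\gamma^+).
\]
Equating the two expressions for $N_\rho\bigl(\mathtt{L}_\rho(\gamma)\cdot g\bigr)=N_\rho\bigl(\tilde{\phi}_{\ell(\gamma)}(g)\bigr)$ coming from properties (2) and (3) of Theorem \ref{N}, and unfolding $\rho(\gamma)N_\rho(g)=\mathtt{L}_\rho(\gamma)N_\rho(g)+\mathtt{u}_\rho(\gamma)$, one obtains
\[
\mathtt{L}_\rho(\gamma)N_\rho(g)+\mathtt{u}_\rho(\gamma)=N_\rho(g)+\Bigl(\int_0^{\ell(\gamma)}f_\rho(\tilde{\phi}_s g)\,ds\Bigr)\nu(g).
\]

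Now take the Minkowski inner product of both sides with $\nu(g)$. Since $\mathtt{L}_\rho(\gamma)\in\mathsf{SO}^0(2,1)$ preserves $\langle\cdot\mid\cdot\rangle$ and fixes $\nu(g)$, one has $\langle\mathtt{L}_\rho(\gamma)N_\rho(g)\mid\nu(g)\rangle=\langle N_\rho(g)\mid\nu(g)\rangle$, so the $N_\rho(g)$ terms cancel; using $\langle\nu(g)\mid\nu(g)\rangle=1$ this collapses to the key identity
\[
\alpha_\rho(\gamma)=\int_0^{\ell(\gamma)}f_\rho(\tilde{\phi}_s g)\,ds,
\]
whose right-hand side shares the common sign of $f_\rho$ for every $\gamma\neq e$. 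The only delicate piece of book-keeping is to choose $g$ on the correctly oriented axis of $\mathtt{L}_\rho(\gamma)$ so that the two identifications $\tilde{\phi}_{\ell(\gamma)}(g)=\mathtt{L}_\rho(\gamma)g$ and $\nu(g)=\nu_\rho(\gamma^-,\gamma^+)$ hold simultaneously; beyond this I expect no serious obstacle, since this one-line Minkowski manipulation is precisely the purpose for which the neutralised section was introduced.
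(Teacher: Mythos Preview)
The paper does not prove this lemma; it is stated as background and attributed to Margulis \cite{marg1}, \cite{marg2}. Your argument is correct but takes a very different route from Margulis's original one: you deduce the result from Theorem \ref{N} (Goldman--Labourie--Margulis, 2009), which is a much later and deeper theorem. Margulis's 1983 proof is direct and elementary by comparison, using a ping-pong type construction to show that if $\alpha_\rho(\gamma_1)>0$ and $\alpha_\rho(\gamma_2)<0$ for some $\gamma_1,\gamma_2$, then suitable products $\gamma_1^m\gamma_2^n$ have a common fixed point in $\mathbb{A}$, contradicting freeness. What your approach buys is brevity once Theorem \ref{N} is in hand; what it costs is reliance on the full GLM machinery.

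Two points to tidy up. First, Theorem \ref{N} as stated in this paper asserts that $f_\rho$ is \emph{positive}, so your argument as written yields only case (1). To recover the full dichotomy you should either note that what GLM actually prove is that $f_\rho$ can be taken of constant (not necessarily positive) sign, or else observe the symmetry: if $\rho=(\varrho,\mathtt{u})$ gives a Margulis spacetime then so does $(\varrho,-\mathtt{u})$, with $\alpha_{(\varrho,-\mathtt{u})}=-\alpha_\rho$. Second, you should verify there is no circularity: the constant sign of $f_\rho$ in Theorem \ref{N} is obtained in \cite{labourie invariant} from the properness of $\rho(\Gamma)$ via a measure-theoretic argument that does not invoke the Opposite sign lemma, so your deduction is logically sound.
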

In \cite{labourie invariant}, Goldman--Labourie--Margulis had generalised the previous result and proved the following:
\begin{theorem}{[Goldman--Labourie--Margulis]}\label{glm}
Let $\rho:\Gamma\rightarrow\mathsf{G}$ be a homomorphism with linear part $\varrho_0$ and translation part $u$. Let $\rho$ be such that the action of $\varrho_0(\Gamma)$ on $\mathbb{H}$ is Schottky. Also let $\mathcal{C}_{\hbox{\tiny B}}(\Sigma_{\varrho_0})$ be the space of $\phi$-invariant Borel probability measures on $\mathsf{U}\Sigma_{\varrho_0}$ and $\mathcal{C}_{\hbox{\tiny per}}(\Sigma_{\varrho_0})\subset\mathcal{C}_{\hbox{\tiny B}}(\Sigma_{\varrho_0})$ be the subspace consisting of measures supported on periodic orbits. Then the following holds:
\begin{enumerate}
\item The map
\begin{align*}
\mathcal{C}_{\hbox{\tiny per}}(\Sigma_{\varrho_0}) &\longrightarrow \mathbb{R}\\
\mu_\gamma &\longmapsto \frac{\alpha_\rho(\gamma)}{\ell_{\varrho_0}(\gamma)},
\end{align*}
where $\ell_{\varrho_0}(\gamma)$ is the length of the corresponding closed geodesic of $\Sigma_{\varrho_0}$, extends to a continuous map
\begin{align*}
\mathcal{C}_{\hbox{\tiny B}}(\Sigma_{\varrho_0}) &\longrightarrow \mathbb{R}\\
\mu&\longmapsto \Upsilon_\rho(\mu).
\end{align*}
\item Moreover, the representation $\rho$ acts properly on $\mathbb{A}$ if and only if $\Upsilon_\rho(\mu) \neq 0$ for all $\mu\in\mathcal{C}_{\hbox{\tiny B}}(\Sigma_{\varrho_0})$.
\end{enumerate}
\end{theorem}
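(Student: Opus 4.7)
The plan is to reduce the entire theorem to the neutralised section produced by Theorem \ref{N}, whose defining relation will package the Margulis invariants as Birkhoff integrals of a H\"older cocycle on $\mathsf{U}^\rho_{\hbox{\tiny $\mathrm{rec}$}}\mathbb{H}$.

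First I would note that the function $f_\rho$ from Theorem \ref{N} is $\mathtt{L}_\rho(\Gamma)$-invariant by item (1), so it descends to a H\"older continuous function $\bar f_\rho$ on $\mathsf{U}_{\hbox{\tiny $\mathrm{rec}$}}\Sigma_{\varrho_0}$. For a nontrivial $\gamma\in\Gamma$, choose $g\in\mathsf{U}^\rho_{\hbox{\tiny $\mathrm{rec}$}}\mathbb{H}$ on the axis of $\mathtt{L}_\rho(\gamma)$; then $\tilde\phi_{\ell_{\varrho_0}(\gamma)}g=\mathtt{L}_\rho(\gamma)g$, and combining items (2) and (3) of Theorem \ref{N} yields
\begin{align*}
\rho(\gamma)N_\rho(g)\;=\;N_\rho(g)+\Bigl(\int_0^{\ell_{\varrho_0}(\gamma)}f_\rho(\tilde\phi_s g)\,ds\Bigr)\nu(g).
\end{align*}
Expanding $\rho(\gamma)N_\rho(g)=\mathtt{L}_\rho(\gamma)N_\rho(g)+\mathtt{u}_\rho(\gamma)$ and pairing with $\nu(g)=\nu_\rho(\gamma^-,\gamma^+)$, using that $\mathtt{L}_\rho(\gamma)\in\mathsf{SO}(2,1)$ preserves $\langle\,\cdot\mid\cdot\,\rangle$ and fixes $\nu(g)$, together with $\langle\nu(g)\mid\nu(g)\rangle=1$, the $\langle N_\rho(g)\mid\nu(g)\rangle$ terms cancel and I recover the key identity
\begin{align*}
\alpha_\rho(\gamma)\;=\;\int_0^{\ell_{\varrho_0}(\gamma)}f_\rho(\tilde\phi_s g)\,ds.
\end{align*}
Dividing by the period identifies $\alpha_\rho(\gamma)/\ell_{\varrho_0}(\gamma)$ with $\int \bar f_\rho\,d\mu_\gamma$, so defining
\begin{align*}
\Upsilon_\rho(\mu)\defeq\int_{\mathsf{U}_{\hbox{\tiny $\mathrm{rec}$}}\Sigma_{\varrho_0}}\bar f_\rho\,d\mu
\end{align*}
gives a weak-$\ast$ continuous extension agreeing with the prescribed map on $\mathcal{C}_{\hbox{\tiny per}}(\Sigma_{\varrho_0})$, proving part (1).

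For part (2), the implication $\Upsilon_\rho\neq 0$ everywhere $\Rightarrow \rho$ acts properly is essentially formal: weak-$\ast$ compactness of $\mathcal{C}_{\hbox{\tiny B}}(\Sigma_{\varrho_0})$ and continuity of $\Upsilon_\rho$ yield a uniform bound $|\Upsilon_\rho|\geq c>0$ of constant sign, hence $|\alpha_\rho(\gamma)|\geq c\cdot\ell_{\varrho_0}(\gamma)$ on all periodic classes; a Margulis-style displacement estimate bounding $\|\rho(\gamma)x-x\|$ below in terms of $|\alpha_\rho(\gamma)|$ uniformly on compacta of $\mathbb{A}$ then converts this into properness of the $\rho(\Gamma)$-action.

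The hard direction, which I expect to be the main obstacle, is the converse: $\rho$ proper $\Rightarrow\Upsilon_\rho(\mu)\neq 0$ for every invariant probability $\mu$. The opposite-sign lemma fixes the sign of $\Upsilon_\rho$ on the dense subset $\mathcal{C}_{\hbox{\tiny per}}(\Sigma_{\varrho_0})$, and continuity upgrades this to $\Upsilon_\rho\geq 0$ (say) on all of $\mathcal{C}_{\hbox{\tiny B}}(\Sigma_{\varrho_0})$; what must be ruled out is equality. I would argue by contradiction: if some ergodic $\mu$ satisfies $\Upsilon_\rho(\mu)=0$, then Birkhoff's theorem produces long segments of $\phi$-orbits on which the time average of $\bar f_\rho$ is arbitrarily small. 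Invoking the metric Anosov / closing property of $\phi$ on $\mathsf{U}_{\hbox{\tiny $\mathrm{rec}$}}\Sigma_{\varrho_0}$, each such segment can be shadowed by a genuine periodic orbit of some $\gamma_n\in\Gamma$ with $\ell_{\varrho_0}(\gamma_n)\to\infty$ and $\alpha_\rho(\gamma_n)/\ell_{\varrho_0}(\gamma_n)\to 0$, whereupon the same displacement estimate as in the preceding paragraph produces translates $\rho(\gamma_n)x$ remaining in a fixed compact set of $\mathbb{A}$, contradicting properness. The delicate point is the shadowing step, where one must control the error both in the base dynamics and in the translation part; this relies on the H\"older regularity of $f_\rho$ together with the exponential contraction along stable/unstable directions supplied by the metric Anosov structure.
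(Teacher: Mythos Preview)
The paper does not prove Theorem~\ref{glm}: it is stated as background and attributed to \cite{labourie invariant}. So there is no ``paper's own proof'' to compare against; nonetheless your sketch has a structural issue worth flagging.

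Your argument for part~(1) invokes Theorem~\ref{N} to produce $f_\rho$ and $N_\rho$. But as stated in this paper, Theorem~\ref{N} carries the hypothesis that $\rho$ \emph{gives rise to a Margulis spacetime}, i.e.\ that $\rho(\Gamma)$ already acts properly on $\mathbb{A}$; moreover its conclusion asserts that $f_\rho$ is \emph{positive}. Theorem~\ref{glm}, by contrast, is formulated for an arbitrary $\rho$ whose linear part is Schottky, with no properness assumption, and part~(2) is precisely meant to \emph{characterise} properness. So you are assuming in part~(1) exactly what part~(2) is trying to detect, and your definition of $\Upsilon_\rho$ is, as written, only available in the proper case---which makes the equivalence in part~(2) collapse. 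In the original Goldman--Labourie--Margulis argument the logical order is the reverse of yours: one first constructs a H\"older function $f_\rho$ on $\mathsf{U}_{\hbox{\tiny $\mathrm{rec}$}}\Sigma_{\varrho_0}$ with $\int_\gamma f_\rho=\alpha_\rho(\gamma)$ for \emph{every} cocycle $u$ (this needs only a H\"older equivariant section of the flat affine bundle, not properness, and $f_\rho$ need not have a sign), then proves the properness criterion, and only afterwards obtains the positive $f_\rho$ and the neutralised section of Theorem~\ref{N} in the proper case. Your computation recovering $\alpha_\rho(\gamma)$ as a period integral is correct and is the right mechanism, but you should source $f_\rho$ from the more primitive construction rather than from Theorem~\ref{N}.

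For part~(2), your outline of both directions is along the correct lines (compactness plus a uniform lower bound for one direction, Anosov closing against a hypothetical $\Upsilon_\rho(\mu)=0$ for the other), but the phrase ``Margulis-style displacement estimate'' is doing a lot of work. The actual inequality one needs---a uniform comparison between $|\alpha_\rho(\gamma)|$ and the affine displacement of $\rho(\gamma)$ on compacta---is the technical core of \cite{labourie invariant}, and it does not drop out of what you have written; likewise, controlling the translation part under shadowing requires more than invoking H\"older regularity of $f_\rho$. These are not fatal, but they are the genuine content of the theorem, and your sketch treats them as black boxes.
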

We note that the generalization of the normalized Margulis invariant as stated above was given by Labourie in \cite{l}. 

Moreover, in \cite{vari} (see also \cite{minko}) Goldman--Margulis showed:
\begin{theorem}{[Goldman--Margulis]}\label{gm}
Let $\{\varrho_t\}\subset\mathsf{Hom}_{\hbox{\tiny $\mathrm{S}$}}(\Gamma, \mathsf{SO}^0(2,1))$ be a smooth path. Also, let $\rho \in \mathsf{Hom}_{\hbox{\tiny $\mathrm{M}$}}(\Gamma, \mathsf{G})$ be such that whose linear part is $\varrho_0$ and translation part is $\dot{\varrho}_0$. Then for all $\gamma\in\Gamma$ we have
\[\left.\frac{d}{dt}\right|_{t=0}\ell_{\varrho_t}(\gamma)=\alpha_\rho(\gamma)\]
where $\ell_{\varrho_t}(\gamma)$ is the length of the closed geodesic of $\Sigma_{\varrho_t}$ corresponding to $\varrho_t(\gamma)\in\varrho_t(\Gamma)$ and $\dot{\varrho}_0\defeq\left.\frac{d}{dt}\right|_{t=0}\varrho_t$.
\end{theorem}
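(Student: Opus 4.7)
The plan is to reduce the statement to a direct Lie-theoretic computation in $\mathsf{SO}^0(2,1)$ via the trace formula for translation length. Since any hyperbolic $g\in\mathsf{SO}^0(2,1)$ has eigenvalues $e^{\ell(g)},1,e^{-\ell(g)}$ on $\mathbb{R}^{2,1}$, we have $\mathrm{tr}(g)=1+2\cosh(\ell(g))$. Fix $\gamma\in\Gamma$, set $g_t\defeq\varrho_t(\gamma)$ and $\ell_t\defeq\ell_{\varrho_t}(\gamma)$, and differentiate this identity at $t=0$ to obtain
\[
\mathrm{tr}(\dot g_0)=2\sinh(\ell_0)\left.\frac{d}{dt}\right|_{t=0}\ell_t.
\]
Since $\ell_0>0$, it suffices to prove $\mathrm{tr}(\dot g_0)=2\sinh(\ell_0)\,\alpha_\rho(\gamma)$.

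First I would rephrase $\dot\varrho_0$ as the translation cocycle $\mathtt{u}_\rho$. The map $\gamma\mapsto\dot\varrho_0(\gamma)\varrho_0(\gamma)^{-1}$ is a $1$-cocycle with values in $\mathfrak{so}(2,1)$, and the standard isomorphism $\mathfrak{so}(2,1)\cong\mathbb{R}^3$ furnished by the Lorentzian cross product $X_u(\cdot)\defeq u\times\cdot$ is precisely the identification under which $\mathsf{Hom}_{\hbox{\tiny $\mathrm{M}$}}(\Gamma,\mathsf{G})$ embeds into $\mathsf{T}\mathsf{Hom}_{\hbox{\tiny $\mathrm{S}$}}(\Gamma,\mathsf{SO}^0(2,1))$. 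Consequently $\dot g_0=X_{\mathtt{u}_\rho(\gamma)}\,g_0$, and the task reduces to computing $\mathrm{tr}(X_{\mathtt{u}_\rho(\gamma)}g_0)$.

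For the computation I would pick an eigenbasis adapted to $g_0$: isotropic vectors $v^\pm$ with $g_0v^\pm=e^{\pm\ell_0}v^\pm$ and $\langle v^+\mid v^-\rangle=1$, together with $\nu_0\defeq{\nu}_\rho(\gamma^-,\gamma^+)$, the triple $(v^+,v^-,\nu_0)$ being positively oriented. Expanding $\mathtt{u}_\rho(\gamma)=av^++b\nu_0+cv^-$, the coefficient $b$ equals $\langle\mathtt{u}_\rho(\gamma)\mid\nu_0\rangle=\alpha_\rho(\gamma)$. The defining relation $\langle u\times v\mid w\rangle=\det(u,v,w)$ gives the cross-product table $\nu_0\times v^\pm=\pm v^\pm$ and $v^+\times v^-=\nu_0$, from which one reads off the matrix of $X_{\mathtt{u}_\rho(\gamma)}$ in the basis $(v^+,\nu_0,v^-)$, multiplies by the diagonal matrix $g_0=\mathrm{diag}(e^{\ell_0},1,e^{-\ell_0})$, and extracts the diagonal to obtain $\mathrm{tr}(X_{\mathtt{u}_\rho(\gamma)}g_0)=be^{\ell_0}-be^{-\ell_0}=2\sinh(\ell_0)\,\alpha_\rho(\gamma)$. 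Combined with the first display, this finishes the proof.

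The only genuine obstacle is sign bookkeeping: both the isomorphism $\mathfrak{so}(2,1)\cong\mathbb{R}^3$ and the identification of the tangent vector $\dot\varrho_0(\gamma)$ with $\mathtt{u}_\rho(\gamma)$ carry orientation conventions, and these must be aligned with those of Section~2.2 so that the final sign is $+\alpha_\rho(\gamma)$ rather than its negative. Once the conventions are fixed, every remaining step is routine linear algebra.
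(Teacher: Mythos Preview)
The paper does not prove this theorem: it is stated as a result of Goldman--Margulis and attributed to \cite{vari} (see also \cite{minko}), with no argument supplied in the present paper. So there is nothing in the paper to compare your proposal against.

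That said, your argument is essentially correct and is indeed the standard route to this identity. Differentiating the trace relation $\mathrm{tr}(g_t)=1+2\cosh(\ell_t)$, rewriting $\dot g_0=X_{\mathtt{u}_\rho(\gamma)}g_0$ via the identification $\mathfrak{so}(2,1)\cong\mathbb{R}^{2,1}$, and then computing $\mathrm{tr}(X_u g_0)$ in an eigenbasis $(v^+,\nu_0,v^-)$ is exactly how the original Goldman--Margulis proof proceeds. Your cross-product table and the resulting trace $b(e^{\ell_0}-e^{-\ell_0})=2\sinh(\ell_0)\,\alpha_\rho(\gamma)$ check out. The caveat you flag is the only delicate point: the isomorphism $u\mapsto X_u$ and the convention by which $\dot\varrho_0$ is read as the translation cocycle $\mathtt{u}_\rho$ each carry a sign, and these must be chosen compatibly with the orientation convention implicit in the paper's definition of $\nu$ so that the outcome is $+\alpha_\rho(\gamma)$ rather than $-\alpha_\rho(\gamma)$. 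Once that bookkeeping is pinned down, the proof is complete.
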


\subsection{Gromov geodesic flow}

Let $\Gamma$ be a non-abelian free group with finitely many generators. Let us denote the $\textit{Gromov}$ $\textit{boundary}$ of $\Gamma$ by $\partial_\infty\Gamma$ and let 
\begin{align*}
\partial_\infty\Gamma^{(2)} \defeq \partial_\infty\Gamma\times\partial_\infty\Gamma\setminus\{(x,x) \mid x\in \partial_\infty\Gamma\}.
\end{align*}
We note that there exists a natural action of $\Gamma$ on $\partial_\infty\Gamma$. The natural action of $\Gamma$ on $\partial_\infty\Gamma$ extends to a diagonal action of $\Gamma$ on $\partial_\infty\Gamma^{(2)}$. Now let
\[\widetilde{\mathsf{U}_0\Gamma} \defeq \partial_\infty\Gamma^{(2)}\times\mathbb{R}.\]
We note that $\mathbb{R}$ acts on $\widetilde{\mathsf{U}_0\Gamma}$ as translation on the last component. We denote this $\mathbb{R}$ action by $\tilde{\psi}_t$, that is,
\[\tilde{\psi}_t(x,y,s)\defeq(x,y,s+t)\]
where $x,y\in\partial_\infty\Gamma$ and $s,t\in\mathbb{R}$.
Now using Gromov's results from \cite{gromov} we get that there exists a proper cocompact action of $\Gamma$ on $\widetilde{\mathsf{U}_0\Gamma}$ which commutes with the the flow $\{\tilde{\psi}_t\}_{t\in\mathbb{R}}$ and the restriction of this action on $\partial_\infty\Gamma^{(2)}$ is the diagonal action. Moreover, from \cite{gromov} we also get that there exists a metric on $\widetilde{\mathsf{U}_0\Gamma}$ well defined up to H\"older equivalence such that the following holds:
\begin{enumerate}
\item the $\Gamma$ action is isometric,
\item every orbit of the flow $\{\tilde{\psi}_t\}_{t\in\mathbb{R}}$ gives a quasi-isometric embedding,
\item the flow $\tilde{\psi_t}$ acts by Lipschitz homeomorphisms.
\end{enumerate} 
The flow $\tilde{\psi_t}$ on $\widetilde{\mathsf{U}_0\Gamma}$ gives rise to a flow $\psi_t$ on the quotient 
\[\mathsf{U}_0\Gamma \defeq \Gamma\backslash\left(\partial_\infty\Gamma^{(2)} \times \mathbb{R}\right).\] 
We call it the $\textit{Gromov}$ $\textit{geodesic}$ $\textit{flow}$. We denote the projection onto the first coordinate of $\widetilde{\mathsf{U}_0\Gamma}$ by $\pi_1$ and the projection onto the second coordinate of $\widetilde{\mathsf{U}_0\Gamma}$ by $\pi_2$. More details about this construction can be found in Champetier \cite{champ} and Mineyev \cite{min}.

\section{Anosov representations}

In this section we will define the notion of an Anosov representation in the context of the non-semisimple Lie group $\mathsf{G} \defeq \mathsf{SO}^0(2,1)\ltimes\mathbb{R}^3$. The notion of an Anosov representation of a discrete group in a transformation group $G$ was first introduced by Labourie in \cite{orilab}. Later, Guichard--Wienhard studied Anosov representations into semisimple Lie groups in more details in \cite{guiwien}.
Recently in \cite{pressure metric}  Bridgeman--Canary--Labourie--Sambarino introduced the geodesic flow of an Anosov representation and the thermodynamical formalism in this picture, again in the context of $G$ being a semisimple group. In \cite{me}, I had studied special cases and new examples of Anosov representations when $G$ is non-semisimple. The definition given here is equivalent to the definition appearing in \cite{me}.

Let us denote the space of all affine null planes by $\mathbb{X}$ where an affine null plane is an affine plane parallel to any tangent plane of the light cone. We observe that the action of $\mathsf{G}$ on $\mathbb{X}$ is transitive. Hence for all $P\in\mathbb{X}$ we have
\[\mathbb{X} \cong \mathsf{G}/\mathsf{Stab}_{\mathsf{G}}(P).\]

\begin{definition}{[Pseudo-parabolic group]}\label{levi}
For any $P\in\mathbb{X}$ we call $\mathsf{Stab}_{\mathsf{G}}(P)$ a $\textit{pseudo-parabolic}$ subgroup of $\mathsf{G}$.
\end{definition}
Let us denote the vector space underlying a null plane $P$ by $\mathsf{V}(P)$. We recall the following proposition from subsection 4.1 of \cite{me}
\begin{proposition}\label{open}
The unique open $\mathsf{G}$ orbit for the diagonal action of $\mathsf{G}$ on the space $\mathbb{X}\times\mathbb{X}$ is, 
\[\mathcal{N}\defeq\{(P_1,P_2)\mid P_1,P_2\in\mathbb{X}, \mathsf{V}(P_1)\neq \mathsf{V}(P_2)\}.\] 
\end{proposition}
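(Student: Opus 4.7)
The plan is to prove three things in order: that $\mathcal{N}$ is both open and $\mathsf{G}$-invariant, that $\mathsf{G}$ acts transitively on $\mathcal{N}$, and that any open $\mathsf{G}$-orbit must coincide with $\mathcal{N}$. Openness and $\mathsf{G}$-invariance are routine: the map $P\mapsto \mathsf{V}(P)$ sending an affine null plane to its underlying linear plane is continuous with values in the one-dimensional space of null tangent $2$-planes (parametrised by $\partial_\infty\mathbb{H}$), so $\{\mathsf{V}(P_1)\neq \mathsf{V}(P_2)\}$ is open. Moreover, for $g=(\ell,u)\in\mathsf{G}$ we have $\mathsf{V}(g\cdot P)=\ell\cdot\mathsf{V}(P)$, and since $\ell$ is invertible the condition is preserved under the diagonal action.

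The central step is transitivity on $\mathcal{N}$. Given two pairs $(P_1,P_2)$ and $(Q_1,Q_2)$ in $\mathcal{N}$, I would first invoke the classical $2$-transitivity of $\mathsf{SO}^0(2,1)$ on $\partial_\infty\mathbb{H}$, equivalently on ordered pairs of distinct null tangent $2$-planes, to obtain $\ell\in\mathsf{SO}^0(2,1)$ with $\ell\cdot \mathsf{V}(P_i)=\mathsf{V}(Q_i)$ for $i=1,2$. Acting by the linear element $\ell\in\mathsf{G}$ reduces matters to the case $\mathsf{V}(P_i)=\mathsf{V}(Q_i)$, and it then suffices to find a translation $u\in\mathbb{R}^3$ with $u+P_i=Q_i$ for both $i$. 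This amounts to prescribing simultaneously the images of $u$ in the two quotients $\mathbb{R}^3/\mathsf{V}(P_1)$ and $\mathbb{R}^3/\mathsf{V}(P_2)$. The key linear-algebraic observation is that two distinct $2$-dimensional subspaces of $\mathbb{R}^3$ always satisfy $\mathsf{V}(P_1)+\mathsf{V}(P_2)=\mathbb{R}^3$ (their intersection being exactly $1$-dimensional), so the natural map
\[
\mathbb{R}^3\longrightarrow \mathbb{R}^3/\mathsf{V}(P_1)\;\oplus\;\mathbb{R}^3/\mathsf{V}(P_2)
\]
has $1$-dimensional kernel and surjects onto the $2$-dimensional target. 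The required $u$ therefore exists, and $(\ell,u)\in\mathsf{G}$ sends $(P_1,P_2)$ to $(Q_1,Q_2)$.

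For uniqueness of the open orbit, I would note that the complement of $\mathcal{N}$ in $\mathbb{X}\times\mathbb{X}$, namely the locus $\{\mathsf{V}(P_1)=\mathsf{V}(P_2)\}$, is closed and of strictly smaller dimension, so $\mathcal{N}$ is open and dense. Any open $\mathsf{G}$-orbit $\mathcal{O}$ either equals $\mathcal{N}$ or is disjoint from it; in the latter case $\mathcal{O}$ would sit inside the lower-dimensional closed complement, contradicting openness. The main obstacle in this plan is the linear-algebra surjectivity in the transitivity step, but as indicated it is forced precisely by the defining condition $\mathsf{V}(P_1)\neq\mathsf{V}(P_2)$ of $\mathcal{N}$.
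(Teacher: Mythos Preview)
The paper does not actually prove this proposition; it merely recalls it from subsection~4.1 of \cite{me}. Your argument is correct and complete on its own: openness and $\mathsf{G}$-invariance are clear, transitivity follows exactly as you outline (the $2$-transitivity of $\mathsf{SO}^0(2,1)$ on $\partial_\infty\mathbb{H}$ aligns the underlying linear null planes, and the surjectivity of $\mathbb{R}^3\to\mathbb{R}^3/\mathsf{V}(P_1)\oplus\mathbb{R}^3/\mathsf{V}(P_2)$, which holds precisely because $\mathsf{V}(P_1)+\mathsf{V}(P_2)=\mathbb{R}^3$ when the two $2$-planes are distinct, supplies the required translation), and uniqueness follows from the dimension count showing the complement is a proper closed subset that cannot contain an open orbit.
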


Let $\mathsf{N}$ be the space of oriented space like affine lines. We think of $\mathsf{N}$ as the space $\mathsf{U}\mathbb{A}/\sim$ where $(X,v)\sim(X_1,v_1)$ if and only if $(X_1,v_1)=\tilde{\Phi}_t(X,v)$ for some $t\in\mathbb{R}$. We denote the equivalence class of $(X,v)$ by $[(X,v)]$. We recall from subsection 4.1 of \cite{me} that
\[\mathsf{N}\cong\mathcal{N}.\]
Let $P_{X,w_1,w_2}$ be the plane passing through a point $X$ with the underlying vector space generated by the vectors $w_1$ and $w_2$. Now let us denote the vectors $(1,0,0)^t$, $(0,-1,1)^t$ and $(0,1,1)^t$ respectively by $v_0$, $v_0^-$ and $v_0^+$. We consider the following two subgroups of $\mathsf{G}$ 
\begin{align}\label{pseudo}
\mathsf{P}^\pm \defeq \mathsf{Stab}_\mathsf{G}\left(P_{O,v_0,v_0^\pm}\right)
\end{align} 
and let $\mathsf{L}=\mathsf{P}^+\cap \mathsf{P}^-$. We note that $\mathsf{L}=\mathsf{Stab}_\mathsf{G}([\mathsf{P}^+],[\mathsf{P}^-])$ for the diagonal action of $\mathsf{G}$ on $\mathsf{G}/\mathsf{P}^+\times\mathsf{G}/\mathsf{P}^-$. Moreover, using proposition \ref{open} we get that the $\mathsf{G}$ orbit of the point $([\mathsf{P}^+],[\mathsf{P}^-])\in\mathsf{G}/\mathsf{P}^+\times\mathsf{G}/\mathsf{P}^-$ is the unique open $\mathsf{G}$ orbit in $\mathsf{G}/\mathsf{P}^+\times\mathsf{G}/\mathsf{P}^-$. 

We denote the $\mathsf{G}$ orbit of the point $([\mathsf{P}^+],[\mathsf{P}^-])\in\mathsf{G}/\mathsf{P}^+\times\mathsf{G}/\mathsf{P}^-$ by $\mathsf{G}.([\mathsf{P}^+],[\mathsf{P}^-])$. If we consider the diagonal action of the group $\mathsf{G}$ on the space $\mathsf{G}.([\mathsf{P}^+],[\mathsf{P}^-])$ then the action is transitive and as $\mathsf{L}$ is the stabilizer of the point $([\mathsf{P}^+],[\mathsf{P}^-])$ we can identify
\[\mathsf{G}/\mathsf{L} \cong \mathsf{G}.([\mathsf{P}^+],[\mathsf{P}^-]).\]

Moreover, the pair $\mathsf{G}/\mathsf{P}^\pm$ gives a continuous set of foliations on the space $\mathsf{G}/\mathsf{L}$ whose tangential distributions $\mathsf{E}^\pm$ satisfy
\[\mathsf{T}(\mathsf{G}/\mathsf{L}) = \mathsf{E}^+\oplus\mathsf{E}^-.\]

We denote the Lie algebras associated to the Lie groups $\mathsf{G}, \mathsf{P}^\pm$ and $\mathsf{L}$ respectively by $\mathfrak{g}, \mathfrak{p}^\pm$ and $\mathfrak{l}$. We notice that as 
\[\dim(\mathfrak{p}^+)=\dim(\mathfrak{p}^-)=4\] 
and 
\[\dim(\mathfrak{l})=2\]
we have
\begin{align}\label{add}
\mathfrak{g} = \mathfrak{p}^+ + \mathfrak{p}^- \quad\text{and}\quad
\mathfrak{l} = \mathfrak{p}^+ \cap \mathfrak{p}^-.
\end{align}
If we complexify, we obtain the Lie algebras $\mathfrak{p}^\pm_{\mathbb{C}}$ and $\mathfrak{l}_{\mathbb{C}}$, so that the same equation \ref{add} is satisfied, that is,
\begin{align}\label{add1}
\mathfrak{g}_{\mathbb{C}} = \mathfrak{p}^+_{\mathbb{C}} + \mathfrak{p}^-_{\mathbb{C}} \quad\text{and}\quad \mathfrak{l}_{\mathbb{C}} = \mathfrak{p}^+_{\mathbb{C}} \cap \mathfrak{p}^-_{\mathbb{C}}.
\end{align}
Now as $\mathsf{SO}^0(2,1)$ is a subgroup of $\mathsf{GL}(\mathbb{R}^3)$ we get 
\[\mathsf{G}_\mathbb{C}=\mathsf{SO}(3,\mathbb{C})\ltimes\mathbb{C}^3.\]

We call a complex plane $P$ $\textit{degenerate}$ if and only if there exists a non zero vector $(v_1,v_2,v_3)^t\in P$ such that for all $(v_1^\prime ,v_2^\prime ,v_3^\prime )^t\in P$ we have
\[v_1v_1^\prime+v_2v_2^\prime+v_3v_3^\prime=0.\] Let us denote the space of all complex degenerate planes by $\mathbb{Y}_\mathbb{C}$. The group $\mathsf{SO}(3,\mathbb{C})$ acts transitively on the space $\mathbb{Y}_\mathbb{C}$. Moreover, the action of the group $\mathsf{SO}(3,\mathbb{C})$ is  transitive on the following space:
\[\mathbb{Y}_\mathbb{C}^{(2)}\defeq\{(P_1,P_2)\in\mathbb{Y}_\mathbb{C}\times\mathbb{Y}_\mathbb{C}\mid P_1\neq P_2\}.\]
Now let $\mathbb{X}_\mathbb{C}$ be the space of all affine degenerate planes in $\mathbb{C}^3$. We consider the following open subspace:
\[\mathcal{N}_\mathbb{C}\defeq\{(P_1,P_2)\in\mathbb{X}_\mathbb{C}\times\mathbb{X}_\mathbb{C}\mid \mathsf{V}(P_1)\neq \mathsf{V}(P_2)\}\]
and using the fact that $\mathsf{SO}(3,\mathbb{C})$ acts transitively on the space $\mathbb{Y}^{(2)}_\mathbb{C}$, we deduce that the action of the group $\mathsf{G}_\mathbb{C}=\mathsf{SO}(3,\mathbb{C})\ltimes\mathbb{C}^3$ on the space $\mathcal{N}_\mathbb{C}$ is transitive. Moreover, we fix $(P_1,P_2)\in\mathcal{N}_\mathbb{C}$ and observe that
\[\mathsf{L}_\mathbb{C}\cong\mathsf{Stab}_{\mathsf{G}_\mathbb{C}}(P_1,P_2)\]
where $\mathsf{L}_\mathbb{C}$ denote the complexification of the group $\mathsf{L}$. Hence 
\[\mathsf{G}_{\mathbb{C}}/\mathsf{L}_{\mathbb{C}}\cong\mathcal{N}_\mathbb{C}.\] 
Now using equation \ref{add1} we get that $\mathsf{G}_{\mathbb{C}}/\mathsf{L}_{\mathbb{C}}$ is foliated by two foliations. These foliations are respectively stabilized by $\mathsf{P}^\pm_{\mathbb{C}}$. Let us denote the tangential distributions corresponding to the foliations $\mathsf{G}_{\mathbb{C}}/\mathsf{P}^\pm_{\mathbb{C}}$ respectively by $\mathsf{E}^\pm_{\mathbb{C}}$. We observe that
\[\mathsf{T}(\mathsf{G}_{\mathbb{C}}/\mathsf{L}_{\mathbb{C}}) = \mathsf{E}^+_{\mathbb{C}}\oplus\mathsf{E}^-_{\mathbb{C}}.\]
Now let $\mathsf{E}$ be a vector bundle over a compact topological space. Also equip the total space of the bundle $\mathsf{E}$ with a flow $\{\varphi_t\}_{t\in\mathbb{R}}$ which are bundle automorphisms.
\begin{definition}
We say that the bundle $\mathsf{E}$ is $\textit{contracted}$ by the flow as $t\to\infty$ if for any metric $\|.\|$ on $\mathsf{E}$, there exists $t_0>0$, $A>0$ and $c>0$ such that for all $v\in\mathsf{E}$ and for all $t>t_0$ we have
\begin{align*}
\|\varphi_t(v)\| \leqslant Ae^{-ct}\|v\|.
\end{align*}
\end{definition}
\begin{definition}
We say that $\rho$ in $\mathsf{Hom}(\Gamma, \mathsf{G})$ (respectively $\mathsf{Hom}(\Gamma, \mathsf{G}_{\mathbb{C}}))$  is  partial $(\mathsf{G},\mathsf{P}^\pm)$-Anosov (respectively partial $(\mathsf{G}_{\mathbb{C}},\mathsf{P}^\pm_{\mathbb{C}})$-Anosov) if there exist two continuous maps
\begin{align*}
\xi^\pm_\rho : \partial_\infty\Gamma \longrightarrow \mathsf{G}/\mathsf{P}^\pm \text{ (respectively $\mathsf{G}_{\mathbb{C}}/\mathsf{P}^\pm_{\mathbb{C}}$)}
\end{align*}
such that the following conditions hold:
\begin{enumerate}
\item for all $\gamma\in\Gamma$ we have $\xi^\pm_\rho\circ\gamma = \rho(\gamma).\xi^\pm_\rho$,
\item for all $x, y\in\partial_\infty\Gamma$ if $x\neq y$ then $(\xi^+_\rho(x),\xi^-_\rho(y))\in\mathsf{G}/\mathsf{L}$ (respectively $\mathsf{G}_{\mathbb{C}}/\mathsf{L}_{\mathbb{C}}$),
\item The induced bundle $\Xi_\rho^+\defeq (\xi_\rho^+\circ\pi_1)^*\mathsf{E}^+$ (respectively $(\xi_\rho^+\circ\pi_1)^*\mathsf{E}^+_{\mathbb{C}}$) gets contracted by the lift of the flow $\tilde{\psi}_t$ as $t\to\infty$, and the induced bundle $\Xi_\rho^-\defeq (\xi_\rho^-\circ\pi_2)^*\mathsf{E}^-$ (respectively $(\xi_\rho^-\circ\pi_2)^*\mathsf{E}^-_{\mathbb{C}}$) gets contracted by the lift of the flow $\tilde{\psi}_t$ as $t\to-\infty$.
\end{enumerate}
Moreover, if the following condition holds then we say $\rho$ in $\mathsf{Hom}(\Gamma, \mathsf{G})$ is $(\mathsf{G},\mathsf{P}^\pm)$-Anosov :

\begin{enumerate}
\item[4] Let $\mathcal{B}$ over $\mathsf{U}_0\Gamma$ be the quotient bundle of the bundle
\[\{((x,y,t),z)\mid (x,y,t)\in\partial_\infty\Gamma^{(2)}\times\mathbb{R}, z\in\mathbb{R}^3\}\] over $\partial_\infty\Gamma^{(2)}\times\mathbb{R}$
under the diagonal action of $\Gamma$. There exists a H\"older section $\sigma$ of the bundle $\mathcal{B}$ over $\mathsf{U}_0\Gamma$ such that
\[\langle \nabla_\phi\sigma\mid \nu\circ(\xi_\rho^+,\xi_\rho^-)\rangle > 0\]
where $\nabla_\psi\sigma (x,y,t_0):=\frac{d}{dt}|_{t=0}\sigma(x,y,t_0+t)$.
\end{enumerate}

We call $\xi^\pm_\rho$ the $\textit{limit maps}$ associated with the partial $(\mathsf{G},\mathsf{P}^\pm)$-Anosov (respectively partial $(\mathsf{G}_{\mathbb{C}},\mathsf{P}^\pm_{\mathbb{C}})$-Anosov) representation $\rho$.
\end{definition}
 
\begin{proposition}\label{marano}
If $\rho$ is in $\mathsf{Hom}_{\hbox{\tiny $\mathrm{M}$}}(\Gamma, \mathsf{G})$ then $\rho$ is $(\mathsf{G},\mathsf{P}^\pm)$-Anosov.
\end{proposition}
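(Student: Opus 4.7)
The plan is to construct the limit maps $\xi^\pm_\rho$ explicitly from the Schottky limit maps of $\mathtt{L}_\rho(\Gamma)$ together with the affine data provided by the neutralised section $N_\rho$, and then verify the four conditions in turn, using orbit equivalence (Theorem \ref{commute}) for contraction and the positivity of $f_\rho$ (Theorem \ref{N}) for condition~4.

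Since $\mathtt{L}_\rho(\Gamma)$ is Schottky with no parabolics, standard hyperbolic geometry provides $\Gamma$-equivariant H\"older limit maps $\mathtt{L}_\rho^\pm \colon \partial_\infty\Gamma \to \partial_\infty\mathbb{H}$ sending $\gamma^\pm$ to the attracting/repelling fixed points of $\mathtt{L}_\rho(\gamma)$. Using the identification $\mathsf{G}/\mathsf{P}^\pm \cong \mathbb{X}$, I would define $\xi^+_\rho(x)$ to be the affine null plane whose underlying vector space is the tangent plane to the light cone at the null direction of $\mathtt{L}_\rho^+(x)$, and which passes through $N_\rho(\mathtt{L}_\rho^-(y), \mathtt{L}_\rho^+(x), 0)$ for any choice of $y \neq x$; independence of $y$ and of the chosen real parameter follows from Theorem \ref{N}(3), since varying either only translates the base point along the null direction that lies inside the plane. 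Symmetrically for $\xi^-_\rho$. Conditions (1) and (2) are then immediate: equivariance follows from Theorem \ref{N}(2) together with $\Gamma$-equivariance of $\mathtt{L}_\rho^\pm$, and transversality follows from $\mathtt{L}_\rho^+(x) \neq \mathtt{L}_\rho^-(y)$ for $x \neq y$, which forces the two underlying null planes to differ and places $(\xi^+_\rho(x), \xi^-_\rho(y))$ in the open orbit $\mathcal{N}$ of Proposition \ref{open}.

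For the contraction property~(3) I would appeal to the orbit equivalence of Theorem \ref{commute}: the pulled-back bundles $\Xi^\pm_\rho$ over $\mathsf{U}_0\Gamma$ are H\"older-identified, through $\hat{\mathtt{N}}_\rho$, with the stable/unstable distributions of the hyperbolic geodesic flow on $\mathsf{U}_{\hbox{\tiny $\mathrm{rec}$}}\Sigma_{\mathtt{L}_\rho}$, which contract exponentially. The Gromov flow on $\mathsf{U}_0\Gamma$ differs from the hyperbolic flow only by a H\"older reparametrisation, and such reparametrisations preserve exponential contraction up to adjusting the rate.

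The subtlest step is condition~(4). I would take $\sigma$ to be the section of the flat $\mathbb{R}^3$-bundle $\mathcal{B}$ induced by $N_\rho$ under the identification of a lift of $\widetilde{\mathsf{U}_0\Gamma}$ with $\mathsf{U}^\rho_{\hbox{\tiny $\mathrm{rec}$}}\mathbb{H}$ provided by $(\mathtt{L}_\rho^+, \mathtt{L}_\rho^-)$; $\Gamma$-equivariance of the resulting section is precisely Theorem \ref{N}(2), which is why it descends to $\mathcal{B}$. Theorem \ref{N}(3) then gives $\nabla_{\tilde\phi}\sigma = f_\rho \cdot \nu \circ (\xi_\rho^+, \xi_\rho^-)$, and since $\nu$ has unit Lorentzian norm the pairing equals $f_\rho$, which is strictly positive by Theorem \ref{N}. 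The main obstacle is reconciling the Gromov flow $\psi$ appearing in the definition with the hyperbolic flow $\tilde\phi$ appearing in Theorem \ref{N}, so that the derivative $\nabla_\phi\sigma$ translates correctly into $\nabla_{\tilde\phi}\sigma$ up to a strictly positive H\"older reparametrisation factor; this compatibility, together with the H\"older regularity of the constructed $\sigma$, is the technical heart of the argument and is essentially carried out in \cite{me}.
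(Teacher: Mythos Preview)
Your overall strategy coincides with the paper's: define $\xi^\pm_\rho$ by pushing the neutralised section $\mathtt{N}_\rho=(N_\rho,\nu)$ into $\mathsf{U}\mathbb{A}$ and then reading off the associated pair of affine null planes, and defer the dynamical checks to results in \cite{me}. Two of your justifications, however, are misattributed and as stated leave genuine gaps.

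First, the well-definedness of $\xi^+_\rho(x)$ does \emph{not} follow from Theorem~\ref{N}(3). That item only controls how $N_\rho$ moves under the flow parameter $t$, and the movement is along the spacelike vector $\nu(g)$, not along the null direction as you wrote; this lies in the plane, so independence of $t$ is fine. But varying $y$ with $x$ fixed is a movement along the \emph{stable leaf} of the hyperbolic flow, about which Theorem~\ref{N} says nothing. What is needed is that $N_\rho$ carries stable leaves into the corresponding affine null planes, i.e.\ compatibility of $N_\rho$ with the stable foliation; the paper obtains exactly this from proposition~3.2.6 of \cite{me}, not from Theorem~\ref{N}.

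Second, your argument for condition~(3) misidentifies the bundle. The fibre $\mathsf{T}_{[g\mathsf{P}^+]}(\mathsf{G}/\mathsf{P}^+)\cong\mathbb{R}(0,\mathtt{L}(g)v_0^-)\oplus\mathbb{R}(\mathtt{L}(g)v_0^-,0)$ is two-dimensional: one summand records the linear direction and the other the translational direction. The stable distribution of the \emph{hyperbolic} geodesic flow on $\mathsf{U}_{\hbox{\tiny $\mathrm{rec}$}}\Sigma_{\mathtt{L}_\rho}$ is one-dimensional and accounts only for the first summand. Contraction of the translational summand is additional content---it is the Anosov property of the affine flow on $\mathsf{U}_{\hbox{\tiny $\mathrm{rec}$}}\mathsf{M}_\rho$---and the paper draws it from proposition~3.3.1 of \cite{me}. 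So the orbit equivalence $\hat{\mathtt{N}}_\rho$ does deliver contraction, but via the affine flow on $\mathsf{M}_\rho$, not merely via the hyperbolic geodesic flow as you state.

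Your treatment of condition~(4) is essentially correct, modulo the same reparametrisation bookkeeping you flag. In sum, the route is the paper's route; what needs fixing is that the two substantive inputs you need (stable-leaf compatibility of $N_\rho$, and contraction of the rank-two bundle) both come from \cite{me}, not from Theorems~\ref{N} or~\ref{commute} alone.
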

\begin{proof}
Let $(X,v)\in\mathsf{U}\mathbb{A}$. Let $v^\perp$ be the plane which is perpendicular to the vector $v$ in the Lorentzian metric. We note that $v^\perp\cap\mathcal{C}$ is a disjoint union of two half lines where $\mathcal{C}$ is the upper half of $\mathsf{S}^0\backslash\{0\}$. We choose $v^\pm\in v^\perp\cap\mathcal{C}$ such that $(v^+,v,v^-)$ gives the same orientation as $(v_0^+,v_0,v_0^-)$. Let $P_{X,v,v^\pm}$ respectively be the affine null plane passing through $X$ such that its underlying vector space is generated by $v$ and $v^\pm$. We notice that $P_{X,v,v^+}\neq P_{X,v,v^-}$. Now using proposition \ref{open} we get that there exists $g_{(X,v)}\in\mathsf{G}$ such that 
\[g_{(X,v)}.P_{O,v_0,v_0^+}=P_{X,v,v^+}\]
and
\[g_{(X,v)}.P_{O,v_0,v_0^-}=P_{X,v,v^-}.\] 
Moreover, if $g_1\in\mathsf{G}$ such that
\[g_1.P_{O,v_0,v_0^+}=P_{X,v,v^+}\]
then $g_1^{-1}.g_{(X,v)}$ stabilizes the plane $P_{O,v_0,v_0^+}$. Hence $g_1^{-1}.g_{(X,v)}\in\mathsf{P}^+$. Therefore the following is a well defined map:
\begin{align*}
\eta^+ : \mathsf{U}\mathbb{A} &\longrightarrow \mathsf{G}/\mathsf{P}^+\\
(X,v) &\longmapsto [g_{(X,v)}.\mathsf{P}^+].
\end{align*}
We notice that $\eta^+$ is $\mathsf{G}$-equivariant. Similarly, we define another $\mathsf{G}$-equivariant map
\begin{align*}
\eta^- : \mathsf{U}\mathbb{A} &\longrightarrow \mathsf{G}/\mathsf{P}^-\\
(X,v) &\longmapsto [g_{(X,v)}.\mathsf{P}^-].
\end{align*}
Also for all $(X,v)\in \mathsf{U}\mathbb{A}$ we see that
\[(\eta^+,\eta^-)(X,v)=([g_{(X,v)}.\mathsf{P}^+],[g_{(X,v)}.\mathsf{P}^-])=g_{(X,v)}.([\mathsf{P}^+],[\mathsf{P}^-]).\]
Hence $(\eta^+,\eta^-)(\mathsf{U}\mathbb{A})\subset\mathsf{G}/\mathsf{L}$.

Let $\rho\in\mathsf{Hom}_{\hbox{\tiny $\mathrm{M}$}}(\Gamma, \mathsf{G})$. Hence $\mathtt{L}_\rho\in\mathsf{Hom}_{\hbox{\tiny $\mathrm{S}$}}(\Gamma, \mathsf{SO}^0(2,1))$. Now $\Gamma$ being a free group we get that there exists a $\Gamma$-equivariant homeomorphism 
\[\iota_\rho : \partial_\infty\Gamma \longrightarrow\Lambda_\infty\mathtt{L}_\rho(\Gamma).\] 
We define 
\[\eta_\rho^\pm\defeq\left.\eta^\pm\right|_{\mathsf{U}^\rho_{\hbox{\tiny $\mathrm{rec}$}}\mathbb{A}} \]
and observe that for any $[g.\mathsf{P}^+]\in\mathsf{G}/\mathsf{P}^+$ we have
\begin{align*}
(\eta_\rho^+)^{-1}\left([g.\mathsf{P}^+]\right)= \{g.O + t\mathtt{L}(g)v_0 &+ s_1 \mathtt{L}(g)v_0^+ , \mathtt{L}(g)v_0\\ 
&+ s_2 \mathtt{L}(g)v_0^+)\mid t, s_1, s_2 \in \mathbb{R}\}\cap\mathsf{U}^\rho_{\hbox{\tiny $\mathrm{rec}$}}\mathbb{A}.
\end{align*}
Now using proposition 3.2.6 of \cite{me} we notice that the maps $\eta^\pm_\rho\circ\mathtt{N}_\rho$ gives rise to a pair of $\Gamma$-equivariant continuous maps
\[\zeta^\pm_\rho : \Lambda_\infty\mathtt{L}_\rho(\Gamma) \longrightarrow \mathsf{G}/\mathsf{P}^\pm.\]
Therefore the following map,
\[\xi^\pm_\rho \defeq \zeta^\pm_\rho\circ\iota_\rho: \partial_\infty\Gamma \longrightarrow \mathsf{G}/\mathsf{P}^\pm\] is also continuous and $\Gamma$-equivariant. Moreover, as $(\eta_\rho^+,\eta_\rho^-)(\mathsf{U}^\rho_{\hbox{\tiny $\mathrm{rec}$}}\mathbb{A})\subset\mathsf{G}/\mathsf{L}$ we get that if $x,y\in\partial_\infty\Gamma$ with $x\neq y$ then $(\xi^+_\rho(x),\xi^-_\rho(y))\in\mathsf{G}/\mathsf{L}.$
We also observe that 
\[\mathsf{T}_{[g.\mathsf{P}^\pm]}\mathsf{G}/\mathsf{P}^\pm \cong \mathbb{R}(0,\mathtt{L}(g)v_0^\mp)\oplus\mathbb{R}(\mathtt{L}(g)v_0^\mp,0).\]
Now using proposition 3.3.1 of \cite{me} and and Theorem \ref{N} we conclude that $\rho$ is $(\mathsf{G},\mathsf{P}^\pm)$-Anosov .
\end{proof}

\section{Deformation theory}\label{deform}

\subsection{Analyticity of limit maps}

In this section we will show that the limit maps vary analytically over the analytic manifold $\mathsf{Hom}_{\hbox{\tiny $\mathrm{M}$}}(\Gamma, \mathsf{G})$. The proofs given in this section are similar to some of the proofs given in the section 6 of \cite{pressure metric} the only difference being that in our case the group $\mathsf{G}$ is not semi-simple.

\begin{theorem} \label{lanal}
Let $\{\rho_u\}_{u\in \mathcal{D}}\subset\mathsf{Hom}(\Gamma, \mathsf{G})$ be a real analytic family of injective homomorphisms parameterized by an open disk $\mathcal{D}$ around 0. Also let $\rho_0$ be a partial $(\mathsf{G},\mathsf{P}^\pm)$-Anosov representation (where $\mathsf{G}$ is the non-semisimple Lie group $\mathsf{SO}^0(2,1)\ltimes\mathbb{R}^3$ and $\mathsf{P}^\pm$ are pseudo-parabolic subgroups of $\mathsf{G}$ as mentioned in equation \ref{pseudo})  with limit maps 
\[\xi^\pm_0 : \partial_{\infty}\Gamma \longrightarrow \mathsf{G}/\mathsf{P}^\pm.\] 
Then there exists an open disk $\mathcal{D}_0$ containing 0, $\mathcal{D}_0\subset\mathcal{D}$, a positive real number $\mu$ and a pair of continuous maps
\[\xi^\pm : \mathcal{D}_0\times\partial_{\infty}\Gamma \longrightarrow \mathsf{G}/\mathsf{P}^\pm\] such that the following conditions hold:
\begin{enumerate}
\item for all $x\in\partial_{\infty}\Gamma$ we have $\xi^\pm(0,x)=\xi^\pm_0(x)$,
\item the representation $\rho_u$ is partial $(\mathsf{G},\mathsf{P}^\pm)$-Anosov for all $u\in\mathcal{D}_0$  and the corresponding limit maps denoted by
\begin{align*}
\xi^\pm_u : \partial_{\infty}\Gamma &\longrightarrow \mathsf{G}/\mathsf{P}^\pm\\
x &\longmapsto \xi^\pm(u,x),
\end{align*}
are $\mu$-H\"older continuous,
\item for all $x\in\partial_{\infty}\Gamma$ the following maps are real analytic
\begin{align*}
\xi^\pm_x : \mathcal{D}_0 &\longrightarrow \mathsf{G}/\mathsf{P}^\pm\\
u &\longmapsto \xi^\pm(u, x),
\end{align*}
\item the following maps are $\mu$-H\"older continuous
\begin{align*}
\xi_\dagger^\pm:\partial_{\infty}\Gamma&\longrightarrow\mathcal{C}^\omega (\mathcal{D}_0, \mathsf{G}/\mathsf{P}^\pm)\\
x &\longmapsto \xi^\pm_x
\end{align*} 
where $\mathcal{C}^\omega (\mathcal{D}_0, \mathsf{G}/\mathsf{P}^\pm)$ is the space of all real analytic functions from $\mathcal{D}_0$ to $\mathsf{G}/\mathsf{P}^\pm$.
\item the following maps are real analytic
\begin{align*}
\xi_\ddagger^\pm:\mathcal{D}_0 &\longrightarrow\mathcal{C}^\mu(\partial_{\infty}\Gamma, \mathsf{G}/\mathsf{P}^\pm)\\
u &\longmapsto \xi^\pm_u
\end{align*} 
where $\mathcal{C}^\mu(\partial_{\infty}\Gamma, \mathsf{G}/\mathsf{P}^\pm)$ is the space of all $\mu$-H\"older continuous maps from $\partial_{\infty}\Gamma$ to $\mathsf{G}/\mathsf{P}^\pm$.
\end{enumerate}
\end{theorem}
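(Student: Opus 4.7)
The plan is to follow the structural stability strategy of Hirsch--Pugh--Shub adapted to the Anosov representation setting, as carried out in section 6 of \cite{pressure metric}, and then apply an analytic implicit function theorem in a Banach space of H\"older sections to upgrade continuity to analyticity. The only novelty relative to the semisimple case is that $\mathsf{G}=\mathsf{SO}^0(2,1)\ltimes\R^3$ is not semisimple, but since the hypothesis is partial $(\mathsf{G},\mathsf{P}^\pm)$-Anosov, only the contraction of $\Xi_{\rho_0}^\pm$ along the Gromov flow $\tilde{\psi}_t$ is used, and this input is axiomatic; the absence of semisimplicity does not enter.

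First I would reformulate the limit maps as invariant sections of an equivariant bundle. Consider the flat $\mathsf{G}/\mathsf{P}^+$-bundle
\[\mathcal{F}^+_\rho \defeq \Gamma \backslash \bigl(\widetilde{\mathsf{U}_0\Gamma} \times \mathsf{G}/\mathsf{P}^+\bigr) \longrightarrow \mathsf{U}_0\Gamma,\]
where $\Gamma$ acts diagonally via its natural action on $\widetilde{\mathsf{U}_0\Gamma}$ and via $\rho$ on $\mathsf{G}/\mathsf{P}^+$. A $\Gamma$-equivariant continuous map $\xi^+_\rho : \partial_\infty \Gamma \to \mathsf{G}/\mathsf{P}^+$ corresponds to a $\tilde{\psi}_t$-invariant section of the pullback bundle, i.e.\ a continuous section of $\mathcal{F}_\rho^+$ invariant under the Gromov flow $\psi_t$ and depending only on the forward endpoint. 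The partial Anosov condition asserts that the differential of $\psi_t$ contracts $\Xi^+_{\rho_0}$ as $t\to\infty$, which is precisely the hyperbolicity needed for the Hirsch--Pugh--Shub invariant section theorem to apply. This yields, for every $\rho_u$ sufficiently close to $\rho_0$, a unique nearby continuous $\psi_t$-invariant section, hence a unique continuous $\Gamma$-equivariant limit map $\xi^+_u$. The same argument with $t\to -\infty$ produces $\xi^-_u$. Restricting the parameter disk produces $\mathcal{D}_0$ and establishes item (2), together with a uniform H\"older exponent $\mu$ controlled by the ratio of the contraction rate and the expansion rate of $\tilde{\psi}_t$ along $\widetilde{\mathsf{U}_0\Gamma}$; Lipschitz behavior of $\tilde{\psi}_t$ from section 2.3 gives this ratio.

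Next, I would obtain the analytic dependence on $u$. The key is to realize $\xi^+_u$ as the fixed point of a graph transform $T_u$ acting on a closed ball in the Banach space $\mathcal{C}^\mu\bigl(\partial_\infty \Gamma, \mathsf{G}/\mathsf{P}^+\bigr)$ of $\mu$-H\"older maps (working locally in a chart so that $\mathsf{G}/\mathsf{P}^+$ is replaced by $\R^{\dim \mathsf{G}/\mathsf{P}^+}$ and ball-valued sections). Since $\{\rho_u\}$ is real analytic in $u$ and the $\Gamma$ action on $\mathsf{G}/\mathsf{P}^+$ is analytic, the map $(u,\xi) \mapsto T_u(\xi)$ is real analytic in $u$ and smooth in $\xi$, with derivative $D_\xi T_{\rho_0}$ a contraction of norm strictly less than one at the fixed point. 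The real analytic implicit function theorem in Banach spaces (see e.g.\ Whittlesey, or the version cited in section 6 of \cite{pressure metric}) then yields that $u \mapsto \xi^+_u$ is real analytic as a map $\mathcal{D}_0 \to \mathcal{C}^\mu(\partial_\infty \Gamma, \mathsf{G}/\mathsf{P}^+)$, which is exactly item (5). Item (3) is an immediate evaluation consequence, since the point-evaluation at $x\in \partial_\infty \Gamma$ is a continuous linear functional on $\mathcal{C}^\mu(\partial_\infty \Gamma, \mathsf{G}/\mathsf{P}^+)$.

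The most delicate step is item (4), the joint regularity statement that $x \mapsto \xi^\pm_x \in \mathcal{C}^\omega(\mathcal{D}_0, \mathsf{G}/\mathsf{P}^\pm)$ is H\"older. For this I would equip $\mathcal{C}^\omega(\mathcal{D}_0,\mathsf{G}/\mathsf{P}^\pm)$ with the topology of uniform convergence on a slightly shrunken compact subdisk $\overline{\mathcal{D}_1}\subset\mathcal{D}_0$, making it a Fr\'echet space (locally a Banach space of bounded holomorphic maps after complexification and Cauchy estimates). I would then redo the graph transform argument in the Banach space
\[\mathcal{C}^\mu\bigl(\partial_\infty \Gamma, \mathcal{C}^\omega(\overline{\mathcal{D}_1}, \mathsf{G}/\mathsf{P}^\pm)\bigr),\]
regarding $u \mapsto \xi^\pm_u(x)$ as a single $\mathcal{C}^\omega(\overline{\mathcal{D}_1},\cdot)$-valued function of $x$ and checking that the graph transform still contracts in this stronger norm, which follows because the contraction estimate is uniform in $u$ on $\overline{\mathcal{D}_1}$. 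Fixed point uniqueness then forces the fixed section here to agree with the one obtained before, yielding (4). The main obstacle is verifying the uniform contraction in this Fr\'echet-valued Banach space and the interchange of supremum over $u$ with the H\"older modulus in the boundary variable; once the bounds of section 6 of \cite{pressure metric} are ported over (they depend only on the hyperbolic behavior of $\tilde{\psi}_t$ and on Lipschitz bounds for $\rho_u$, not on semisimplicity), the conclusion follows.
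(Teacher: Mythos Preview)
Your proposal is correct in outline and shares the same structural-stability backbone as the paper: both reduce the problem to finding a flow-invariant section of the associated $\mathsf{G}/\mathsf{P}^\pm$-bundle over $\mathsf{U}_0\Gamma$ and invoke the Hirsch--Pugh--Shub type machinery of section~6 of \cite{pressure metric}.

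The genuine difference is in how analyticity is obtained. The paper does \emph{not} stay in the real analytic category. Instead it first proves a complex version (Theorem~\ref{canal}): it complexifies the family to $\{\rho_u\}_{u\in\mathcal{D}^{\mathbb C}}\subset\mathsf{Hom}(\Gamma,\mathsf{G}_{\mathbb C})$, builds the bundle over $\mathcal{D}^{\mathbb C}\times\mathsf{U}_0\Gamma$, and applies theorem~6.5 of \cite{pressure metric} once to obtain a section that is $\mu$-H\"older and \emph{transversely complex analytic}. That single notion already encodes your items (2)--(4) simultaneously, and lemma~6.8 of \cite{pressure metric} then gives (5). Finally the paper descends to the real statement by using the anti-holomorphic involution $\mathfrak{i}$ on $\mathsf{G}_{\mathbb C}/\mathsf{P}^\pm_{\mathbb C}$ together with local uniqueness of the invariant section to conclude that $\xi^\pm_u$ lands in $\mathsf{G}/\mathsf{P}^\pm$ for real $u$.

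What each route buys: your direct approach avoids introducing $\mathsf{G}_{\mathbb C}$, but pays for it by having to treat (4) separately via a second graph-transform argument in the space $\mathcal{C}^\mu\bigl(\partial_\infty\Gamma,\mathcal{C}^\omega(\overline{\mathcal{D}_1},\cdot)\bigr)$; as you yourself note, to make $\mathcal{C}^\omega$ into a workable Banach space you end up complexifying anyway. The paper's route front-loads the complexification, so that holomorphic function spaces (which are genuine Banach spaces with Cauchy estimates) are available throughout, and the packaged ``transversely analytic section'' theorem disposes of (2)--(4) in one stroke; the price is the extra involution step at the end to come back to the real form.
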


We note that Theorem \ref{lanal} will be proved using the following more general result.

\begin{theorem}\label{canal} 
Let $\{\rho_u\}_{u\in \mathcal{D}^{\mathbb{C}}}\subset\mathsf{Hom}(\Gamma, \mathsf{G}_{\mathbb{C}})$ be a complex analytic family of injective homomorphisms parameterized by an open disk $\mathcal{D}^{\mathbb{C}}$ around 0. Also let $\rho_0$ be a partial $(\mathsf{G}_{\mathbb{C}},\mathsf{P}^\pm_{\mathbb{C}})$-Anosov representation (where $\mathsf{G}_{\mathbb{C}}\cong\mathsf{SO}(3,\mathbb{C})\ltimes\mathbb{C}^3$)  with limit maps 
\[\xi^\pm_0 : \partial_{\infty}\Gamma \longrightarrow \mathsf{G}_{\mathbb{C}}/\mathsf{P}^\pm_{\mathbb{C}}.\] 
Then there exists an open disk $\mathcal{D}_0^{\mathbb{C}}$ containing 0, $\mathcal{D}^{\mathbb{C}}_0\subset\mathcal{D}^{\mathbb{C}}$, a positive real number $\mu$ and a pair of continuous maps
\[\xi^\pm : \mathcal{D}_0^{\mathbb{C}}\times\partial_{\infty}\Gamma \longrightarrow \mathsf{G}_{\mathbb{C}}/\mathsf{P}^\pm_{\mathbb{C}}\] such that the following conditions hold:
\begin{enumerate}
\item for all $x\in\partial_{\infty}\Gamma$ we have $\xi^\pm(0,x)=\xi^\pm_0(x)$,
\item the representation $\rho_u$ is partial $(\mathsf{G}_{\mathbb{C}},\mathsf{P}^\pm_{\mathbb{C}})$-Anosov for all $u\in\mathcal{D}_0^{\mathbb{C}}$  and the corresponding limit maps denoted by
\begin{align*}
\xi^\pm_u : \partial_{\infty}\Gamma &\longrightarrow \mathsf{G}_{\mathbb{C}}/\mathsf{P}^\pm_{\mathbb{C}}\\
x &\longmapsto \xi^\pm(u,x),
\end{align*}
are $\mu$-H\"older continuous,
\item for all $x\in\partial_{\infty}\Gamma$ the following maps are real analytic
\begin{align*}
\xi^\pm_x : \mathcal{D}_0^{\mathbb{C}} &\longrightarrow \mathsf{G}_{\mathbb{C}}/\mathsf{P}^\pm_{\mathbb{C}}\\
u &\longmapsto \xi^\pm(u, x),
\end{align*}
\item the following maps are $\mu$-H\"older continuous
\begin{align*}
\xi_\dagger^\pm:\partial_{\infty}\Gamma&\longrightarrow\mathcal{C}^\omega (\mathcal{D}_0^{\mathbb{C}}, \mathsf{G}_{\mathbb{C}}/\mathsf{P}^\pm_{\mathbb{C}})\\
x &\longmapsto \xi^\pm_x
\end{align*} 
where $\mathcal{C}^\omega (\mathcal{D}_0^{\mathbb{C}}, \mathsf{G}_{\mathbb{C}}/\mathsf{P}^\pm_{\mathbb{C}})$ is the space of all complex analytic functions from $\mathcal{D}_0^{\mathbb{C}}$ to $\mathsf{G}_{\mathbb{C}}/\mathsf{P}^\pm_{\mathbb{C}}$.
\item the following maps are complex analytic
\begin{align*}
\xi_\ddagger^\pm:\mathcal{D}_0^{\mathbb{C}} &\longrightarrow\mathcal{C}^\mu(\partial_{\infty}\Gamma, \mathsf{G}_{\mathbb{C}}/\mathsf{P}^\pm_{\mathbb{C}})\\
u &\longmapsto \xi^\pm_u
\end{align*} 
where $\mathcal{C}^\mu(\partial_{\infty}\Gamma, \mathsf{G}_{\mathbb{C}}/\mathsf{P}^\pm_{\mathbb{C}})$ is the space of all $\mu$-H\"older continuous maps from $\partial_{\infty}\Gamma$ to $\mathsf{G}_{\mathbb{C}}/\mathsf{P}^\pm_{\mathbb{C}}$.
\end{enumerate}
\end{theorem}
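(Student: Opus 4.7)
The plan is to imitate the strategy of Section~6 of \cite{pressure metric}, with modifications needed to handle the fact that $\mathsf{G}_\mathbb{C}$ is non-semisimple. The idea is to realise the pair $(\xi^+_u,\xi^-_u)$ as the unique fixed point of a holomorphic family of contracting operators on a Banach space of $\mu$-Hölder equivariant sections, and then apply the holomorphic implicit function theorem in Banach spaces to obtain analytic dependence on $u$.

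First I would reinterpret the limit map $\xi^+_\rho$ dynamically as a $\Gamma$-equivariant section of the trivial bundle $\widetilde{\mathsf{U}_0\Gamma}\times \mathsf{G}_\mathbb{C}/\mathsf{P}^+_\mathbb{C}$ whose graph is invariant under the flow $\tilde{\psi}_t$ twisted by $\rho_u$ and along which the transverse direction $\mathsf{E}^+_\mathbb{C}$ is contracted as $t\to+\infty$ (dually for $\xi^-_\rho$). The partial Anosov property of $\rho_0$ provides a uniform exponential contraction on a cocompact fundamental domain for $\Gamma$ on $\widetilde{\mathsf{U}_0\Gamma}$, which descends to a spectral gap for the linearised graph transform acting on small equivariant Hölder perturbations of $\xi^\pm_0$. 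Here one uses the decomposition
\[
\mathsf{T}(\mathsf{G}_\mathbb{C}/\mathsf{L}_\mathbb{C})=\mathsf{E}^+_\mathbb{C}\oplus\mathsf{E}^-_\mathbb{C}
\]
established in Section 3 to treat the two factors separately.

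Working in local charts on $\mathsf{G}_\mathbb{C}/\mathsf{P}^\pm_\mathbb{C}$ centred on the image of $\xi^\pm_0$, I would set up the Banach space $\mathcal{B}^\pm$ of $\mu$-Hölder sections small in the $\mathcal{C}^\mu$ norm, for a fixed Hölder exponent $\mu>0$ provided by the Hirsch--Pugh--Shub theory. The characterisation above then becomes a fixed point equation $\eta^\pm_u=\mathcal{T}^\pm_u(\eta^\pm_u)$ with $\eta^\pm_0=0$. The spectral gap implies that $\mathrm{Id}-D_\eta\mathcal{T}^\pm_0$ is invertible on $\mathcal{B}^\pm$, and the holomorphy of $u\mapsto\rho_u$ gives the holomorphy of $u\mapsto \mathcal{T}^\pm_u$. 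The holomorphic implicit function theorem in Banach spaces then produces a unique holomorphic map $u\mapsto \eta^\pm_u\in\mathcal{B}^\pm$ on a small disc $\mathcal{D}_0^{\mathbb{C}}$, and setting $\xi^\pm(u,x)\defeq\xi^\pm_0(x)+\eta^\pm_u(x)$ yields (1) and (5) directly. Claim (3) follows by composing the holomorphic map $u\mapsto\eta^\pm_u$ with the (bounded linear, hence holomorphic) evaluation-at-$x$ map; claim (4) follows because the Taylor coefficients at $u=0$ of $u\mapsto\eta^\pm_u$ lie in $\mathcal{B}^\pm$ and are therefore Hölder functions of $x$. Finally, (2) is a consequence of the openness of the Anosov condition in the $\mathcal{C}^\mu$ topology together with the smallness of $\eta^\pm_u$ for $u$ near $0$.

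The main obstacle is establishing the spectral gap for the linearised graph transform $D_\eta\mathcal{T}^\pm_0$ without invoking semisimplicity; in \cite{pressure metric} the analogous estimate uses the symmetric-space structure to identify transverse directions with a root-space decomposition and to bound the derivative of the geodesic flow via Cartan projections. Here I would instead exploit the explicit identification $\mathsf{G}_\mathbb{C}/\mathsf{L}_\mathbb{C}\cong\mathcal{N}_\mathbb{C}$ from Section~3 together with the hypothesis that $\Xi^\pm_{\rho_0}$ is contracted by the lifted flow $\tilde{\psi}_t$, carrying out the estimates on each direct summand $\mathsf{E}^\pm_\mathbb{C}$ separately. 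Once this spectral estimate is in place, the remainder of the BCLS argument adapts to our setting without essential change.
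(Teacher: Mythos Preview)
Your overall strategy matches the paper's: both follow Section~6 of \cite{pressure metric}, realising the limit maps as flow-invariant contracting sections of an associated bundle over $\mathcal{D}^{\mathbb{C}}\times\mathsf{U}_0\Gamma$ and then invoking a persistence-of-invariant-sections result. Two points, however, deserve correction.

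First, your worry about the spectral gap in the absence of semisimplicity is misplaced. The exponential contraction of $\Xi^\pm_{\rho_0}$ under the lifted flow is part of the \emph{hypothesis} that $\rho_0$ is partial Anosov; no root-space decomposition or Cartan projection is required to establish it. The paper accordingly applies Theorem~6.5 of \cite{pressure metric} directly---that theorem is formulated for an abstract fibre bundle equipped with a transversely contracting bundle map, and never refers to the algebraic structure of the fibre or of $\mathsf{G}$---and obtains the transversely holomorphic family of sections $\sigma$ over $\mathcal{D}^{\mathbb{C}}_1\times\mathsf{U}_0\Gamma$ as a black box. You do not need to redo the Hirsch--Pugh--Shub estimate.

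Second, there is a genuine gap in your outline. Your fixed-point argument produces a section $\eta^\pm_u$ over $\mathsf{U}_0\Gamma$ (equivalently a $\Gamma$-equivariant section over $\widetilde{\mathsf{U}_0\Gamma}=\partial_\infty\Gamma^{(2)}\times\mathbb{R}$), but you then write $\xi^\pm(u,x)=\xi^\pm_0(x)+\eta^\pm_u(x)$ as though the result were already a function of $x\in\partial_\infty\Gamma$ alone. This descent is not automatic, and the paper devotes a careful argument to it: flow-invariance of the fixed section gives independence of the $\mathbb{R}$-coordinate; then, for each infinite-order $\gamma$, one shows that the value of the lifted section at $(\gamma^-,\gamma^+,0)$ is an \emph{attracting} fixed point of $\rho_u(\gamma)^{-1}$ acting on $\mathsf{G}_\mathbb{C}/\mathsf{P}^+_\mathbb{C}$ (the contraction estimate on $\Xi^+$ is exactly what forces this), and combines this with the asymptotic convergence of flow lines along the weak stable leaf to deduce $\eta(u,(\gamma^-,\gamma^+,0))=\eta(u,(\gamma^-,z,0))$ for every $z$; density of the pairs $(\gamma^-,\gamma^+)$ in $\partial_\infty\Gamma^{(2)}$ then yields independence of the second coordinate. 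Only after this step do you have a map on $\mathcal{D}_0^{\mathbb{C}}\times\partial_\infty\Gamma$, and only then can claims (1)--(5) even be stated. Your implicit-function formulation does not subsume this step; the graph transform acts on sections over $\mathsf{U}_0\Gamma$, not on maps from $\partial_\infty\Gamma$, so the factoring has to be argued separately.
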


\begin{proof}
Let $\{\rho_u\}_{u\in\mathcal{D}^{\mathbb{C}}}\subset\mathsf{Hom}(\Gamma,\mathsf{G}_{\mathbb{C}})$ be a complex analytic family of injective homomorphisms such that $\rho_0$ is partial $(\mathsf{G}_{\mathbb{C}},\mathsf{P}^\pm_{\mathbb{C}})$-Anosov. Now we consider the trivial $\mathsf{G}_{\mathbb{C}}/\mathsf{P}^+_{\mathbb{C}}$-bundle over $\mathcal{D}^{\mathbb{C}}\times\widetilde{\mathsf{U}_0\Gamma}$ as follows:
\begin{align*}
\pi : \tilde{A} \defeq  \mathcal{D}^{\mathbb{C}}\times\widetilde{\mathsf{U}_0\Gamma}\times\mathsf{G}_{\mathbb{C}}/\mathsf{P}^+_{\mathbb{C}} \longrightarrow \mathcal{D}^{\mathbb{C}}\times\widetilde{\mathsf{U}_0\Gamma}.
\end{align*}
Furthermore, we consider the following action of $\Gamma$ on $\tilde{A}$
\[\gamma (u, x, [g]) = (u, \gamma(x), [\rho_u (\gamma)g])\]
where $\gamma\in\Gamma$. We notice that the quotient bundle $A \defeq \Gamma\backslash\tilde{A}$ is a Lipschitz transversely complex analytic $\mathsf{G}_{\mathbb{C}}/\mathsf{P}^+_{\mathbb{C}}$-bundle over $\mathcal{D}^{\mathbb{C}}\times\mathsf{U}_0\Gamma$ (see definition 6.3 and definition 6.4 of \cite{pressure metric} for a definition of transverse analyticity). 

Let us denote the geodesic flow on $\tilde{A}$ which is a lift of the geodesic flow $\{\tilde{\psi}_t\}_{t\in\mathbb{R}}$ on $\widetilde{\mathsf{U}_0\Gamma}$ by $\{\tilde{\Psi}_t\}_{t\in\mathbb{R}}$. Moreover, we choose a geodesic flow $\{\Psi_t\}_{t\in\mathbb{R}}$ on $A$ which is a lift of the geodesic flow $\{\psi_t\}_{t\in\mathbb{R}}$ on $\mathsf{U}_0\Gamma$. We note that the flow  $\{\tilde{\Psi}_t\}_{t\in\mathbb{R}}$ acts trivially on the $\mathsf{G}_{\mathbb{C}}/\mathsf{P}^+_{\mathbb{C}}$ and $\mathcal{D}^{\mathbb{C}}$ components.\\
Now as $\rho_0$ is partial $(\mathsf{G}_{\mathbb{C}},\mathsf{P}^\pm_{\mathbb{C}})$-Anosov with limit maps
\[\xi^\pm_0: \partial_{\infty}\Gamma \rightarrow \mathsf{G}_{\mathbb{C}}/\mathsf{P}^\pm_{\mathbb{C}},\]
the following map $\tilde{\sigma}_0$ defines a $\Gamma$-equivariant section of the restriction of the bundle $\tilde{A}$ over $\{0\}\times\widetilde{\mathsf{U}_0\Gamma}$,
\begin{align*}
\tilde{\sigma}_0 : \{0\}\times\widetilde{\mathsf{U}_0\Gamma} &\longrightarrow \tilde{A}\\
(0, (x, y, t)) &\longmapsto (0, (x, y, t), \xi^+_0(x)).
\end{align*}
Therefore the section $\tilde{\sigma}_0$ gives rise to a section $\sigma_0$ of $A$ over $\{0\}\times\mathsf{U}_0\Gamma$.

Since $\rho_0$ is partial $(\mathsf{G}_{\mathbb{C}},\mathsf{P}^\pm_{\mathbb{C}})$-Anosov, the bundle $\Xi^+_{\rho_0}$ over $\{0\}\times\mathsf{U}_0\Gamma$ with fiber $\mathsf{T}_{\sigma_0(0, \mathfrak{X})}\pi^{-1}(0, \mathfrak{X})$ gets contracted by the lift of the geodesic flow $\psi_t$ as $t$ goes to $\infty$. Hence there exists a real number $t_0$ such that for all $\mathfrak{X}$ in $\mathsf{U}_0\Gamma$ we have
\[\left\Vert\left(\mathsf{D}^{\psi_{t_0}}\Psi_{t_0}\right)_{\sigma_0(0,\mathfrak{X})}\right\Vert< 1\]
where
\[\left(\mathsf{D}^{\psi_{t_0}}\Psi_{t_0}\right)_{\sigma_0(0,\mathfrak{X})}: \mathsf{T}_{\sigma_0(0, \mathfrak{X})}\pi^{-1}(0, \mathfrak{X})\rightarrow \mathsf{T}_{\sigma_0(0, \psi_{t_0}\mathfrak{X})}\pi^{-1}(0, \psi_{t_0}\mathfrak{X})\]
is the fiberwise map of the bundle automorphism induced by $\psi_{t_0}$ or in short ``lift of $\psi_{t_0}$". Now using theorem 6.5 of \cite{pressure metric} we get that there exists an open disk $\mathcal{D}^{\mathbb{C}}_1$ containing $0$, $\mathcal{D}^{\mathbb{C}}_1\subset \mathcal{D}^{\mathbb{C}}$, a positive real number $\mu$, and a $\mu$-H\"older transversely complex analytic section (see definition 6.3 and definition 6.4 of \cite{pressure metric} for a definition of transverse analyticity)
\[\sigma : \mathcal{D}^{\mathbb{C}}_1\times\mathsf{U}_0\Gamma \longrightarrow A\] such that the following holds:
\begin{enumerate}
\item $\left.\sigma\right|_{\{0\}\times\mathsf{U}_0\Gamma}=\sigma_0$,
\item $\Psi_{t_0}(\sigma(u,.))=\sigma(u,\psi_{t_0}(.))$,
\item for all $\mathfrak{X}\in\mathsf{U}_0\Gamma$ and $u\in\mathcal{D}^{\mathbb{C}}_1$ we have
 \[\left\Vert\left(\mathsf{D}^{\psi_{t_0}}\Psi_{t_0}\right)_{\sigma(u,\mathfrak{X})}\right\Vert< 1.\]
\end{enumerate}
Now using theorem 6.5 (4) of \cite{pressure metric} we deduce that for all real number $t$
\[\Psi_t(\sigma(u,.))=\sigma(u,\psi_t(.)).\]
Therefore we get that there exists an open disk $\mathcal{D}^{\mathbb{C}}_1$ containing $0$, $\mathcal{D}^{\mathbb{C}}_1\subset \mathcal{D}^{\mathbb{C}}$, a positive real number $\mu$, and a $\mu$-H\"older transversely complex analytic section $\sigma$ of the bundle $A$ such that
\begin{enumerate}
\item $\left.\sigma\right|_{\{0\}\times\mathsf{U}_0\Gamma}=\sigma_0$,
\item for all $t\in\mathbb{R}$ we have $\Psi_t(\sigma(u,.))=\sigma(u,\psi_t(.))$,
\item $\Psi_t$ is contracting along $\sigma$ as $t$ goes to $\infty$.
\end{enumerate}
Now we can lift the section $\sigma$ to get a section $\tilde{\sigma}$ as follows:
\[\tilde{\sigma} : \mathcal{D}^{\mathbb{C}}_1\times\widetilde{\mathsf{U}_0\Gamma} \rightarrow \tilde{A} = \mathcal{D}^{\mathbb{C}}_1\times\widetilde{\mathsf{U}_0\Gamma}\times\mathsf{G}_{\mathbb{C}}/\mathsf{P}^+_{\mathbb{C}}.\]
Let $\pi_3$ be the projection of $\mathcal{D}^{\mathbb{C}}_1\times\widetilde{\mathsf{U}_0\Gamma}\times\mathsf{G}_{\mathbb{C}}/\mathsf{P}^+_{\mathbb{C}}$ onto $\mathsf{G}_{\mathbb{C}}/\mathsf{P}^+_{\mathbb{C}}$. Therefore we get a map
\[\eta\defeq\pi_3\circ\tilde{\sigma} : \mathcal{D}^{\mathbb{C}}_1\times\widetilde{\mathsf{U}_0\Gamma} \rightarrow \mathsf{G}_{\mathbb{C}}/\mathsf{P}^+_{\mathbb{C}}.\]
Since $\Psi_t(\sigma(u,.))=\sigma(u,\psi_t(.))$ for all $t\in\mathbb{R}$ we get that the map $\eta$ is invariant under the flow $\{\tilde{\psi}_t\}_{t\in\mathbb{R}}$. Hence the expression $\eta(u, (x, y, t))$ is independent of the variable $t$.

Now let $\gamma\in\Gamma$ be an element of infinite order with period $t_\gamma$ i.e. 
\[\gamma(\gamma^-,\gamma^+,0)=(\gamma^-,\gamma^+, t_\gamma).\]
We notice that as $\eta_u(\gamma^-,\gamma^+,t)$ is independent of the variable $t$, we have
\[\gamma^{-n}\eta_u(\gamma^-,\gamma^+,0)=\eta_u(\gamma^-,\gamma^+,-nt_\gamma)= \eta_u(\gamma^-,\gamma^+,0)\]
and hence $\eta_u(\gamma^-,\gamma^+,0)$ is a fixed point of $\gamma^{-1}$. We claim that it is an attracting fixed point. Indeed, as $\tilde{\Psi}_t$ is contracting as $t$ goes to $\infty$ and as $\|.\|$ is $\Gamma$-equivariant we have for all $X$ in $\mathsf{T}_{\eta_u(\gamma^-,\gamma^+,0)}\mathsf{G}_\mathbb{C}/\mathsf{P}^+_\mathbb{C}$ that
\begin{align*}
\|\gamma^{-n}X\|_{\eta_u(\gamma^-,\gamma^+,0)} &= \|X\|_{\eta_u(\gamma^n(\gamma^-,\gamma^+,0))}\\
&= \|X\|_{\eta_u(\gamma^-,\gamma^+,nt_\gamma)}\\
&= \|X\|_{\tilde{\Psi}_{nt_\gamma}\eta_u(\gamma^-,\gamma^+,0)} \leqslant Ae^{-ct_\gamma n}\|X\|_{\eta_u(\gamma^-,\gamma^+,0)}.
\end{align*}
Hence for $m$ large enough the operator norm $\|\gamma^{-m}\|<1$ and we have that there exists a ball $\mathsf{B}_{d}(\eta_u(\gamma^-,\gamma^+,0),k_0)$ of radius $k_0$ around $\eta_u(\gamma^-,\gamma^+,0)$ for some metric $d$ on $\mathsf{G}_\mathbb{C}/\mathsf{P}^+_\mathbb{C}$ such that $\gamma^{-m}$ is contracting on the ball. Hence $\gamma^{-1}$ is also contracting on the ball. We call the ball $\mathsf{B}_{d}(\eta_u(\gamma^-,\gamma^+,0),k_0)$ a $\textit{basin of convergence}$ for the action of $\gamma^{-1}$ around $\eta_u(\gamma^-,\gamma^+,0)$. Therefore in particular for any sequence $\{p_n\}_{n\in\mathbb{N}}$ in $\mathsf{B}_{d}(\eta_u(\gamma^-,\gamma^+,0),k_0)$ we have that
\[\lim_{n\to\infty}d(\eta_u(\gamma^-,\gamma^+,0),\gamma^{-n}p_n)=0.\]
Moreover, for any $\Gamma$-invariant metric $\mathfrak{d}$ on $\widetilde{\mathsf{U}_0\Gamma}$ and given any $z\in\partial_\infty\Gamma$ there exists $t_z$ such that 
\[\lim_{t\to-\infty}\mathfrak{d}(\tilde{\psi}_t(\gamma^-,\gamma^+,0),\tilde{\psi}_t(\gamma^-,z,t_z))=0.\]
Hence
\begin{align*}
0&=\lim_{n\to\infty}\mathfrak{d}((\gamma^-,\gamma^+,-nt_\gamma),(\gamma^-,z,t_z-nt_\gamma))\\
&=\lim_{n\to\infty}\mathfrak{d}(\gamma^{-n}(\gamma^-,\gamma^+,0),(\gamma^-,z,t_z-nt_\gamma))\\
&=\lim_{n\to\infty}\mathfrak{d}((\gamma^-,\gamma^+,0),\gamma^n(\gamma^-,z,t_z-nt_\gamma)).
\end{align*}
Therefore if we take $p_n=\eta_u(\gamma^n(\gamma^-,z,t_z-nt_\gamma))$ then the sequence is eventually in $\mathsf{B}_d(\eta_u(\gamma^-,\gamma^+,0),k_0)$ and we get that
\begin{align*}
0&=\lim_{n\to\infty}d(\eta_u(\gamma^-,\gamma^+,0),\gamma^{-n}\eta_u(\gamma^n(\gamma^-,z,t_z-nt_\gamma)))\\
&=\lim_{n\to\infty}d(\eta_u(\gamma^-,\gamma^+,0),\eta_u(\gamma^-,z,t_z-nt_\gamma)).
\end{align*}
Now as $\eta(u, (x, y, t))$ is independent of $t$ we get that
\[0=\lim_{n\to\infty}d(\eta_u(\gamma^-,\gamma^+,0),\eta_u(\gamma^-,z,0))\]
and hence $\eta_u(\gamma^-,\gamma^+,0)=\eta_u(\gamma^-,z,0)$.
Moreover, as the fixed points of infinite order elements are dense in $\partial_\infty\Gamma$ we conclude that $\eta(u, (x, y, t))$ is independent of the variable $y$. Therefore there exists a $\Gamma$-equivariant H\"older transversely complex analytic map
\[\xi^+: \mathcal{D}^{\mathbb{C}}_1\times\partial_\infty\Gamma \longrightarrow \mathsf{G}_{\mathbb{C}}/\mathsf{P}^+_{\mathbb{C}}\]
such that for all $x\in\partial_{\infty}\Gamma$ we have $\xi^+(0,x)=\xi^+_0(x)$.

In a similar way we get that there exists an open disk $\mathcal{D}^{\mathbb{C}}_2$ containing $0$, $\mathcal{D}^{\mathbb{C}}_2\subset \mathcal{D}^{\mathbb{C}}$ such that there exists a $\Gamma$-equivariant H\"older transversely complex analytic map
\[\xi^-: \mathcal{D}^{\mathbb{C}}_2\times\partial_\infty\Gamma \longrightarrow \mathsf{G}_{\mathbb{C}}/\mathsf{P}^-_{\mathbb{C}}\]
such that for all $x\in\partial_{\infty}\Gamma$ we have $\xi^-(0,x)=\xi^-_0(x)$. \\
Moreover, we recall that $\mathcal{N}_\mathbb{C}$ is open in $\mathbb{X}_\mathbb{C}\times\mathbb{X}_\mathbb{C}$ and we know that
\[\mathsf{G}_{\mathbb{C}}/\mathsf{L}_{\mathbb{C}}\cong\mathcal{N}_\mathbb{C}.\]
Hence $\mathsf{G}_{\mathbb{C}}/\mathsf{L}_{\mathbb{C}}$ is an open subset of $\mathsf{G}_{\mathbb{C}}/\mathsf{P}^+_{\mathbb{C}}\times\mathsf{G}_{\mathbb{C}}/\mathsf{P}^-_{\mathbb{C}}$. Now as 
\[(\xi^+_0,\xi^-_0)(\partial_\infty\Gamma^{(2)})\subset\mathsf{G}_{\mathbb{C}}/\mathsf{L}_{\mathbb{C}},\] we get that there exists an open disk $\mathcal{D}^{\mathbb{C}}_0$ containing $0$, $\mathcal{D}^{\mathbb{C}}_0\subset \mathcal{D}^{\mathbb{C}}_1\cap\mathcal{D}^{\mathbb{C}}_2$ such that 
\[(\xi^+,\xi^-)(\mathcal{D}^{\mathbb{C}}_0\times\partial_\infty\Gamma^{(2)})\subset\mathsf{G}_{\mathbb{C}}/\mathsf{L}_{\mathbb{C}}.\] 
Therefore we have proved properties (1), (2), (3) and (4). Now using lemma 6.8 of \cite{pressure metric} we get property (5).
\end{proof}

\begin{proof}[Proof of Theorem \ref{lanal}] 
Let $\{\rho_u\}_{u\in\mathcal{D}}\subset\mathsf{Hom}(\Gamma,\mathsf{G})$ be a real analytic family of injective homomorphisms parametrized by a real open disk $\mathcal{D}$ such that $\rho_0$ is partial $(\mathsf{G},\mathsf{P}^\pm)$-Anosov. We note that if a representation $\rho$ is partial $(\mathsf{G},\mathsf{P}^\pm)$-Anosov then it is also partial $(\mathsf{G}_{\mathbb{C}},\mathsf{P}^\pm_{\mathbb{C}})$-Anosov. Hence $\rho_0$ is partial $(\mathsf{G}_{\mathbb{C}},\mathsf{P}^\pm_{\mathbb{C}})$-Anosov. Furthermore, we can choose a real open disk $\mathcal{D}_3$ containing 0, $\mathcal{D}_3\subset\mathcal{D}$ such that its complexification $\mathcal{D}^{\mathbb{C}}_3$ parametrizes a family of representations $\{\rho_u\}_{u\in\mathcal{D}^{\mathbb{C}}_3}\subset\mathsf{Hom}(\Gamma,\mathsf{G}_{\mathbb{C}})$.

Now using theorem \ref{canal} we get that there exists an open disc $\mathcal{D}^{\mathbb{C}}_{00}$ containing 0, $\mathcal{D}^{\mathbb{C}}_{00}\subset\mathcal{D}^{\mathbb{C}}_3$ and a $\Gamma$-equivariant H\"older transversely complex analytic map 
\[\xi^+:\mathcal{D}^{\mathbb{C}}_{00}\times\partial_{\infty}\Gamma \longrightarrow \mathsf{G}_{\mathbb{C}}/\mathsf{P}^+_{\mathbb{C}}\] such that for all $x\in\partial_{\infty}\Gamma$ we have
\[\xi^+(0,x)=\xi^+_0(x).\]
We claim that there exists an open disk $\mathcal{D}_{01}\subset\mathcal{D}_{00}$, containing $0$, such that 
\[\xi^+(\mathcal{D}_{01}\times\partial_{\infty}\Gamma)\subset\mathsf{G}/\mathsf{P}^+.\]
Indeed, to begin with we notice that $\xi^+(\{0\}\times\partial_{\infty}\Gamma)\subset\mathsf{G}/\mathsf{P}^+$. Now using theorem 6.5 (4) of \cite{pressure metric} we get that there exists an open disk $\mathcal{D}_4^{\mathbb{C}}\subset\mathcal{D}_{00}^{\mathbb{C}}$, containing $0$, and a neighborhood $\mathsf{B}$ of $\xi^+(\mathcal{D}_4^{\mathbb{C}}\times\partial_{\infty}\Gamma)$ such that the limit map is unique in $\mathsf{B}$. Let $\mathfrak{i}$ be the anti-holomorphic involution on $\mathsf{G}_{\mathbb{C}}/\mathsf{P}^+_{\mathbb{C}}$. As $\mathfrak{i}$ is continuous and $\mathfrak{i}\circ\xi^+_0=\xi^+_0$ we obtain that there exists an open disk $\mathcal{D}_5^{\mathbb{C}}\subset\mathcal{D}_{00}^{\mathbb{C}}$, containing $0$, such that
\[\mathfrak{i}\circ\xi^+(\mathcal{D}_5^{\mathbb{C}}\times\partial_{\infty}\Gamma)\subset\mathsf{B}.\]
We define 
\[\mathcal{D}_{01}^{\mathbb{C}}\defeq\mathcal{D}_4^{\mathbb{C}}\cap\mathcal{D}_5^{\mathbb{C}}\]
and by local uniqueness of the limit map we notice that for all $u\in\mathcal{D}_{01}^{\mathbb{C}}$ the following holds:
\[\mathfrak{i}\circ\xi^+_u=\xi^+_{iu}.\]
Now for all $u\in\mathcal{D}_{01}^{\mathbb{C}}$ satisfying $\mathfrak{i}\circ\rho_u=\rho_u$ we get that $u=iu$ and hence we conclude that
\[\mathfrak{i}\circ\xi^+_u=\xi^+_u.\]
Similarly we can choose an open disk $\mathcal{D}_{02}\subset\mathcal{D}_{00}$, containing $0$, such that 
\[\xi^-(\mathcal{D}_{02}\times\partial_{\infty}\Gamma)\subset\mathsf{G}/\mathsf{P}^-.\]
Now using the fact that $\mathsf{G}/\mathsf{L}$ is an open subset of $\mathsf{G}/\mathsf{P}^-\times\mathsf{G}/\mathsf{P}^+$ and the fact that the restrictions of complex analytic functions to real analytic submanifolds are real analytic we get that we can choose an open disk $\mathcal{D}_0\subset\mathcal{D}_{01}\cap\mathcal{D}_{02}$ containing 0 such that the maps $\left.\xi^\pm\right|_{\mathcal{D}_0}$ satisfies all the properties required by Theorem \ref{lanal}.
\end{proof}

\subsection{Analyticity of reparametrizations}

Let $\mathsf{U}_0\Gamma$ be the Gromov geodesic flow of the free group $\Gamma$ and let $\rho$ be an element of $\mathsf{Hom}_{\hbox{\tiny $\mathrm{M}$}}(\Gamma, \mathsf{G})$. Moreover, let $\Sigma_{\mathtt{L}(\rho)}\defeq\mathtt{L}_\rho(\Gamma)\backslash\mathbb{H}$ and $\mathsf{M}_\rho\defeq\rho(\Gamma)\backslash\mathbb{A}$. Now as $\Gamma$ is a free group we have an orbit equivalent homeomorphism between $\mathsf{U}_0\Gamma$ and $\mathsf{U}_{\hbox{\tiny $\mathrm{rec}$}}\Sigma_{\mathtt{L}(\rho)}$. Moreover, the flow on $\mathsf{U}_{\hbox{\tiny $\mathrm{rec}$}}\Sigma_{\mathtt{L}(\rho)}$ coming from the geodesic flow on $\mathsf{U}\Sigma_{\mathtt{L}(\rho)}$ is a H\"older reparametrization of the Gromov flow on $\mathsf{U}_0\Gamma$. Also from \cite{labourie invariant} and \cite{geodesic} we know that there exists an orbit equivalent homeomorphism between $\mathsf{U}_{\hbox{\tiny $\mathrm{rec}$}}\Sigma_{\mathtt{L}(\rho)}$ and $\mathsf{U}_{\hbox{\tiny $\mathrm{rec}$}}\mathsf{M}_\rho$ such that the flow on $\mathsf{U}_{\hbox{\tiny $\mathrm{rec}$}}\mathsf{M}_\rho$ coming from the affine linear flow is a H\"older reparametrization of the flow on $\mathsf{U}_{\hbox{\tiny $\mathrm{rec}$}}\Sigma_{\mathtt{L}(\rho)}$ coming from the geodesic flow on $\mathsf{U}\Sigma_{\mathtt{L}(\rho)}$. Therefore there exists an orbit equivalent homeomorphism between $\mathsf{U}_0\Gamma$ and $\mathsf{U}_{\hbox{\tiny $\mathrm{rec}$}}\mathsf{M}_\rho$ such that the affine linear flow on $\mathsf{U}_{\hbox{\tiny $\mathrm{rec}$}}\mathsf{M}_\rho$ is a H\"older reparametrization of the Gromov flow.  Hence for any $\rho\in\mathsf{Hom}_{\hbox{\tiny $\mathrm{M}$}}(\Gamma, \mathsf{G})$ we get a positive H\"older continuous map
\[\mathfrak{f}_\rho : \mathsf{U}_0\Gamma \rightarrow \mathbb{R}\] 
which gives the reparametrization. We recall that positivity follows from lemma 3 of \cite{geodesic}. We further note that for all $\gamma\in\Gamma$ we have
\[\int_\gamma\mathfrak{f}_\rho=\alpha(\rho)(\gamma).\]

\begin{proposition}\label{ranal}
Let $\{\rho_u\}_{u\in \mathcal{D}}\subset\mathsf{Hom}_{\hbox{\tiny $\mathrm{M}$}}(\Gamma, \mathsf{G})$ be a real analytic family of injective homomorphisms parametrized by an open disk $\mathcal{D}$ around 0. Then there exists an open disk $\mathcal{D}_1$ around 0, $\mathcal{D}_1\subset\mathcal{D}$ and a real analytic family 
\[\{\mathsf{f}_u : \mathsf{U}_0\Gamma \rightarrow \mathbb{R}\}_{u\in \mathcal{D}_1}\] 
of positive H\"older continuous functions such that the function $\mathsf{f}_u$ is Li\u vsic cohomologous to the function $\mathfrak{f}_{\rho_u}$ for all $u\in\mathcal{D}_1$.
\end{proposition}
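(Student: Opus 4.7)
The strategy is to construct $\mathsf{f}_u$ directly from the analytic data supplied by the limit maps of Theorem \ref{lanal} and then to recognize $\mathsf{f}_u$ as a Liv\u sic cohomology representative of $\mathfrak{f}_{\rho_u}$ by matching periods. The function $\mathfrak{f}_{\rho_u}$ itself is defined through an orbit equivalence $\mathsf{U}_0\Gamma\to\mathsf{U}_{\hbox{\tiny $\mathrm{rec}$}}\mathsf{M}_{\rho_u}$ which is only H\"older, so its raw dependence on $u$ is not transparently analytic; this is precisely why we must work modulo Liv\u sic cohomology.

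First I would apply Theorem \ref{lanal} to $\{\rho_u\}_{u\in\mathcal{D}}$, using that each $\rho_u$ is $(\mathsf{G},\mathsf{P}^\pm)$-Anosov by Proposition \ref{marano}, to obtain on a disk $\mathcal{D}'\subset\mathcal{D}$ around $0$ real analytic families of limit maps $\xi^\pm_u:\partial_\infty\Gamma\to\mathsf{G}/\mathsf{P}^\pm$ in the H\"older topology (property (5)). Composing with the neutral section produces a $\Gamma$-equivariant H\"older map
\[\nu_u\defeq\nu\circ(\xi^+_u,\xi^-_u):\partial_\infty\Gamma^{(2)}\longrightarrow\mathsf{S}^1\]
depending real analytically on $u$, while the translation cocycle $\mathtt{u}_{\rho_u}$ is analytic in $u$ by hypothesis.

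Next, I would construct a real analytic family of $\rho_u$-equivariant H\"older sections $\sigma_u:\widetilde{\mathsf{U}_0\Gamma}\to\mathbb{R}^3$ of the affine bundle $\mathcal{B}$ extending the section at $u=0$ provided by condition (4) of the Anosov definition for $\rho_0$. The approach is to adapt the transverse-analytic stable section theorem of \cite{pressure metric} (Theorem 6.5 there, already invoked in the proof of Theorem \ref{canal}) to the affine $\mathbb{R}^3$-bundle over $\mathcal{D}'\times\mathsf{U}_0\Gamma$, with the required uniform contraction along the Gromov flow coming from the Anosov splitting $\mathsf{E}^+\oplus\mathsf{E}^-$ (modulo the neutral direction, since $\mathsf{G}$ is non-semisimple). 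Adapting this machinery to the non-semisimple affine setting is the main technical obstacle of the argument.

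With these pieces in place, set $\mathsf{f}_u\defeq\langle\nabla_\psi\sigma_u\mid\nu_u\rangle$; this descends to a H\"older function on $\mathsf{U}_0\Gamma$ by $\Gamma$-invariance (because $\mathtt{L}_{\rho_u}(\gamma)\in\mathsf{SO}^0(2,1)$ preserves the Lorentzian form and $\nu_u(\gamma x,\gamma y)=\mathtt{L}_{\rho_u}(\gamma)\nu_u(x,y)$) and depends real analytically on $u$ as a H\"older function by the previous two steps. A short computation using $\rho_u$-equivariance of $\sigma_u$ together with the fact that $\mathtt{L}_{\rho_u}(\gamma)$ fixes $\nu_u(\gamma^-,\gamma^+)$ reduces the period of $\mathsf{f}_u$ on the closed orbit of $\gamma$ to $\langle\mathtt{u}_{\rho_u}(\gamma)\mid\nu_u(\gamma^-,\gamma^+)\rangle=\alpha_{\rho_u}(\gamma)$, which matches $\int_\gamma\mathfrak{f}_{\rho_u}$. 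Liv\u sic's theorem for the transitive Anosov Gromov flow then yields that $\mathsf{f}_u$ and $\mathfrak{f}_{\rho_u}$ are Liv\u sic cohomologous. Positivity of $\mathsf{f}_0$ is condition (4) of the Anosov definition, and persists on a smaller disk $\mathcal{D}_1\subset\mathcal{D}'$ around $0$ by compactness of $\mathsf{U}_0\Gamma$ and continuity in $u$.
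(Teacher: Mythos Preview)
Your overall architecture matches the paper's: invoke Theorem \ref{lanal} for analytic limit maps, build an equivariant section of an associated bundle, extract a function whose period over $\gamma$ is $\alpha_{\rho_u}(\gamma)$, and conclude by Liv\u sic. The genuine gap is in the second step. Theorem 6.5 of \cite{pressure metric} is a contraction/fixed-point argument, and it simply does not apply to the affine $\mathbb{R}^3$-bundle because of the neutral direction you yourself flag: along $\nu_u$ there is no contraction, so the iteration does not converge and no invariant section is produced. Calling this ``the main technical obstacle'' without saying how to get around it is exactly the missing idea; the non-semisimplicity of $\mathsf{G}$ is the crux of the proposition, not a cosmetic nuisance. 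A second, related gap is that you write $\nabla_\psi\sigma_u$ for a section that is only H\"older along the flow direction, so this derivative need not exist.

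The paper avoids both issues. Instead of seeking a flow-invariant section via contraction, it pulls back the affine \emph{line} bundle over $\mathsf{G}/\mathsf{L}$ whose fiber is the spacelike line cut out by $(\xi^+_u,\xi^-_u)$, and takes an \emph{arbitrary} H\"older transversely-analytic section $\sigma$ of the quotient over $\mathcal{D}_0\times\mathsf{U}_0\Gamma$; no fixed-point theorem is needed. The flow displaces $\pi_1\tilde\sigma$ along the fiber by a quantity $k_t(u,\cdot)$ satisfying an additive cocycle identity with period $\alpha_{\rho_u}(\gamma)$. Since $k_t$ is only H\"older, the paper does not differentiate it directly but smooths first, setting $\mathfrak{K}_t=\log\bigl(\int_t^{t+r}e^{k_s}\,ds\big/\int_0^r e^{k_s}\,ds\bigr)$; the derivative $\partial_t|_{t=0}\mathfrak{K}_t$ is then an explicit expression in $k_0$, $k_r$ and an integral, hence still H\"older and transversely analytic, with the same periods. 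Positivity is obtained by a further time-average $f_u^T$ together with lemmas A.1--A.2 of \cite{geodesic}, rather than by perturbing a given positive section at $u=0$.
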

\begin{proof}
We start by constructing the following line bundle:
\begin{equation}
\mathcal{B} \defeq \left\lbrace \left( (X,v),P_{X,v,v^+}, P_{X,v,v^-}\right) \mid (X,v) \in \mathsf{U}\mathbb{A} \right\rbrace
\end{equation}
over $\mathsf{G}/\mathsf{L}$. Now using proposition \ref{marano} and theorem \ref{lanal} we get that there exists a sub-disk $\mathcal{D}_0\subset\mathcal{D}$, containing 0, and $\mu$-H\"older transversely real analytic maps,
\begin{align}
(\xi^+,\xi^-) : \mathcal{D}_0\times\partial_{\infty}\Gamma^{(2)} \rightarrow \mathsf{G}/\mathsf{L}.
\end{align}
Let us consider the the projection map,
\begin{align}
\pi: \mathcal{D}_0\times\widetilde{\mathsf{U}_0\Gamma} &\rightarrow \mathcal{D}_0\times\partial_{\infty}\Gamma^{(2)}\\
\notag \left(u,(x,y,t)\right) &\mapsto \left(u,(x,y)\right)
\end{align}
and note that the map $(\xi^+,\xi^-)\circ\pi$ is $\mu$-H\"older transversely real analytic. We take the pullback of this map to define a $\mu$-H\"older transversely real analytic bundle $\tilde{\mathcal{B}} \defeq \left((\xi^+,\xi^-)\circ\pi\right)^*\mathcal{B}$ over $\mathcal{D}_0\times\widetilde{\mathsf{U}_0\Gamma}$. The free group $\Gamma$ acts on this bundle as follows:
\begin{align*}
&\gamma.\left(u,(x,y,t),\left((X,v),\xi^+_u(x,y,t), \xi^-_u(x,y,t)\right)\right)\\ 
&\defeq \left(u,\gamma.(x,y,t),\left(\left(\rho_u(\gamma)X,\mathtt{L}_{\rho_u(\gamma)}v\right),\xi^+_u(\gamma(x,y,t)), \xi^-_u(\gamma(x,y,t))\right) \right)
\end{align*}
We observe that the action of $\Gamma$ gives rise to a quotient bundle $\Gamma\backslash\tilde{\mathcal{B}}$ over $\mathcal{D}_0\times\mathsf{U}_0\Gamma$. Let $\sigma$ be a $\mu$-H\"older transversely real analytic section of this bundle and let $\tilde{\sigma}$ be its lift onto $\mathcal{D}_0\times\widetilde{\mathsf{U}_0\Gamma}$. Let $\{\tilde{\psi}_t\}_{t\in\mathbb{R}}$ be the flow on $\mathcal{D}_0\times\widetilde{\mathsf{U}_0\Gamma}$ such that $\tilde{\psi}_t\left(u,(x,y,t_0)\right)\defeq\left(u,(x,y,t+t_0)\right)$. Also let $\pi_1,\pi_2$ denote the map which sends $\left( (X,v),P_{X,v,v^+}, P_{X,v,v^-}\right)$ to $X$ and $v$ respectively. We observe that for all real number $t$
\begin{align}
\pi_1\tilde{\psi}_t^*\tilde{\sigma}(u,(x,y,t_0))= &\pi_1\tilde{\sigma}(u,(x,y,t_0))\\
\notag &+k_t(u,(x,y,t_0))\pi_2\tilde{\sigma}(u,(x,y,t_0))
\end{align}
where $k_t : \mathcal{D}_0\times\widetilde{\mathsf{U}_0\Gamma} \rightarrow \mathbb{R}$ is a $\mu$-H\"older transversely real analytic function and for all real number $t$
\begin{equation}\label{pi2}
\pi_2\tilde{\psi}_t^*\tilde{\sigma}(u,(x,y,t_0))=\pi_2\tilde{\sigma}(u,(x,y,t_0)).
\end{equation}
Let $t_\gamma$ be the period of the geodesic $\{(\gamma^-,\gamma^+,t)\mid t\in\mathbb{R}\}$ fixed by $\gamma$ in $\Gamma$. We further notice that
\begin{align*}
\mathtt{L}_{\rho_u}(\gamma)\pi_2\tilde{\sigma}(u,(\gamma^-,\gamma^+,t_0))&=\pi_2\tilde{\sigma}(u,\gamma(\gamma^-,\gamma^+,t_0))\\
&=\pi_2\tilde{\sigma}(u,(\gamma^-,\gamma^+,t_0+t_\gamma))\\
&=\pi_2\tilde{\sigma}(u,(\gamma^-,\gamma^+,t_0)).
\end{align*}
We also recall that $\pi_2\tilde{\sigma}(0,(\gamma^-,\gamma^+,t_0))=\nu_{\rho_0}\left(\gamma^-,\gamma^+\right)$. Therefore we deduce that
\begin{align}\label{limnu}
\pi_2\tilde{\sigma}(u,(\gamma^-,\gamma^+,t_0))=\nu_{\rho_u}\left(\gamma^-,\gamma^+\right).
\end{align}
Furthermore, for all real number $t_0$ and $t$ we have,
\begin{align*}
k_{t+t_\gamma}&(u,(\gamma^-,\gamma^+,t_0))\pi_2\tilde{\sigma}(u,(\gamma^-,\gamma^+,t_0))\\
= &(k_t(u,(x,y,t_0))+\alpha_{\rho_u}(\gamma))\pi_2\tilde{\sigma}(u,(\gamma^-,\gamma^+,t_0)).
\end{align*}
Therefore we get that for all real number $t_0$
\begin{equation}\label{period}
k_{t+t_\gamma}(u,(\gamma^-,\gamma^+,t_0)) = k_t(u,(\gamma^-,\gamma^+,t_0)) + \alpha_{\rho_u}(\gamma).
\end{equation}
We also note that for all real number $t$ and $t^\prime$ we have
\begin{align*}
k_{t+t^\prime}&(u,(x,y,t_0))\pi_2\tilde{\sigma}(u,(x,y,t_0))\\
=\ &k_t(u,(x,y,t_0+t^\prime))\pi_2\tilde{\sigma}(u,(x,y,t_0+t^\prime))\\
&+k_{t^\prime}(u,(x,y,t_0))\pi_2\tilde{\sigma}(u,(x,y,t_0)).
\end{align*}
And using equation \ref{pi2} we get that
\begin{align}\label{cocycle}
k_{t+t^\prime}&(u,(x,y,t_0))=k_t(u,(x,y,t_0+t^\prime))+k_{t^\prime}(u,(x,y,t_0)).
\end{align}
Now we fix some real number $r>0$ and define
\begin{align*}
\mathfrak{K}_t(u,(x,y,t_0)) \defeq \log\left(\frac{\int_t^{r+t}\exp(k_s(u,(x,y,t_0)))ds}{\int_0^{r}\exp(k_s(u,(x,y,t_0)))ds}\right).
\end{align*}
Using equation \ref{period} we get that
\begin{align}
\mathfrak{K}_{t+t_\gamma}(u,(\gamma^-,\gamma^+,t_0))= \mathfrak{K}_t(u,(\gamma^-,\gamma^+,t_0)) + \alpha_{\rho_u}(\gamma).
\end{align}
Moreover, using equation \ref{cocycle} we get that
\begin{align}\label{cocycle1}
\mathfrak{K}_{t+t^\prime}&(u,(x,y,t_0))=\mathfrak{K}_t(u,(x,y,t_0+t^\prime))+\mathfrak{K}_{t^\prime}(u,(x,y,t_0)).
\end{align}
Finally we define
\begin{align}
{f}_u(x,y,t_0) \defeq \left.\frac{\partial}{\partial t}\right|_{t=0}\mathfrak{K}_t(u,(x,y,t_0)).
\end{align}
We notice that
\begin{align*}
&\left.\frac{\partial}{\partial t}\right|_{t=0}\mathfrak{K}_t(u,(x,y,t_0))=\left.\frac{\partial}{\partial t}\right|_{t=0}\log\left(\frac{\int_t^{r+t}\exp(k_s(u,(x,y,t_0)))ds}{\int_0^{r}\exp(k_s(u,(x,y,t_0)))ds}\right)\\
&=\left.\frac{\partial}{\partial t}\right|_{t=0}\log\left(\int_t^{r+t}\exp(k_s(u,(x,y,t_0)))ds\right)\\
&=\frac{\left.\frac{\partial}{\partial t}\right|_{t=0}\int_0^t\left(\exp(k_{s+r}(u,(x,y,t_0)))- \exp(k_s(u,(x,y,t_0)))\right)ds}{\int_0^r\exp(k_s(u,(x,y,t_0)))ds}\\
&=\frac{\exp(k_r(u,(x,y,t_0)))- \exp(k_0(u,(x,y,t_0)))}{\int_0^r\exp(k_s(u,(x,y,t_0)))ds}.
\end{align*}
Therefore ${f}_u(x,y,t_0)$ is also $\mu$-H\"older transeversely real analytic. Moreover, using equation \ref{cocycle1} one gets
\begin{align*}
\left.\frac{\partial}{\partial t}\right|_{t=0}\mathfrak{K}_t(u,(x,y,t_0))=\left.\frac{\partial}{\partial t}\right|_{t=t_0}\mathfrak{K}_t(u,(x,y,0)).
\end{align*}
Hence we have
\begin{align}
\int_0^{t_\gamma}{f}_u(\gamma^-,\gamma^+,s)ds &= \int_0^{t_\gamma}\left.\frac{\partial}{\partial t}\right|_{t=s}\mathfrak{K}_t(u,(\gamma^-,\gamma^+,0))ds\\
\notag &=\mathfrak{K}_{t_\gamma}(u,(\gamma^-,\gamma^+,0))-\mathfrak{K}_0(u,(\gamma^-,\gamma^+,0))\\
\notag &=\alpha_{\rho_u}(\gamma).
\end{align}
Therefore if $u\in\mathcal{D}_0$ then
\[\int_\gamma{f}_u=\int_\gamma\mathfrak{f}_{\rho_u}\]
for all $\gamma\in\Gamma$. Now using theorem 3.3 of \cite{pressure metric} (originally proved by Li\u vsic in \cite{Liv}) we deduce that ${f}_u$ is Li\u vsic cohomologous to the positive H\"older function $\mathfrak{f}_{\rho_u}$ for all $u\in\mathcal{D}_0$. Therefore for any flow invariant measure $\mathfrak{m}$ on $\mathsf{U}_0\Gamma$ we have
\[\int{f}_u\mathsf{d}\mathfrak{m}=\int\mathfrak{f}_{\rho_u}\mathsf{d}\mathfrak{m}>0.\]
Now using lemma A.1 and lemma A.2 of \cite{geodesic} and transverse analyticity of ${f}_u$ we derive that there exists a neighborhood $\mathcal{D}_1\subset\mathcal{D}_0$ and there exists a real number $T>0$ such that for all $u\in\mathcal{D}_1$
\[{f}^T_u(x,y,t_0)\defeq\frac{1}{T}\int_{0}^{T}{f}_u(x,y,t_0+s)ds>0.\]
Now we finish our proof by considering the collection 
\[\{\mathsf{f}_u\defeq{f}^T_u\mid u\in\mathcal{D}_1\}\]
and noticing that it satisfies all the required properties.
\end{proof}

\subsection{Deformation of the cross ratio}\label{cross}

In this section we obtain a formula for the variation of the cross ratio which is similar in taste to the theorem \ref{gm}. We start by stating an alternative version of the proposition 10.4 from \cite{pressure metric}.
\begin{proposition}{[Bridgeman, Canary, Labourie, Sambarino]}\label{cratio}
Let $\varrho$ be an element of $\mathsf{Hom}_{\hbox{\tiny $\mathrm{S}$}}(\Gamma, \mathsf{SO}^0(2,1))$. Then
\[\lim_{n\to\infty}\left(\ell_\varrho(\gamma^n\eta^n) - \ell_\varrho(\gamma^n)- \ell_\varrho(\eta^n)\right) = \log\mathsf{b}_\varrho(\eta^-,\gamma^-,\gamma^+,\eta^+)\]
where $\ell_\rho(\gamma)$ is the length of the closed geodesic corresponding to $\varrho(\gamma)$.
\end{proposition}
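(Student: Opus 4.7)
The plan is to expand $\varrho(\gamma^n\eta^n)$ asymptotically in the eigenbases of $\varrho(\gamma)$ and $\varrho(\eta)$, identify the leading coefficient, and match it to the cross-ratio $\mathsf{b}$. Set $\gamma_0:=\varrho(\gamma)$ and $\eta_0:=\varrho(\eta)$. Since a hyperbolic element $g\in\mathsf{SO}^0(2,1)$ has spectrum $\{e^{\ell(g)},1,e^{-\ell(g)}\}$, one has $\ell_\varrho(\gamma^n\eta^n)=\log\lambda_1(\gamma_0^n\eta_0^n)$, where $\lambda_1$ denotes the largest eigenvalue. For each of $\gamma_0$ and $\eta_0$ I would fix null eigenvectors $f^\pm_\bullet$ with eigenvalues $e^{\pm\ell_\varrho(\bullet)}$, normalised so that the triple $\{f^+_\bullet,\nu(\bullet^-,\bullet^+),f^-_\bullet\}$ is an ordered basis of $\mathbb{R}^{2,1}$ with prescribed pairings; then the matrix of $\gamma_0^n$ in its own basis is $\operatorname{diag}(e^{n\ell_\varrho(\gamma)},1,e^{-n\ell_\varrho(\gamma)})$ and similarly for $\eta_0^n$.

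Next, I would decompose the $\eta$-eigenvectors in the $\gamma$-basis: write $f^+_\eta=a^+f^+_\gamma+a^0\nu(\gamma^-,\gamma^+)+a^-f^-_\gamma$, and dually let $b^+$ denote the $f^+_\eta$-component of $f^+_\gamma$ in the $\eta$-basis. For large $n$ the operator $\eta_0^n$ behaves as a rank-one projection onto $f^+_\eta$, with the other eigendirections contracted by factors of $e^{-n\ell_\varrho(\eta)}$ or smaller, and then applying $\gamma_0^n$ to $f^+_\eta$ produces $e^{n\ell_\varrho(\gamma)}a^+f^+_\gamma$ to leading order. Tracking this leading term yields
\[
\lambda_1(\gamma_0^n\eta_0^n)=e^{n(\ell_\varrho(\gamma)+\ell_\varrho(\eta))}\,a^+b^+\,\bigl(1+O(e^{-cn})\bigr)
\]
for some $c>0$ governed by the spectral gaps of $\gamma_0$ and $\eta_0$.

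The main obstacle is to identify the constant $C:=a^+b^+$ with $\mathsf{b}_\varrho(\eta^-,\gamma^-,\gamma^+,\eta^+)$. My approach would be to use identity \ref{b2}, which expresses $\nu(a,d)$ as a $\mathsf{b}$-weighted combination of $\nu(a,b)$ and $\nu(a,c)$, in order to expand both $\nu(\gamma^-,\gamma^+)$ and $\nu(\eta^-,\eta^+)$ in terms of the null vectors $f^\pm_\gamma,f^\pm_\eta$. Substituting these expansions into the definition $\mathsf{b}(a,b,c,d)=\tfrac{1}{2}(1+\langle\nu(a,d),\nu(b,c)\rangle)$ and simplifying using \ref{b1} and \ref{b3} should reduce the Lorentzian pairing to $a^+b^+$ once the normalisation constants cancel. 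A cleaner but less intrinsic alternative is to work through the spin double cover $\mathsf{SL}(2,\mathbb{R})\to\mathsf{SO}^0(2,1)$, where hyperbolic elements take the diagonal form $\operatorname{diag}(e^{\ell/2},e^{-\ell/2})$ after conjugation and the classical cross-ratio of the fixed points on $\partial_\infty\mathbb{H}$ gives the constant directly.

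Taking logarithms in the asymptotic formula yields $\ell_\varrho(\gamma^n\eta^n)-n\ell_\varrho(\gamma)-n\ell_\varrho(\eta)=\log C+o(1)$, and letting $n\to\infty$ gives the stated identity once the cross-ratio identification of $C$ has been carried out.
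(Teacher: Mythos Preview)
The paper does not prove this proposition at all: it is quoted, without argument, as ``an alternative version of proposition~10.4'' from Bridgeman--Canary--Labourie--Sambarino~\cite{pressure metric}. So there is no proof in the paper to compare your attempt against.

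Your spectral strategy is sound and is essentially how such identities are established. The asymptotic $\lambda_1(\gamma_0^n\eta_0^n)=e^{n(\ell_\varrho(\gamma)+\ell_\varrho(\eta))}\,\operatorname{tr}(P^+_\gamma P^+_\eta)\,(1+O(e^{-cn}))$ is correct, where $P^+_\bullet$ denotes the spectral projector onto the top eigenline, and with the normalisation $\langle f^+_\bullet,f^-_\bullet\rangle=-1$ one has $\operatorname{tr}(P^+_\gamma P^+_\eta)=\langle f^-_\eta,f^+_\gamma\rangle\langle f^-_\gamma,f^+_\eta\rangle=a^+b^+$ in your notation. The $\mathsf{SL}(2,\mathbb{R})$ route you mention works equally well and is arguably cleaner.

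One genuine point of caution on what you call the ``main obstacle''. If you carry out the identification carefully with the paper's own cross-ratio $\mathsf{b}(a,b,c,d)=\tfrac12(1+\langle\nu(a,d),\nu(b,c)\rangle)$, you will find
\[
a^+b^+ \;=\; \frac{\langle f^-_\eta,f^+_\gamma\rangle\langle f^-_\gamma,f^+_\eta\rangle}{\langle f^-_\eta,f^+_\eta\rangle\langle f^-_\gamma,f^+_\gamma\rangle}\;=\;\mathsf{b}_\varrho(\eta^-,\gamma^-,\gamma^+,\eta^+)^{\,2},
\]
since the Lorentzian pairing of null vectors representing boundary points $x,y$ is proportional to $(x-y)^2$ rather than $(x-y)$. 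Consequently your computation yields $2\log\mathsf{b}_\varrho$ rather than $\log\mathsf{b}_\varrho$; the same factor appears in the $\mathsf{SL}(2,\mathbb{R})$ calculation. The discrepancy is a normalisation artefact (in \cite{pressure metric} the length is $\log(\lambda_1/\lambda_d)$ and the cross-ratio is defined in the projective Anosov framework, which in rank one rescales both sides consistently). It is immaterial for the paper's applications, which only use the vanishing of $\frac{d}{dt}\log\mathsf{b}_{\varrho_t}$ in Lemma~\ref{z2}, but you should not expect identity~\ref{b2} alone to deliver $C=\mathsf{b}$ on the nose.
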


\begin{lemma}\label{x0}
Let $\{\rho_t\}$ be a smooth path in $\mathsf{Hom}_{\hbox{\tiny $\mathrm{M}$}}(\Gamma, \mathsf{G})$. Then the following holds
\[\lim_{n \to \infty} \left.\frac{d}{dt}\right|_{t = 0} \nu_{\rho_t}\left((\gamma^n\eta^n)^-,(\gamma^n\eta^n)^+\right)= \left.\frac{d}{dt}\right|_{t = 0} \nu_{\rho_t}\left(\eta^-,\gamma^+\right).\]
Moreover, the rate of convergence is exponential.
\end{lemma}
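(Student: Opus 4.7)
The strategy is to reparametrize the moving endpoints $(\gamma^n\eta^n)^{\pm}\in\partial_{\infty}\mathbb{H}$ by fixed abstract endpoints in $\partial_{\infty}\Gamma$ via the Hölder limit map of the linear part, apply the chain rule to separate $t$-derivatives of the limit map from $x$-derivatives of the neutral section $\nu$, and then exploit Hölder regularity of the $t$-derivative together with exponential convergence of endpoints in $\partial_{\infty}\Gamma$.

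\smallskip

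First, I would apply Theorem \ref{lanal} to the family $\{\mathtt{L}_{\rho_t}\}\subset\mathsf{Hom}_{\hbox{\tiny $\mathrm{S}$}}(\Gamma,\mathsf{SO}^0(2,1))$. Since only the first-order data at $t=0$ is needed, we may approximate the smooth path by a real-analytic one with the same $1$-jet at $t=0$. The theorem, applied with the target group being $\mathsf{SO}^0(2,1)$ so that limit maps land in $\partial_{\infty}\mathbb{H}$, then yields on a neighborhood of $t=0$ a family
\[\iota_t : \partial_{\infty}\Gamma \longrightarrow \partial_{\infty}\mathbb{H},\]
which is $\mu$-Hölder in $x$ and real-analytic in $t$, satisfying $\iota_t(\gamma^{\pm})=(\mathtt{L}_{\rho_t}(\gamma))^{\pm}$ for every infinite-order $\gamma\in\Gamma$. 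By property (4) of Theorem \ref{lanal}, the derivative $\partial_t\iota_0$ is itself $\mu$-Hölder on $\partial_{\infty}\Gamma$. Writing $\nu_{\rho_t}(a,b)=\nu(\iota_t(a),\iota_t(b))$ for distinct $a,b\in\partial_{\infty}\Gamma$, with $\nu$ the real-analytic neutral section on $\partial_{\infty}\mathbb{H}\times\partial_{\infty}\mathbb{H}\setminus\Delta$, the chain rule gives
\[\frac{d}{dt}\Big|_{t=0}\nu_{\rho_t}(a,b) = D_1\nu(\iota_0(a),\iota_0(b))\,\partial_t\iota_0(a) + D_2\nu(\iota_0(a),\iota_0(b))\,\partial_t\iota_0(b).\]

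\smallskip

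Next, I would invoke the North--South dynamics of infinite-order elements on the Gromov boundary of the word-hyperbolic (in fact Schottky) group $\Gamma$: a standard ping-pong argument yields $\lambda\in(0,1)$ and a visual metric $d_{\infty}$ on $\partial_{\infty}\Gamma$ with
\[d_{\infty}((\gamma^n\eta^n)^+,\gamma^+) = O(\lambda^n),\qquad d_{\infty}((\gamma^n\eta^n)^-,\eta^-) = O(\lambda^n).\]
Substituting $a=(\gamma^n\eta^n)^-$ and $b=(\gamma^n\eta^n)^+$ into the chain-rule formula above, the $\mu$-Hölder estimates on $\iota_0$ and $\partial_t\iota_0$, together with real-analyticity of $D_i\nu$ on its open domain, convert the $O(\lambda^n)$ boundary estimate into an $O(\lambda^{\mu n})$ bound on each factor. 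Assembling, the displayed expression converges exponentially in $n$ to its value at $(\eta^-,\gamma^+)$, which by the same chain-rule identity equals $\frac{d}{dt}|_{t=0}\nu_{\rho_t}(\eta^-,\gamma^+)$, as required.

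\smallskip

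\textbf{Expected main obstacle.} The genuinely non-trivial ingredient is the $\mu$-Hölder regularity of $\partial_t\iota_0$ on all of $\partial_{\infty}\Gamma$, as opposed to mere pointwise smoothness in $t$; this is exactly what property (4) of Theorem \ref{lanal} provides. Once in hand, the argument is a routine chain-rule calculation tracking the Hölder exponent through the composition.
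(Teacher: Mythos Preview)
Your argument is correct and proceeds along the same overall strategy as the paper: reduce to the transversely analytic limit maps furnished by Theorem~\ref{lanal}/\ref{canal}, use the exponential convergence $(\gamma^n\eta^n)^-\to\eta^-$, $(\gamma^n\eta^n)^+\to\gamma^+$ in $\partial_\infty\Gamma$, and push the exponential rate through the H\"older limit map. The difference lies in how you pass from convergence of the limit maps to convergence of their $t$-derivatives. The paper complexifies the path, invokes Theorem~\ref{canal} to realise $\xi^\pm((\gamma^n\eta^n)^\mp)$ as a sequence of \emph{holomorphic} functions on a complex disk converging exponentially, and then applies Cauchy's integral formula to deduce exponential convergence of the $t$-derivatives; at the end it recovers $\nu_\rho$ via the projection $\pi_2\circ(\xi^+_\rho,\xi^-_\rho)$. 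You instead stay real, invoke property~(4) directly to conclude that $x\mapsto\partial_t\iota_0(x)$ is $\mu$-H\"older, and run a chain-rule computation through the analytic map $\nu$. Your route is arguably more direct, since property~(4) already encodes the uniform H\"older control on derivatives that the paper re-derives via Cauchy. Two small points to tidy: Theorem~\ref{lanal} is stated for $\mathsf{G}=\mathsf{SO}^0(2,1)\ltimes\mathbb{R}^3$, not for $\mathsf{SO}^0(2,1)$, so either cite the semisimple version from \cite{pressure metric} for the linear part, or apply Theorem~\ref{lanal} to $\rho_t$ itself and project $\mathsf{G}/\mathsf{P}^\pm\to\partial_\infty\mathbb{H}$; and note that the paper's own proof also tacitly replaces the smooth path by an analytic one, so your explicit $1$-jet reduction is a welcome clarification rather than an extra hypothesis.
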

\begin{proof}
As $\{\rho_t\}$ is a path in $\mathsf{Hom}_{\hbox{\tiny $\mathrm{M}$}}(\Gamma, \mathsf{G})$ we can consider it as a path in $\{\rho_u\}_{u\in\mathcal{D}}$, a complex analytic family in $\mathsf{Hom}(\Gamma, \mathsf{G}_\mathbb{C})$ parametrized by a complex disk $\mathcal{D}$ around $0$. Using theorem \ref{canal} we get that the limit maps $\xi^+$ and $\xi^-$ are $\mu$-H\"older transversely complex analytic. Hence 
\begin{align*}
\left\{\xi^+\left((\gamma^n\eta^n)^-\right)\right\}_{n\in\mathbb{N}}
\end{align*}
is a sequence of complex analytic maps converging to $\xi^+\left(\eta^-\right)$ on $\mathcal{D}$. Moreover, as $(\gamma^n\eta^n)^-$ converges to $\eta^-$ at an exponential rate and the limit map $\xi^+$ is $\mu$-H\"older we get that the rate of convergence is exponential. Now as
\[\left\{\xi^+\left((\gamma^n\eta^n)^-\right)\right\}_{n\in\mathbb{N}}\]
is a sequence of complex analytic functions on $\mathcal{D}$ converging exponentially to $\xi^+\left(\eta^-\right)$, using Cauchy's integral formula we get that the derivative of the sequence is also converging exponentially to the derivative of $\xi^+\left(\eta^-\right)$. Now restricting the limit maps on the real part we get that
\begin{align*}
\lim_{n \to \infty} \left.\frac{d}{dt}\right|_{t = 0} \xi^+_{\rho_t}\left((\gamma^n\eta^n)^-\right) = \left.\frac{d}{dt}\right|_{t = 0} \xi^+_{\rho_t}\left(\eta^-\right)
\end{align*}
with the convergence rate being exponential. Similarly we get that
\begin{align*}
\lim_{n \to \infty} \left.\frac{d}{dt}\right|_{t = 0} \xi^-_{\rho_t}\left((\gamma^n\eta^n)^+\right) = \left.\frac{d}{dt}\right|_{t = 0} \xi^-_{\rho_t}\left(\gamma^+\right)
\end{align*}
where the convergence rate is exponential.

Let $\tilde{\pi}_2$ be the projection from $\mathsf{U}\mathbb{A}$ onto $\mathsf{S}^{1}$. We note that $\tilde{\pi}_2$ gives rise to a projection map
\[\pi_2: \mathsf{G}/\mathsf{L}\longrightarrow\mathsf{S}^{1}.\]   
We conclude our proof by recalling from equation \ref{limnu} that
\[\nu_\rho\left(\eta^-,\gamma^+\right) = \pi_2\circ(\xi^+_\rho,\xi^-_\rho)\left(\eta^-,\gamma^+\right).\]
\end{proof}
\begin{proposition}\label{main}
Let $\{\rho_t\}$ be a smooth path in $\mathsf{Hom}_{\hbox{\tiny $\mathrm{M}$}}(\Gamma, \mathsf{G})$. Also let $X_{\rho_t(\gamma)}$ be any point on the unique affine line fixed by $\rho_t(\gamma)$ where $\gamma$ is in $\Gamma$. Then for all coprime $\gamma, \eta$ in $\Gamma$ we have 
\begin{align*}
&\lim_{n \to \infty} \left(\alpha_{\rho_t}(\gamma^n\eta^n) - \alpha_{\rho_t}(\gamma^n) - \alpha_{\rho_t}(\eta^n) \right)\\ 
\notag&= \left\langle X_{\rho_t(\gamma)} - X_{\rho_t(\eta)}\mid\nu_{\rho_t}\left(\eta^-, \gamma^+\right)+\nu_{\rho_t}\left(\eta^+, \gamma^-\right)\right\rangle,\\
&\left.\frac{d}{dt}\right|_{t = 0} \left\langle X_{\rho_t(\gamma)} - X_{\rho_t(\eta)}\mid\nu_{\rho_t}\left(\eta^-, \gamma^+\right)+\nu_{\rho_t}\left(\eta^+, \gamma^-\right)\right\rangle\\
\notag&=\lim_{n \to \infty} \left.\frac{d}{dt}\right|_{t = 0} \left(\alpha_{\rho_t}(\gamma^n\eta^n) - \alpha_{\rho_t}(\gamma^n) - \alpha_{\rho_t}(\eta^n) \right).
\end{align*}
\end{proposition}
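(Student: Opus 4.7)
The plan is to re-express $\alpha_\rho(\gamma^n\eta^n)$ so that the $O(n)$ divergent terms cancel explicitly against $\alpha_\rho(\gamma^n) + \alpha_\rho(\eta^n) = n\alpha_\rho(\gamma) + n\alpha_\rho(\eta)$, leaving only a bounded remainder. The basic tool is the identity
\[\alpha_\rho(g) = \langle \rho(g) Y - Y \mid \nu_\rho(g^-, g^+)\rangle,\]
valid for \emph{any} $Y \in \mathbb{A}$ because $\mathtt{L}_\rho(g)$ fixes $\nu_\rho(g^-,g^+)$ and preserves the Lorentz form, so $\langle (\mathtt{L}_\rho(g)-I)Y \mid \nu_\rho \rangle = 0$. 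Applying it with $Y = X_{\rho(\gamma)}$, expanding $\rho(\gamma^n\eta^n) X_{\rho(\gamma)}$ via the axis formulas $\rho(\gamma^n) X_{\rho(\gamma)} = X_{\rho(\gamma)} + n\alpha_\rho(\gamma)\nu_\rho(\gamma^-,\gamma^+)$ and $\rho(\eta^n) X_{\rho(\eta)} = X_{\rho(\eta)} + n\alpha_\rho(\eta)\nu_\rho(\eta^-,\eta^+)$, then pairing with $\nu_n \defeq \nu_\rho((\gamma^n\eta^n)^-, (\gamma^n\eta^n)^+)$ while using both that $\nu_n$ is fixed by $\mathtt{L}_\rho(\gamma^n\eta^n)$ and the Lorentz-adjointness $\langle \mathtt{L}_\rho(g)u \mid v\rangle = \langle u \mid \mathtt{L}_\rho(g^{-1}) v\rangle$, I arrive at
\begin{align*}
\alpha_\rho(\gamma^n\eta^n) = &\ n\alpha_\rho(\gamma)\langle \nu_\rho(\gamma^-,\gamma^+)\mid\nu_n\rangle + n\alpha_\rho(\eta)\langle \nu_\rho(\eta^-,\eta^+)\mid \mathtt{L}_\rho(\gamma^{-n})\nu_n\rangle \\
&+ \langle X_{\rho(\eta)} - X_{\rho(\gamma)}\mid \mathtt{L}_\rho(\gamma^{-n})\nu_n - \nu_n\rangle.
\end{align*}

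The critical observation is that $\eta^n\gamma^n = \gamma^{-n}(\gamma^n\eta^n)\gamma^n$, so the $\mathsf{SO}^0(2,1)$-equivariance of $\nu$ gives the \emph{exact} equality $\mathtt{L}_\rho(\gamma^{-n})\nu_n = \nu_\rho((\eta^n\gamma^n)^-, (\eta^n\gamma^n)^+)$. North--south dynamics then yield $\nu_n \to \nu_\rho(\eta^-,\gamma^+)$ and $\mathtt{L}_\rho(\gamma^{-n})\nu_n \to \nu_\rho(\gamma^-,\eta^+)$ at exponential rates. Combined with the identity $\langle \nu(a,b)\mid \nu(a,c)\rangle = 1$, which yields $\langle \nu_\rho(\gamma^-,\gamma^+)\mid \nu_\rho(\eta^-,\gamma^+)\rangle = \langle \nu_\rho(\eta^-,\eta^+)\mid \nu_\rho(\gamma^-,\eta^+)\rangle = 1$, the two $n\cdot(\langle\cdot\mid\cdot\rangle - 1)$ terms are of type linear-times-exponentially-small and vanish in the limit once one subtracts $n\alpha_\rho(\gamma) + n\alpha_\rho(\eta)$, while the third term converges to $\langle X_{\rho(\eta)} - X_{\rho(\gamma)}\mid \nu_\rho(\gamma^-,\eta^+) - \nu_\rho(\eta^-,\gamma^+)\rangle$. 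The antisymmetry $\nu(a,b) = -\nu(b,a)$ rewrites this as the claimed right-hand side, and a short computation using the same two identities shows that it is insensitive to the choices of $X_{\rho(\gamma)}, X_{\rho(\eta)}$ along their respective fixed axes.

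For the second identity, the plan is to upgrade the entire argument to the complex analytic setting, exactly as in the proof of Lemma \ref{x0}: embed $\{\rho_t\}$ in a complex analytic family $\{\rho_u\}_{u\in\mathcal{D}^{\mathbb{C}}} \subset \mathsf{Hom}(\Gamma,\mathsf{G}_{\mathbb{C}})$. By Theorem \ref{canal} the limit maps are H\"older in the boundary variable and complex analytic in $u$, with uniform H\"older bounds on compact subdisks. Each of the three terms in the formula above for $A_n(u) \defeq \alpha_{\rho_u}(\gamma^n\eta^n) - \alpha_{\rho_u}(\gamma^n) - \alpha_{\rho_u}(\eta^n)$ is therefore complex analytic on a small disk around $0$ and converges uniformly on compact subdisks, at an exponential rate, to the corresponding term of $B(u) \defeq \langle X_{\rho_u(\gamma)} - X_{\rho_u(\eta)}\mid\nu_{\rho_u}(\eta^-,\gamma^+) + \nu_{\rho_u}(\eta^+,\gamma^-)\rangle$. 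Cauchy's integral formula, invoked just as in Lemma \ref{x0}, then yields $A_n'(0) \to B'(0)$, which is the second identity. The main obstacle will be the algebraic bookkeeping in the first step, and in particular tracking the exact cancellation of the $O(n)$ divergences after the identification $\mathtt{L}_\rho(\gamma^{-n})\nu_n = \nu_\rho((\eta^n\gamma^n)^-, (\eta^n\gamma^n)^+)$; once the limit identity is in hand, the derivative formula follows essentially formally from the uniform-in-$u$ analyticity machinery already developed in the paper.
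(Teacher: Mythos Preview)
Your argument is correct, and the overall shape---expand $\alpha_\rho(\gamma^n\eta^n)$ via the basepoint-free identity $\alpha_\rho(g)=\langle\rho(g)Y-Y\mid\nu_\rho(g)\rangle$, isolate the divergent $n\alpha_\rho(\gamma)+n\alpha_\rho(\eta)$ piece, and pass to the limit---matches the paper's. The decomposition, however, is genuinely different. The paper follows Charette--Drumm: it chooses a specific geometric basepoint $Q_\rho$ as the intersection of $l_{\rho(\eta)}$ with the affine null plane $l^-_{\rho(\gamma)}$ through $l_{\rho(\gamma)}$, together with a companion point $R\in l_{\rho(\gamma)}$, arranged so that $Q_\rho-R$ is a \emph{contracting eigenvector} of $\mathtt{L}_\rho(\gamma)$; the remainder term is then $(\lambda_\rho(\gamma)^n-1)\langle Q_\rho-R\mid\nu_n\rangle$, which converges for the trivial reason $|\lambda_\rho(\gamma)|<1$, and a final algebraic step identifies $\langle R-Q_\rho\mid\nu_\rho(\eta^-,\gamma^+)\rangle$ with the symmetric expression in arbitrary axis points. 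You instead take $Y=X_{\rho(\gamma)}$ directly and replace the eigenvector trick by the conjugation identity $\mathtt{L}_\rho(\gamma^{-n})\nu_n=\nu_\rho\bigl((\eta^n\gamma^n)^-,(\eta^n\gamma^n)^+\bigr)$, so that the exponential decay comes from north--south dynamics on both $\nu_n$ and its conjugate simultaneously. Your route is more symmetric in $\gamma,\eta$ and avoids the auxiliary construction of the null plane and its intersection point, at the cost of needing the conjugation observation; the paper's route makes the exponential decay of the remainder completely explicit via a single eigenvalue. For the second identity both proofs coincide: complexify, invoke the transversely holomorphic limit maps (your appeal to Theorem~\ref{canal} is exactly the content of Lemma~\ref{x0}), and apply Cauchy's integral formula to promote uniform convergence to convergence of derivatives.
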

\begin{proof}
We begin the proof by mentioning that the first identity is a variation of an identity worked out by Charette--Drumm in page 12 of \cite{cd}. In fact I use the same method used by them to compute both the identities.

Let ${l}_{\rho(\eta)}$ be the unique affine line fixed by $\rho(\eta)$ and let $\nu_{\rho}\left(\gamma^-, \gamma^+\right)^\perp$ be the plane which is perpendicular to the unit vector $\nu_{\rho}\left(\gamma^-, \gamma^+\right)$ in the Lorentzian metric. We note that $v^\perp\cap\mathcal{C}$ is a disjoint union of two half lines where $\mathcal{C}$ is the upper half of $\mathsf{S}^0\backslash\{0\}$. We choose $v^\pm\in v^\perp\cap\mathcal{C}$ such that $(v^+,v,v^-)$ gives the same orientation as $(v_0^+,v_0,v_0^-)$. We denote the affine plane which is parallel to the plane generated by $v^-$ and $\nu_{\rho}\left(\gamma^-, \gamma^+\right)$ and which contains ${l}_{\rho(\gamma)}$ by ${l}^-_{\rho(\gamma)}$. As $\gamma$ and $\eta$ are coprime we have that ${l}_{\rho(\eta)}$ intersects ${l}^-_{\rho(\gamma)}$ in a unique point $Q_\rho$. Also let $R$ be the point on ${l}_{\rho(\gamma)}$ such that 
\[\langle R-Q_\rho,\nu_\rho(\gamma)\rangle = 0\]
where $\nu_\rho(\gamma)\defeq\nu_\rho\left(\gamma^-, \gamma^+\right)$. We note that as $Q_\rho\in{l}_{\rho(\eta)}$ we have
\[Q_\rho - \rho(\eta)^{-n}Q_\rho=\alpha_\rho(\eta^n)\nu_\rho(\eta)\]
and as $R\in{l}_{\rho(\gamma)}$ we have
\[\rho(\gamma)^nR - R = \alpha_{\rho}(\gamma^n)\nu_\rho(\gamma).\]
\begin{center}
\text{}\\
\includegraphics[height=6cm]{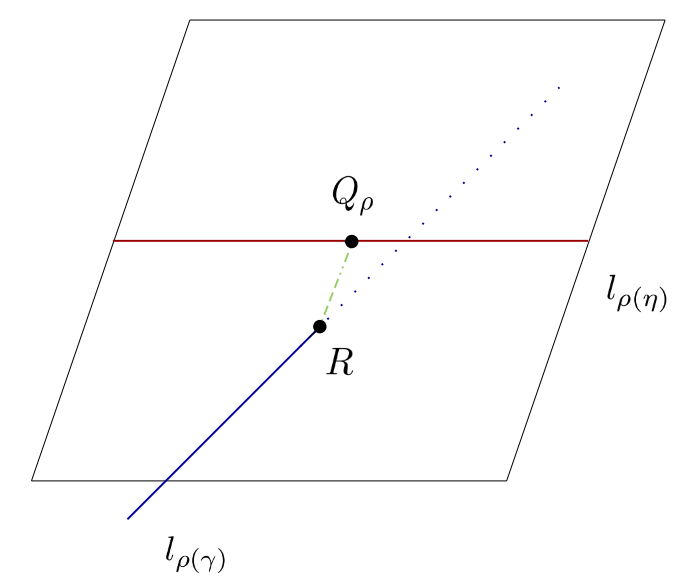}
\end{center}
Now we observe that
\begin{align*}
\alpha_\rho(\gamma^n\eta^n) =&\ \langle\rho(\gamma)^nQ_\rho-\rho(\eta)^{-n}Q_\rho\mid \nu_\rho(\gamma^n\eta^n)\rangle\\
=&\ \langle\rho(\gamma)^nQ_\rho-\rho(\gamma)^nR-(Q_\rho-R)\mid \nu_\rho(\gamma^n\eta^n)\rangle \\
&+\ \langle (Q_\rho - \rho(\eta)^{-n}Q_\rho) + (\rho(\gamma)^nR - R) \mid \nu_\rho(\gamma^n\eta^n)\rangle\\
 =&\ \langle\left(\mathtt{L}_\rho(\gamma)^n-\mathbb{I}\right)(Q_\rho-R)\mid \nu_\rho(\gamma^n\eta^n)\rangle\\
 &+ \langle \alpha_{\rho}(\gamma^n)\nu_\rho(\gamma) + \alpha_{\rho}(\eta^n)\nu_\rho(\eta)\mid \nu_\rho(\gamma^n\eta^n)\rangle .
\end{align*}
We observe that the vector $(Q_\rho-R)$ is an eigenvector of $\mathtt{L}_\rho(\gamma)$ with eigenvalue $\lambda_\rho(\gamma)$ such that $|\lambda_\rho(\gamma)|<1$. Therefore we get that
\begin{align*}
\alpha_{\rho}(\gamma^n\eta^n) =&\ \left(\lambda_\rho(\gamma)^n-1\right)\langle Q_\rho-R\mid \nu_\rho(\gamma^n\eta^n)\rangle\\
 &+ \langle \alpha_\rho(\gamma^n)\nu_\rho(\gamma) + \alpha_\rho(\eta^n)\nu_\rho(\eta)\mid \nu_\rho(\gamma^n\eta^n)\rangle.
\end{align*}
We recall that
\[\left\langle \nu_\rho(\gamma)\mid \nu_\rho(\eta^-, \gamma^+)\right\rangle = 1 = \left\langle \nu_\rho(\eta)\mid \nu_\rho(\eta^-, \gamma^+)\right\rangle.\]
Hence we get
\begin{align*}
\alpha&_\rho(\gamma^n\eta^n)-\alpha_\rho(\gamma^n)-\alpha_\rho(\eta^n)\\
=&\ \left(\lambda_\rho(\gamma)^n-1\right)\langle Q_\rho-R\mid \nu_\rho(\gamma^n\eta^n)\rangle\\
 &+ \alpha_\rho(\gamma^n)\langle \nu_\rho(\gamma)\mid \nu_\rho(\gamma^n\eta^n)-\nu_\rho(\eta^-, \gamma^+)\rangle\\
 &+ \alpha_\rho(\eta^n)\langle \nu_\rho(\eta)\mid \nu_\rho(\gamma^n\eta^n)-\nu_\rho(\eta^-, \gamma^+)\rangle.
\end{align*}
Now using the fact that $\nu_\rho(\gamma^n\eta^n)$ converges exponentially to $\nu_\rho(\eta^-, \gamma^+)$, while $\alpha_\rho(\gamma^n)$ has polynomial growth and the fact that $|\lambda_\rho(\gamma)|<1$ we obtain
\begin{align*}
\lim_{n\to\infty}(\alpha_\rho(\gamma^n\eta^n)-\alpha_\rho(\gamma^n)-\alpha_\rho(\eta^n)) = -\langle Q_\rho-R \mid \nu_\rho(\eta^-, \gamma^+)\rangle.
\end{align*}
Moreover, using lemma \ref{x0} and the fact that $|\lambda_\rho(\gamma)|<1$ we deduce that 
\begin{align*}
&\lim_{n\to\infty}\left.\frac{d}{dt}\right|_{t = 0}(\alpha_{\rho_t}(\gamma^n\eta^n)-\alpha_{\rho_t}(\gamma^n)-\alpha_{\rho_t}(\eta^n))\\
 &= -\left.\frac{d}{dt}\right|_{t = 0}\langle Q_{\rho_t}-R\mid \nu_{\rho_t}(\eta^-, \gamma^+)\rangle.
\end{align*}
Finally, we conclude by observing that
\begin{align*}
&\langle R- Q_\rho \mid \nu_\rho(\eta^-, \gamma^+)\rangle = \langle X_{\rho(\gamma)}-X_{\rho(\eta)}\mid\nu_\rho(\eta^-, \gamma^+)+\nu_\rho(\eta^+, \gamma^-)\rangle
\end{align*}
where $X_{\rho(\gamma)}\in{l}_{\rho(\gamma)}$ and $X_{\rho(\eta)}\in{l}_{\rho(\eta)}$ are any two points for $\gamma,\eta\in\Gamma$.
\end{proof}
\begin{theorem}\label{dcr}
Let $\{\varrho_t\}$ be a smooth path in $\mathsf{Hom}_{\hbox{\tiny $\mathrm{S}$}}(\Gamma, \mathsf{SO}^0(2,1))$ such that $\rho\defeq(\varrho_0,\dot{\varrho}_0)\in\mathsf{Hom}_{\hbox{\tiny $\mathrm{M}$}}(\Gamma, \mathsf{G})$ where $\dot{\varrho}_0\defeq\left.\frac{d}{dt}\right|_{t=0}\varrho_t$. Then we have
\begin{align*}
\langle &X_{\rho(\gamma)}-X_{\rho(\eta)}\mid\nu_\rho(\eta^-, \gamma^+)+\nu_\rho(\eta^+, \gamma^-)\rangle\\
&=\left.\frac{d}{dt}\right|_{t=0}\log\mathsf{b}_{\varrho_t}(\eta^-,\gamma^-,\gamma^+,\eta^+)
\end{align*}
where $X_{\rho(\gamma)}$ is any point on the unique affine line fixed by $\rho(\gamma)$ and $X_{\rho(\eta)}$ is any point on the unique affine line fixed by $\rho(\eta)$.
\end{theorem}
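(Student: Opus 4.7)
The strategy is to connect the three pre-established ingredients: Proposition \ref{cratio} (which writes $\log \mathsf{b}_\varrho$ as a limit of length differences), Theorem \ref{gm} (Goldman--Margulis, identifying the derivative of length with the Margulis invariant), and the first identity of Proposition \ref{main} (which relates the asymptotic defect of the Margulis invariant to the bracket on the right--hand side). Granted an interchange of $\lim_{n\to\infty}$ with $\frac{d}{dt}\big|_{t=0}$, the chain is:
\begin{align*}
\left.\tfrac{d}{dt}\right|_{t=0}\log\mathsf{b}_{\varrho_t}(\eta^-,\gamma^-,\gamma^+,\eta^+)
&=\left.\tfrac{d}{dt}\right|_{t=0}\lim_{n\to\infty}\bigl(\ell_{\varrho_t}(\gamma^n\eta^n)-\ell_{\varrho_t}(\gamma^n)-\ell_{\varrho_t}(\eta^n)\bigr)\\
&=\lim_{n\to\infty}\bigl(\alpha_\rho(\gamma^n\eta^n)-\alpha_\rho(\gamma^n)-\alpha_\rho(\eta^n)\bigr)\\
&=\langle X_{\rho(\gamma)}-X_{\rho(\eta)}\mid\nu_\rho(\eta^-,\gamma^+)+\nu_\rho(\eta^+,\gamma^-)\rangle,
\end{align*}
where the middle equality uses Theorem \ref{gm} applied to each of $\gamma^n\eta^n$, $\gamma^n$, $\eta^n$ (since the path $\{\varrho_t\}$ has linear part $\varrho_0$ and infinitesimal translation $\dot{\varrho}_0$, giving $\rho=(\varrho_0,\dot{\varrho}_0)$), and the last equality is exactly the first identity of Proposition \ref{main}.

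The only nontrivial point is thus justifying the interchange of limit and derivative at the first equality. My plan here is to mimic the complexification argument already used for Lemma \ref{x0}. The smooth path $\{\varrho_t\}$ embeds into a complex analytic family $\{\varrho_u\}_{u\in\mathcal{D}}$ in $\mathsf{Hom}(\Gamma,\mathsf{G}_{\mathbb{C}})$; by Theorem \ref{canal}, after shrinking $\mathcal{D}$, the limit maps $\xi^\pm_{\varrho_u}$ depend complex analytically on $u$, and their evaluation at $(\gamma^n\eta^n)^\pm$ converges to their evaluation at $\eta^-,\gamma^+$ exponentially fast in $n$, uniformly on compact subsets of $\mathcal{D}$. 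Since the length $\ell_{\varrho_u}(\delta)$ (the logarithm of the top eigenvalue of $\varrho_u(\delta)$) is a complex analytic function of $u$, the sequence $u\mapsto\ell_{\varrho_u}(\gamma^n\eta^n)-\ell_{\varrho_u}(\gamma^n)-\ell_{\varrho_u}(\eta^n)$ is a family of holomorphic functions converging uniformly on compact subsets to $\log\mathsf{b}_{\varrho_u}(\eta^-,\gamma^-,\gamma^+,\eta^+)$. Cauchy's integral formula then yields uniform convergence of all $u$--derivatives; restricting to the real slice gives the desired interchange at $t=0$.

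I expect the main obstacle to lie precisely in making the uniform exponential convergence in Proposition \ref{cratio} visible in the complexified setting. The proof of Proposition \ref{cratio} in \cite{pressure metric} is stated for real representations, so I would have to check that its argument (which uses only hyperbolicity of the elements and contraction in the limit cone) extends continuously to nearby complex perturbations, i.e.\ that $\varrho_u(\gamma^n\eta^n)$ remains loxodromic with eigenvalue data depending holomorphically on $u$ and converging exponentially at the asserted rate. Once this is in place, the rest of the argument is a clean algebraic combination of the three cited results, and no further computation is needed beyond the display above.
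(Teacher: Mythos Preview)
Your proposal is correct and follows essentially the same route as the paper: the paper's one-line proof cites exactly Theorem \ref{gm}, Proposition \ref{cratio}, Lemma \ref{x0}, and Proposition \ref{main}, and your chain of equalities is precisely the intended combination, with the complexification/Cauchy argument of Lemma \ref{x0} supplying the interchange of $\lim_{n\to\infty}$ and $\left.\frac{d}{dt}\right|_{t=0}$. Your identification of the one nontrivial step (uniform exponential convergence of the length defect after complexifying) and its resolution via Theorem \ref{canal} matches the paper's implicit reasoning.
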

\begin{proof}
The result follows from using theorem \ref{gm}, proposition \ref{cratio}, lemma \ref{x0} and proposition \ref{main}.
\end{proof}

\section{Properties of the pressure metric}
In this section we will introduce the pressure metric and prove its properties in the context of Margulis Space Times. We note that subsection 5.1 and 5.2 closely follows \cite{pressure metric} and have been included here to make the exposition complete. Moreover, we also mention that the study of pressure metric and thermodynamical formalism was originally developed by Bowen, Bowen--Ruelle, Parry--Pollicott, Pollicott and Ruelle in \cite{Bow1}, \cite{Bow}, \cite{BR}, \cite{PP}, \cite{P}, \cite{R}.

\subsection{The thermodynamic mapping}

Let $\rho\in\mathsf{Hom}_{\hbox{\tiny $\mathrm{M}$}}(\Gamma, \mathsf{G})$ and let $h_{\mathfrak{f}_{\rho}}$ be the topological entropy of the reparametrized flow on $\mathsf{U}_0\Gamma$ corresponding to the reparametrization $\mathfrak{f}_{\rho}$. By theorem 1.0.1 of \cite{me} we know that the geodesic flow on $\mathsf{U}_{\hbox{\tiny $\mathrm{rec}$}}\mathsf{M}_{\rho}$ is metric Anosov. Hence by using proposition 3.5 of \cite{pressure metric} (originally proved by Bowen\cite{Bow1} and Pollicott\cite{P}) we deduce that $h_{\mathfrak{f}_{\rho}}$ is finite and positive and moreover,
\begin{align*}
h_{\mathfrak{f}_{\rho}} = \lim_{T \to \infty}\frac{1}{T} \log \left(\# \left\{[\gamma] \in \mathsf{O}(\Gamma) \mid \int_{\gamma}\mathfrak{f}_{\rho} \leqslant T \right\}\right)
\end{align*}
where $\mathsf{O}(\Gamma)$ is the set of closed orbits of $\mathsf{U}_0\Gamma$.
We also recall that for all $\gamma\in\Gamma$
\[\int_{\gamma}\mathfrak{f}_{\rho} = \alpha_{\rho}(\gamma).\]
Therefore we see that $h_{\mathfrak{f}_{\rho}}$ only depends on the Li\u vsic cohomology class of $\mathfrak{f}_{\rho}$. Hence we denote $h_{\mathfrak{f}_{\rho}}$ by $h_{\rho}$ and we get that
\begin{align}\label{hh}
h_{\rho} = \lim_{T \to \infty}\frac{1}{T} \log \left(\# \left\{[\gamma] \in\mathsf{O}(\Gamma) \mid \alpha_{\rho}(\gamma) \leqslant T \right\}\right).
\end{align}
\begin{lemma}\label{ent}
The following map is analytic:
\begin{align*}
h : \mathsf{Hom}_{\hbox{\tiny $\mathrm{M}$}}(\Gamma, \mathsf{G}) &\longrightarrow \mathbb{R} \\
\notag \rho &\longmapsto h_{\rho}
\end{align*}
\end{lemma}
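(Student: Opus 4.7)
The plan is to reduce analyticity of $\rho\mapsto h_\rho$ to a local statement near each $\rho_0\in\mathsf{Hom}_{\hbox{\tiny $\mathrm{M}$}}(\Gamma,\mathsf{G})$, and then to combine Proposition \ref{ranal} with the standard thermodynamic characterization of entropy via the pressure functional. More precisely, pick a real analytic family $\{\rho_u\}_{u\in\mathcal{D}}$ with $\rho_0=\rho$ filling out a neighborhood of $\rho$ in the analytic manifold $\mathsf{Hom}_{\hbox{\tiny $\mathrm{M}$}}(\Gamma,\mathsf{G})$. Apply Proposition \ref{ranal} to replace the positive H\"older reparametrizations $\mathfrak{f}_{\rho_u}$, which only vary continuously in $u$, by a real analytic family $\{\mathsf{f}_u\}_{u\in\mathcal{D}_1}$ of positive H\"older functions, each Li\u vsic cohomologous to $\mathfrak{f}_{\rho_u}$. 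Since the topological entropy of a H\"older reparametrization depends only on the Li\u vsic cohomology class of the speed function, we have $h_{\rho_u}=h_{\mathsf{f}_u}$, and the analytic family takes values in a fixed H\"older space $\mathcal{C}^\mu(\mathsf{U}_0\Gamma,\mathbb{R})$.

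Next I invoke the variational principle in the form used in section 3 of \cite{pressure metric}: for a positive H\"older potential $f$ on $\mathsf{U}_0\Gamma$, the entropy $h_f$ of the reparametrized flow is the unique real number for which the topological pressure with respect to the Gromov flow satisfies
\begin{equation*}
\mathtt{Pr}(-h_f\cdot f)=0.
\end{equation*}
Here $\mathtt{Pr}:\mathcal{C}^\mu(\mathsf{U}_0\Gamma,\mathbb{R})\to\mathbb{R}$ is the pressure functional, which by the Bowen--Ruelle--Parry--Pollicott theory (see \cite{PP}, \cite{R} and the summary in section 3 of \cite{pressure metric}) is real analytic on the space of H\"older potentials since the Gromov flow is metric Anosov. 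Consequently the composition
\begin{equation*}
F:\mathcal{D}_1\times\mathbb{R}\longrightarrow\mathbb{R},\qquad F(u,h)\defeq\mathtt{Pr}(-h\cdot\mathsf{f}_u)
\end{equation*}
is real analytic, and $F(u,h_{\rho_u})=0$ for every $u\in\mathcal{D}_1$.

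The analyticity of $u\mapsto h_{\rho_u}$ is then obtained from the real analytic implicit function theorem, provided the partial derivative in $h$ is nonzero. By the standard formula for the derivative of pressure,
\begin{equation*}
\left.\frac{\partial F}{\partial h}\right|_{(u,h_{\rho_u})}=-\int_{\mathsf{U}_0\Gamma}\mathsf{f}_u\,\mathsf{d}\mathfrak{m}_u,
\end{equation*}
where $\mathfrak{m}_u$ is the equilibrium state of the potential $-h_{\rho_u}\mathsf{f}_u$; this integral is strictly negative because $\mathsf{f}_u>0$ (which already required a small shrinking of the parameter disk in Proposition \ref{ranal}). Hence the implicit function theorem yields an analytic function $u\mapsto h_{\rho_u}$ on a possibly smaller neighborhood of $0$, which coincides with the entropy by uniqueness. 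Since the starting point $\rho$ was arbitrary, the global map $h$ is real analytic on $\mathsf{Hom}_{\hbox{\tiny $\mathrm{M}$}}(\Gamma,\mathsf{G})$.

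The only point that is not purely formal is the passage from a merely continuous family of reparametrizations to an analytic one; that obstacle has already been removed in Proposition \ref{ranal}, which is the heart of the argument. Everything afterwards is a packaging of the pressure formalism exactly as in section 3 of \cite{pressure metric}, adapted to the Margulis setting where the reparametrization is given by the affine Margulis invariant instead of a length function.
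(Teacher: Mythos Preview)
Your proposal is correct and follows essentially the same route as the paper's proof, which simply cites Proposition~3.12 of \cite{pressure metric}, Proposition~\ref{ranal}, and Theorem~1.0.1 of \cite{me}. What you have written is exactly an unpacking of that citation: Proposition~3.12 of \cite{pressure metric} encodes the analyticity of the pressure functional together with the implicit-function-theorem argument you spell out, Proposition~\ref{ranal} supplies the analytic family of positive H\"older potentials, and the metric Anosov property (Theorem~1.0.1 of \cite{me}) is what makes the thermodynamic formalism applicable to the flow on $\mathsf{U}_0\Gamma$.
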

\begin{proof}
The result follows from proposition 3.12 of \cite{pressure metric}, proposition \ref{ranal} and theorem 1.0.1 of \cite{me}.
\end{proof}
We recall that the Gromov flow $\psi$ on the compact metric space $\mathsf{U}_0\Gamma$ is H\"older. Now using lemma 3.1 of \cite{pressure metric} and proposition \ref{ranal} we deduce that the pressure of the map $-h_\rho\mathsf{f}_\rho$ is zero with respect to the Gromov flow $\psi$. Let us denote the space of all Li\u vsic cohomology classes of pressure zero functions on $\mathsf{U}_0\Gamma$ by $\mathcal{H}(\mathsf{U}_0\Gamma)$.
\begin{definition}
We define the $\textit{Thermodynamic mapping}$ as follows,
\begin{align*}
\mathfrak{T} : \mathsf{Hom}(\Gamma, \mathsf{G}) &\longrightarrow \mathcal{H}(\mathsf{U}_0\Gamma)\\
\notag \rho &\longmapsto [-h_{\rho}\mathsf{f}_{\rho}].
\end{align*}
\end{definition}
\begin{lemma}
The map $\mathfrak{T}$ is analytic.
\end{lemma}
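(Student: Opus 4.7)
The plan is to show that the map $\mathfrak{T}$ factors, on any real analytic family in $\mathsf{Hom}_{\hbox{\tiny $\mathrm{M}$}}(\Gamma,\mathsf{G})$, as a composition of maps each of which is analytic with values in an appropriate Banach space. Concretely, let $\{\rho_u\}_{u\in\mathcal{D}}\subset\mathsf{Hom}_{\hbox{\tiny $\mathrm{M}$}}(\Gamma,\mathsf{G})$ be a real analytic family. By Proposition \ref{ranal}, after shrinking to a sub-disk $\mathcal{D}_1$ containing $0$, there exists a real analytic family $\{\mathsf{f}_u\}_{u\in\mathcal{D}_1}$ of positive $\mu$-H\"older continuous functions on $\mathsf{U}_0\Gamma$, that is, a real analytic map $u\mapsto\mathsf{f}_u$ into the Banach space $\mathcal{C}^\mu(\mathsf{U}_0\Gamma,\mathbb{R})$, with the property that $\mathsf{f}_u$ is Li\u{v}sic cohomologous to $\mathfrak{f}_{\rho_u}$.

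Next, by Lemma \ref{ent}, the map $u\mapsto h_{\rho_u}$ is a real analytic real-valued function on $\mathcal{D}_1$. Since scalar multiplication $\mathbb{R}\times\mathcal{C}^\mu(\mathsf{U}_0\Gamma,\mathbb{R})\to\mathcal{C}^\mu(\mathsf{U}_0\Gamma,\mathbb{R})$ is continuous bilinear, the composite
\begin{equation*}
u\longmapsto -h_{\rho_u}\,\mathsf{f}_u
\end{equation*}
is real analytic as a map from $\mathcal{D}_1$ into $\mathcal{C}^\mu(\mathsf{U}_0\Gamma,\mathbb{R})$. Because Li\u{v}sic cohomology is preserved under scaling by a constant, $-h_{\rho_u}\mathsf{f}_u$ is Li\u{v}sic cohomologous to $-h_{\rho_u}\mathfrak{f}_{\rho_u}$, and this latter function has pressure zero by lemma 3.1 of \cite{pressure metric} together with the definition of $h_{\rho_u}$. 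Therefore $-h_{\rho_u}\mathsf{f}_u$ represents $\mathfrak{T}(\rho_u)\in\mathcal{H}(\mathsf{U}_0\Gamma)$.

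It remains to observe that the canonical projection $\mathcal{C}^\mu(\mathsf{U}_0\Gamma,\mathbb{R})\to\mathcal{H}(\mathsf{U}_0\Gamma)$ onto pressure-zero cohomology classes, through which we factor $\mathfrak{T}$, is continuous linear (in fact this is precisely the topology that $\mathcal{H}(\mathsf{U}_0\Gamma)$ is given in the thermodynamical framework of section 3 of \cite{pressure metric}). Composition of a real analytic map with a continuous linear map is again real analytic, so $u\mapsto\mathfrak{T}(\rho_u)$ is real analytic, which yields the claim for arbitrary analytic families and hence analyticity of $\mathfrak{T}$ on the analytic manifold $\mathsf{Hom}_{\hbox{\tiny $\mathrm{M}$}}(\Gamma,\mathsf{G})$.

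The only delicate point, and the step I expect to be the main obstacle, is keeping track of the Banach space setup: one must ensure that the H\"older exponent $\mu$ supplied by Proposition \ref{ranal} is uniform along the chosen analytic family (Proposition \ref{ranal} does provide a single exponent on a fixed sub-disk), and that the scaling-then-projecting to cohomology is understood as a map into the fixed Banach space of pressure-zero H\"older classes used in \cite{pressure metric}. Once these identifications are in place, the argument reduces to the routine fact that analyticity is preserved under pointwise products with real analytic scalar-valued factors and under continuous linear quotient maps.
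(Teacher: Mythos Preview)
Your proof is correct and follows essentially the same approach as the paper: the paper's proof simply invokes Proposition \ref{ranal} (analyticity of $u\mapsto\mathsf{f}_u$) together with the analyticity of the entropy (Lemma \ref{ent}), which is exactly the factorization you spell out in detail. Your additional care about the uniform H\"older exponent and the Banach-space setup is welcome elaboration, but the underlying argument is the same.
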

\begin{proof}
The result follows from proposition \ref{ranal} and the fact that the entropy funtion is also analytic.
\end{proof}

\subsection{The pressure metric}

Let $I(f,g)$ be the $\textit{intersection number}$ of the two reparametrizations $f$ and $g$. As our flow is metric Anosov, using theorem 3.7 of \cite{pressure metric} (originally proved by Bowen\cite{Bow1} and Pollicott\cite{P}) and equation (7) of \cite{pressure metric} we get that
\begin{align*}
I(\mathsf{f}_{\rho_1},\mathsf{f}_{\rho_2})=\lim_{T\to\infty}\frac{1}{\#R_T(\rho_1)}\sum_{[\gamma]\in R_T(\rho_1)}\frac{\alpha_{\rho_2}(\gamma)}{\alpha_{\rho_1}(\gamma)}
\end{align*}
where $R_T(\rho_1)\defeq \left\{[\gamma] \in\mathsf{O}(\Gamma) \mid \alpha_{\rho_1}(\gamma) \leqslant T \right\}$. And using proposition 3.12 of \cite{pressure metric} and proposition \ref{ranal} we notice that the map $I$ is analytic. Let us define 
\[J_{\rho_1}(\rho_2)\defeq I(\rho_1,\rho_2)\frac{h_{\rho_2}}{h_{\rho_1}}.\]

\begin{proposition}\label{p}
The following statements are true:
\begin{enumerate}
\item for all $\rho_1,\rho_2\in\mathsf{Hom}_{\hbox{\tiny $\mathrm{M}$}}(\Gamma, \mathsf{G})$ we have $J_{\rho_1}(\rho_2)\geqslant1$,
\item if $\rho_1,\rho_2\in\mathsf{Hom}_{\hbox{\tiny $\mathrm{M}$}}(\Gamma, \mathsf{G})$ and $J_{\rho_1}(\rho_2)=1$ then there exists a positive real number $c$ such that for all $\gamma\in\Gamma$
\[c\alpha_{\rho_1}(\gamma)=\alpha_{\rho_2}(\gamma),\]
\item if $\{\rho_t\}_{t\in \mathtt{I}}\subset\mathsf{Hom}_{\hbox{\tiny $\mathrm{M}$}}(\Gamma, \mathsf{G})$ is a smooth path parametrized by an open interval $\mathtt{I}\subset\mathbb{R}$ containing 0 then 
\[\left.\frac{\partial^2}{\partial t^2}\right|_{t=0}J_{\rho_0}(\rho_t)=0\]
if and only if 
\[\left.\frac{d}{dt}\right|_{t=0} h_{\rho_t}\mathsf{f}_{\rho_t}\simeq0\] 
in Li\u vsic cohomology.
\end{enumerate}
\end{proposition}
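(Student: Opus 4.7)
The plan is to base the entire argument on the variational principle for topological pressure together with the standard uniqueness and rigidity results in thermodynamic formalism, all of which are available in our setting because theorem 1.0.1 of \cite{me} gives metric Anosov regularity and proposition \ref{ranal} realises the reparametrised flow as a H\"older reparametrisation of the Gromov flow $\psi$. Throughout I write $F_\rho \defeq -h_\rho \mathsf{f}_\rho$; by construction of $\mathfrak{T}$ the potential $F_\rho$ has zero pressure, and being H\"older it admits a unique equilibrium state $m_\rho$ on $\mathsf{U}_0\Gamma$ (proposition 3.8 of \cite{pressure metric}). Since $P(F_\rho)=0$, the measure-theoretic entropy is $h_\psi(m_\rho) = h_\rho \int \mathsf{f}_\rho\, dm_\rho$, and Birkhoff's theorem applied to the Bowen--Pollicott formula for the intersection gives $I(\mathsf{f}_{\rho_1},\mathsf{f}_{\rho_2}) = \int \mathsf{f}_{\rho_2}\, dm_{\rho_1} / \int \mathsf{f}_{\rho_1}\, dm_{\rho_1}$.

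For part (1), I would test the variational principle for $F_{\rho_2}$ against the measure $m_{\rho_1}$:
\begin{align*}
0 \;=\; P(F_{\rho_2}) \;\geqslant\; h_\psi(m_{\rho_1}) + \int F_{\rho_2}\, dm_{\rho_1} \;=\; h_{\rho_1}\!\int\! \mathsf{f}_{\rho_1}\, dm_{\rho_1} - h_{\rho_2}\!\int\! \mathsf{f}_{\rho_2}\, dm_{\rho_1}.
\end{align*}
Dividing by the positive quantity $h_{\rho_1} \int \mathsf{f}_{\rho_1}\, dm_{\rho_1}$ yields $J_{\rho_1}(\rho_2) \geqslant 1$. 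For part (2), equality in the variational principle forces $m_{\rho_1}$ to also be an equilibrium state for $F_{\rho_2}$; uniqueness of equilibrium states then makes $F_{\rho_1}$ and $F_{\rho_2}$ Li\u vsic cohomologous up to an additive constant. Integrating this cohomology along a closed orbit $[\gamma]$ annihilates the coboundary and yields $h_{\rho_2}\alpha_{\rho_2}(\gamma) = h_{\rho_1}\alpha_{\rho_1}(\gamma)$ for every $\gamma \in \Gamma$, so $c = h_{\rho_1}/h_{\rho_2}$ works.

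For part (3), I would first rewrite $J_{\rho_0}(\rho_t) = \int F_{\rho_t}\, dm_{\rho_0} / \int F_{\rho_0}\, dm_{\rho_0}$ and differentiate the identity $P(F_{\rho_t}) \equiv 0$. The first derivative gives $DP_{F_{\rho_0}}(\dot F_{\rho_0}) = \int \dot F_{\rho_0}\, dm_{\rho_0} = 0$, so the first variation of $J$ vanishes automatically. The second derivative, together with the standard asymptotic-variance formula $D^2 P_{F_{\rho_0}}(G,G) = \mathrm{Var}(G,\, m_{\rho_0})$ (proposition 3.11 of \cite{pressure metric}), rearranges into
\begin{align*}
\left.\frac{\partial^2}{\partial t^2}\right|_{t=0} J_{\rho_0}(\rho_t) \;=\; \frac{\mathrm{Var}\bigl(\dot F_{\rho_0},\, m_{\rho_0}\bigr)}{h_{\rho_0}\!\int\! \mathsf{f}_{\rho_0}\, dm_{\rho_0}},
\end{align*}
where $\dot F_{\rho_0} = -\left.\tfrac{d}{dt}\right|_{t=0}(h_{\rho_t}\mathsf{f}_{\rho_t})$. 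Since the asymptotic variance of a H\"older function vanishes if and only if that function is Li\u vsic cohomologous to zero (again proposition 3.11 of \cite{pressure metric}, originally Parry--Pollicott), this is exactly the claimed equivalence.

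The main potential obstacle is that the full thermodynamic package I am invoking---variational principle, uniqueness of equilibrium states, analyticity of pressure, the variance/Hessian formula, and the Li\u vsic rigidity---is classical for smooth Anosov flows but has to be used here for a \emph{metric} Anosov flow on the compact space $\mathsf{U}_0\Gamma$. Rather than reprove these, I would cite section 3 of \cite{pressure metric}, where exactly this generalisation is carried out; the only non-trivial input required from our side, namely that the reparametrised flow associated to $\rho \in \mathsf{Hom}_{\hbox{\tiny $\mathrm{M}$}}(\Gamma,\mathsf{G})$ sits inside that framework, is provided by theorem 1.0.1 of \cite{me} together with proposition \ref{ranal}.
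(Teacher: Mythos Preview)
Your proposal is correct and follows the same route as the paper, which simply cites propositions 3.8, 3.9 and 3.11 of \cite{pressure metric}; you have essentially unpacked the content of those citations in the metric Anosov setting. One small point in part (2): integrating the cohomology relation over a closed orbit annihilates the coboundary but \emph{not} the additive constant---you should first observe that this constant is zero because $P(F_{\rho_1})=P(F_{\rho_2})=0$, and only then conclude $h_{\rho_2}\alpha_{\rho_2}(\gamma)=h_{\rho_1}\alpha_{\rho_1}(\gamma)$.
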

\begin{proof}
The result follows from propositions 3.8, 3.9 and 3.11 of \cite{pressure metric}. We note that proposition 3.9 of \cite{pressure metric} was originally proved by Parry--Pollicott\cite{PP} and Ruelle\cite{R} and proposition 3.11 of \cite{pressure metric} is a generalization of an earlier result due to Bonahon\cite{bona}.
\end{proof}
\begin{definition}
Let $\rho\in\mathsf{Hom}_{\hbox{\tiny $\mathrm{M}$}}(\Gamma, \mathsf{G})$ and let $v,w\in\mathsf{T}_\rho\mathsf{Hom}_{\hbox{\tiny $\mathrm{M}$}}(\Gamma, \mathsf{G})$. We define
\[\mathtt{P}_\rho(v,w)\defeq\mathsf{D}^2_\rho J_\rho(v,w).\]
The map $\mathtt{P}$ is called the \textit{pressure form} on $\mathsf{Hom}_{\hbox{\tiny $\mathrm{M}$}}(\Gamma, \mathsf{G})$.
\end{definition}

\begin{remark}\label{nonneg}
We notice that by proposition \ref{p} the pressure form $\mathtt{P}$ on $\mathsf{Hom}_{\hbox{\tiny $\mathrm{M}$}}(\Gamma, \mathsf{G})$ is non-negative definite.
\end{remark}

\subsection{Vectors with pressure norm zero}

In this subsection we will describe the zero vectors of the pressure norm.
\begin{proposition}\label{z1}
Let $\{\rho_t\}$ be a smooth path in $\mathsf{Hom}_{\hbox{\tiny $\mathrm{M}$}}(\Gamma, \mathsf{G})$ with $\left.\frac{d}{dt}\right|_{t=0} \rho_t = v$. If $\mathtt{P}_\rho(v,v) = 0$ and $\left.\frac{d}{dt}\right|_{t=0} h_{\rho_t} = 0$ then for all $\gamma$ in $\Gamma$
\begin{align*}
\left.\frac{d}{dt}\right|_{t=0} \alpha_{\rho_t}(\gamma) = 0.
\end{align*}
\end{proposition}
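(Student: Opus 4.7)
The plan is to combine Proposition \ref{p}(3) with the assumption $\left.\frac{d}{dt}\right|_{t=0} h_{\rho_t} = 0$ to produce a Li\u vsic coboundary, and then read off the vanishing of the Margulis invariant derivatives by integrating over closed orbits.

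First, since $\mathtt{P}_\rho(v,v)=\left.\frac{\partial^2}{\partial t^2}\right|_{t=0} J_{\rho_0}(\rho_t)=0$, Proposition \ref{p}(3) gives
\[
\left.\frac{d}{dt}\right|_{t=0}\bigl(h_{\rho_t}\mathsf{f}_{\rho_t}\bigr)\simeq 0
\]
in Li\u vsic cohomology. Expanding the product rule and using the hypothesis $\left.\frac{d}{dt}\right|_{t=0} h_{\rho_t}=0$, the first term drops out, leaving
\[
h_{\rho_0}\left.\frac{d}{dt}\right|_{t=0}\mathsf{f}_{\rho_t}\simeq 0.
\]
Since $h_{\rho_0}>0$ by Lemma \ref{ent} and the positivity statement recalled just above it, dividing by this scalar gives $\left.\frac{d}{dt}\right|_{t=0}\mathsf{f}_{\rho_t}\simeq 0$ as an element of $\mathcal{H}(\mathsf{U}_0\Gamma)$.

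Next I would translate this Li\u vsic-trivial statement into information about periods. A function that is a Li\u vsic coboundary integrates to zero on every closed orbit of the Gromov flow, so for each $\gamma\in\Gamma$
\[
\int_\gamma \left.\frac{d}{dt}\right|_{t=0}\mathsf{f}_{\rho_t}=0.
\]
Now I would interchange the derivative and the integral. This is the only delicate point, and it is justified by Proposition \ref{ranal}: the family $\{\mathsf{f}_{\rho_t}\}$ is constructed to be real analytic in $t$ (with values in H\"older functions on $\mathsf{U}_0\Gamma$), so on the compact closed orbit $\gamma$ the differentiation under the integral sign is legitimate, giving
\[
\left.\frac{d}{dt}\right|_{t=0}\int_\gamma \mathsf{f}_{\rho_t}=\int_\gamma\left.\frac{d}{dt}\right|_{t=0}\mathsf{f}_{\rho_t}=0.
\]

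Finally, recall that by construction $\mathsf{f}_{\rho_t}$ is Li\u vsic cohomologous to $\mathfrak{f}_{\rho_t}$, whence $\int_\gamma \mathsf{f}_{\rho_t}=\int_\gamma\mathfrak{f}_{\rho_t}=\alpha_{\rho_t}(\gamma)$ for every $\gamma$ and every parameter $t$ in the open disk of Proposition \ref{ranal}. Combining this with the previous display yields
\[
\left.\frac{d}{dt}\right|_{t=0}\alpha_{\rho_t}(\gamma)=0
\]
for all $\gamma\in\Gamma$, which is the desired conclusion. The substantive content is concentrated in Proposition \ref{p}(3) and in the analyticity of the reparametrizing family from Proposition \ref{ranal}; the rest of the argument is a short bookkeeping computation.
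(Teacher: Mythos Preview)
Your proof is correct and follows essentially the same route as the paper: invoke Proposition \ref{p}(3) to get that $\left.\frac{d}{dt}\right|_{t=0}(h_{\rho_t}\mathsf{f}_{\rho_t})$ is a Li\u vsic coboundary, use the entropy-derivative hypothesis and positivity of $h_{\rho_0}$ to reduce to $\left.\frac{d}{dt}\right|_{t=0}\mathsf{f}_{\rho_t}$, then integrate over closed orbits and swap derivative with integral to recover $\left.\frac{d}{dt}\right|_{t=0}\alpha_{\rho_t}(\gamma)=0$. The only cosmetic difference is that the paper integrates first and applies the product rule inside the integral, whereas you expand the product rule before integrating; your explicit justification of the derivative-integral interchange via Proposition \ref{ranal} is a nice addition.
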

\begin{proof}
We start by using proposition \ref{p} and notice that $\left.\frac{d}{dt}\right|_{t=0} h_{\rho_t}\mathsf{f}_{\rho_t}$ is Li\u vsic cohomologous to zero. Hence for all closed orbits $[\gamma]\in\mathsf{O}(\Gamma)$ we have that
\[\int_\gamma\left.\frac{d}{dt}\right|_{t=0} h_{\rho_t}\mathsf{f}_{\rho_t} =0.\]
Now we observe that
\begin{align*}
0&=\int_\gamma\left.\frac{d}{dt}\right|_{t=0} h_{\rho_t}\mathsf{f}_{\rho_t}= \int_\gamma\left(\left.\frac{d}{dt}\right|_{t=0} h_{\rho_t}\right)\mathsf{f}_{\rho_0} + \int_\gamma h_{\rho_0}\left(\left.\frac{d}{dt}\right|_{t=0} \mathsf{f}_{\rho_t}\right)\\
&= h_{\rho_0}\int_\gamma \left.\frac{d}{dt}\right|_{t=0} \mathsf{f}_{\rho_t} = h_{\rho_0}\left.\frac{d}{dt}\right|_{t=0}\int_\gamma\mathsf{f}_{\rho_t}=h_{\rho_0}\left.\frac{d}{dt}\right|_{t=0} \alpha_{\rho_t}(\gamma).
\end{align*}
We conclude by recalling that the entropy $h_{\rho_0}$ is positive and hence our result follows.
\end{proof}

\begin{lemma}\label{z2}
If for all $\gamma\in\Gamma$ we have $\left.\frac{d}{dt}\right|_{t=0} \alpha_{\rho_t}(\gamma) = 0$ then for all $\gamma, \eta\in\Gamma$ we have
\[\left.\frac{d}{dt}\right|_{t = 0}\mathsf{b}_{\rho_t}\left(\eta^+,\gamma^-, \gamma^+, \eta^-\right)=0.\]
\end{lemma}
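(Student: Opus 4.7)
The plan is to reduce the statement to a limit identity via Proposition \ref{cratio} and Theorem \ref{gm}, and then to extract the vanishing from the hypothesis using the interchange identity in Proposition \ref{main}.

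By property (\ref{b3}), $\mathsf{b}(\eta^+, \gamma^-, \gamma^+, \eta^-) + \mathsf{b}(\eta^-, \gamma^-, \gamma^+, \eta^+) = 1$, so it is enough to show
\[\frac{d}{dt}\bigg|_{t=0}\log\mathsf{b}_{\mathtt{L}_{\rho_t}}(\eta^-, \gamma^-, \gamma^+, \eta^+) = 0.\]
Applied pointwise in $t$ to $\varrho_t := \mathtt{L}_{\rho_t}$, Proposition \ref{cratio} gives
\[\log\mathsf{b}_{\mathtt{L}_{\rho_t}}(\eta^-, \gamma^-, \gamma^+, \eta^+) = \lim_{n\to\infty}\bigl(\ell_{\mathtt{L}_{\rho_t}}(\gamma^n\eta^n) - \ell_{\mathtt{L}_{\rho_t}}(\gamma^n) - \ell_{\mathtt{L}_{\rho_t}}(\eta^n)\bigr).\]
I would differentiate both sides at $t=0$ and exchange the limit with the derivative, repeating the exponential-convergence argument used in Lemma \ref{x0} and Proposition \ref{main}. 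Theorem \ref{gm} (Goldman--Margulis) applied to the path $\{\mathtt{L}_{\rho_t}\}$ then converts each length-derivative into a Margulis invariant, so
\[\frac{d}{dt}\bigg|_{t=0}\log\mathsf{b}_{\mathtt{L}_{\rho_t}}(\eta^-, \gamma^-, \gamma^+, \eta^+) = \lim_{n\to\infty}\bigl(\alpha_{\sigma}(\gamma^n\eta^n) - \alpha_{\sigma}(\gamma^n) - \alpha_{\sigma}(\eta^n)\bigr),\]
where $\sigma = (\mathtt{L}_{\rho_0}, \dot{\mathtt{L}}_{\rho_0})$ and $\alpha_{\sigma}(\mu) = \langle \dot{\mathtt{L}}_{\rho_0}(\mu) \mid \nu_{\mathtt{L}_{\rho_0}}(\mu^-, \mu^+)\rangle$ is interpreted as an algebraic quantity (which is legitimate even if $\sigma$ need not lie in $\mathsf{Hom}_{\hbox{\tiny $\mathrm{M}$}}(\Gamma,\mathsf{G})$, since Proposition \ref{main}'s computation depends only on the hyperbolicity of $\mathtt{L}_{\rho_0}$).

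From the hypothesis $\left.\tfrac{d}{dt}\right|_{t=0}\alpha_{\rho_t}(\mu)=0$ for every $\mu\in\Gamma$ and Proposition \ref{main} (the second, interchange identity), the derivative of $F_t := \langle X_{\rho_t(\gamma)} - X_{\rho_t(\eta)} \mid \nu_{\rho_t}(\eta^-, \gamma^+) + \nu_{\rho_t}(\eta^+, \gamma^-)\rangle$ vanishes at $t=0$. Expanding $\left.\tfrac{d}{dt}\right|_{t=0} F_t$ by the product rule and substituting the fixed-point equation $(\mathtt{L}_{\rho_t}(\mu)-I)X_{\rho_t(\mu)} = \alpha_{\rho_t}(\mu)\nu_{\rho_t}(\mu) - \mathtt{u}_{\rho_t}(\mu)$, the vanishing of $\tfrac{d}{dt}\alpha_{\rho_t}$ cancels precisely the terms involving $\dot{\mathtt{u}}_{\rho_0}$; what remains assembles into the affine-geometric quantity $F_{\sigma}$ for the formal cocycle representation $\sigma$. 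Proposition \ref{main} (in its algebraic form) identifies $F_{\sigma}$ with the limit in the displayed equation above, and hence both quantities vanish simultaneously.

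The principal obstacle is the algebraic identification in the last paragraph: carefully unfolding $\left.\tfrac{d}{dt}\right|_{t=0} F_t$ so as to separate the contributions of $\dot{\mathtt{u}}_{\rho_0}$ (killed by the hypothesis) from those of $\dot{\mathtt{L}}_{\rho_0}$ (which encode $F_{\sigma}$), and verifying that the exchange of limit and derivative in Proposition \ref{cratio} survives the passage from the affine to the linear setting. Both verifications follow the template laid down in the proofs of Lemma \ref{x0} and Proposition \ref{main}, using exponential contraction of $\gamma^n\eta^n$ toward $(\eta^-,\gamma^+)$ in the limit set of $\mathtt{L}_{\rho_0}$.
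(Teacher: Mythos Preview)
Your approach diverges substantially from the paper's, and the detour through the auxiliary representation $\sigma=(\mathtt{L}_{\rho_0},\dot{\mathtt{L}}_{\rho_0})$ introduces a genuine gap. The crux of your argument is the claim that, once the hypothesis $\left.\tfrac{d}{dt}\right|_{t=0}\alpha_{\rho_t}(\mu)=0$ ``cancels the $\dot{\mathtt{u}}_{\rho_0}$ terms'' in $\dot F_0$, what remains is exactly $F_\sigma=\langle X_{\sigma(\gamma)}-X_{\sigma(\eta)}\mid \nu_{\rho_0}(\eta^-,\gamma^+)+\nu_{\rho_0}(\eta^+,\gamma^-)\rangle$. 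But $\dot F_0$ expands as
\[
\langle \dot X_{\rho_0(\gamma)}-\dot X_{\rho_0(\eta)}\mid \nu_{\rho_0}+\nu_{\rho_0}'\rangle
+\langle X_{\rho_0(\gamma)}-X_{\rho_0(\eta)}\mid \dot\nu_{\rho_0}+\dot\nu_{\rho_0}'\rangle,
\]
and neither summand has any evident relation to $X_{\sigma(\mu)}$, which is a fixed point of the \emph{different} affine transformation $(\mathtt{L}_{\rho_0}(\mu),\dot{\mathtt{L}}_{\rho_0}(\mu))$. The fixed-point equation you invoke relates $X_{\rho_t(\mu)}$ to $\mathtt{u}_{\rho_t}(\mu)$, not to $\dot{\mathtt{L}}_{\rho_0}(\mu)$; differentiating it produces $\dot X_{\rho_0(\mu)}$ in terms of $\dot{\mathtt{u}}_{\rho_0}$, $\dot{\mathtt{L}}_{\rho_0}$ and $X_{\rho_0(\mu)}$, but there is no mechanism that singles out $X_{\sigma(\mu)}$. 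So the identification $\dot F_0\leadsto F_\sigma$ is unsubstantiated, and with it the vanishing of your displayed limit.

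The paper's argument avoids $\sigma$, Theorem~\ref{gm}, and Proposition~\ref{cratio} entirely. It applies Proposition~\ref{main} \emph{twice} to the given path $\{\rho_t\}\subset\mathsf{Hom}_{\hbox{\tiny $\mathrm{M}$}}(\Gamma,\mathsf{G})$: once to the pair $(\gamma,\eta)$ and once to $(\gamma,\eta^{-1})$, yielding the vanishing at $t=0$ of the derivatives of both
\[
A_t=\langle X_{\rho_t(\gamma)}-X_{\rho_t(\eta)}\mid \nu_{\rho_t}(\eta^-,\gamma^+)+\nu_{\rho_t}(\eta^+,\gamma^-)\rangle
\]
and the analogous quantity $B_t$ with $\eta^\pm$ swapped. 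Then the identities (\ref{b1})--(\ref{b3}) among the neutral sections give a linear relation, for every $t$, between the vectors $\nu_{\rho_t}(\eta^+,\gamma^+)+\nu_{\rho_t}(\eta^-,\gamma^-)$ and $\nu_{\rho_t}(\eta^-,\gamma^+)+\nu_{\rho_t}(\eta^+,\gamma^-)$ whose coefficients are $\mathsf{b}_{\rho_t}(\eta^+,\gamma^-,\gamma^+,\eta^-)$ and $1-\mathsf{b}_{\rho_t}(\eta^+,\gamma^-,\gamma^+,\eta^-)$. Pairing with $X_{\rho_t(\gamma)}-X_{\rho_t(\eta)}$ turns this into a scalar relation $\mathsf{b}_t B_t=(1-\mathsf{b}_t)A_t$; differentiating and using $\dot A_0=\dot B_0=0$ then forces $\dot{\mathsf{b}}_0=0$. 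This is short, stays within the affine picture, and never requires interpreting quantities for a possibly non-proper representation.
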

\begin{proof}
Using proposition \ref{main} we get that 
\[\left.\frac{d}{dt}\right|_{t = 0} \left\langle X_{\rho_t(\gamma)} - X_{\rho_t(\eta)}\mid\nu_{\rho_t}\left(\eta^-, \gamma^+\right)+\nu_{\rho_t}\left(\eta^+, \gamma^-\right)\right\rangle=0\]
and also
\[\left.\frac{d}{dt}\right|_{t = 0} \left\langle X_{\rho_t(\gamma)} - X_{\rho_t(\eta)}\mid\nu_{\rho_t}\left(\eta^+, \gamma^+\right)+\nu_{\rho_t}\left(\eta^-, \gamma^-\right)\right\rangle=0.\]
Now using identities \ref{b1}, \ref{b2} and \ref{b3} we get that
\begin{align*}
&\mathsf{b}_{\rho_t}\left(\eta^+,\gamma^-,\gamma^+,\eta^-\right)\left(\nu_{\rho_t}\left(\eta^+, \gamma^+\right)+\nu_{\rho_t}\left(\eta^-, \gamma^-\right)\right)\\
&= \mathsf{b}_{\rho_t}\left(\eta^-,\gamma^-,\gamma^+,\eta^+\right)\left(\nu_{\rho_t}\left(\eta^-, \gamma^+\right)+\nu_{\rho_t}\left(\eta^+, \gamma^-\right)\right)\\
&= \left(1-\mathsf{b}_{\rho_t}\left(\eta^+,\gamma^-,\gamma^+,\eta^-\right)\right)\left(\nu_{\rho_t}\left(\eta^-, \gamma^+\right)+\nu_{\rho_t}\left(\eta^+, \gamma^-\right)\right).
\end{align*}
Therefore we deduce that 
\[\left.\frac{d}{dt}\right|_{t = 0}\mathsf{b}_{\rho_t}\left(\eta^+,\gamma^-,\gamma^+,\eta^-\right)=0\]
for all $\gamma,\eta\in\Gamma$.
\end{proof}

\begin{proposition}\label{final}
Let $\{\rho_t\}$ be a smooth path in $\mathsf{Hom}_{\hbox{\tiny $\mathrm{M}$}}(\Gamma, \mathsf{G})$ with $\left.\frac{d}{dt}\right|_{t=0} \rho_t = \dot{\rho}_0.$ If $\mathtt{P}_{\rho_0}(\dot{\rho}_0,\dot{\rho}_0) = 0$ and $\left.\frac{d}{dt}\right|_{t=0} h_{\rho_t} = 0$ then 
\[[\dot{\rho}_0] = 0\] 
in $\mathsf{H}^1_{\rho_0}\left(\Gamma, \mathfrak{g}\right)$ where $\mathfrak{g}$ is the Lie algebra of the Lie group $\mathsf{G}$ and $\mathsf{H}^1_{\rho_0}\left(\Gamma, \mathfrak{g}\right)$ is the group cohomology.
\end{proposition}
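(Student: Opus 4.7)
The plan is to extract from the pressure-norm hypothesis the vanishing of the first variation of every Margulis invariant and then, using cross-ratio rigidity of the linear part together with the fixed-point formulas of Proposition \ref{main}, to produce a coboundary trivializing $\dot{\rho}_0$. Proposition \ref{z1} applied to the hypotheses gives $\left.\frac{d}{dt}\right|_{t=0}\alpha_{\rho_t}(\gamma)=0$ for every $\gamma\in\Gamma$, and Lemma \ref{z2} then yields $\left.\frac{d}{dt}\right|_{t=0}\mathsf{b}_{\rho_t}(\eta^+,\gamma^-,\gamma^+,\eta^-)=0$ for every coprime pair $\gamma,\eta\in\Gamma$. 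Differentiating the second identity of Proposition \ref{main} at $t=0$ and plugging in these vanishings produces, for every coprime pair,
\[
\left.\frac{d}{dt}\right|_{t=0}\langle X_{\rho_t(\gamma)} - X_{\rho_t(\eta)}\mid\nu_{\rho_t}(\eta^-,\gamma^+)+\nu_{\rho_t}(\eta^+,\gamma^-)\rangle = 0.
\]

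Next, I would decompose $\dot{\rho}_0 = (\dot{\mathtt{L}}_0,\dot{\mathtt{u}}_0)$ along $\mathfrak{g}=\mathfrak{so}(2,1)\oplus\mathbb{R}^3$ and first show $[\dot{\mathtt{L}}_0]=0$ in $H^1(\Gamma,\mathfrak{so}(2,1))$: the marked cross-ratio being a complete conjugation invariant of the Schottky limit-set action of $\mathtt{L}_{\rho_0}(\Gamma)$, the vanishing of its first variation on every coprime pair forces the infinitesimal deformation of the fixed-point configuration $\{(\gamma^-,\gamma^+)\}_{\gamma\in\Gamma}$ in $\partial_\infty\mathbb{H}$ to be tangent to a global $\mathsf{SO}^0(2,1)$-action, and by $\Gamma$-equivariance this means $[\dot{\mathtt{L}}_0]=0$. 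Writing $\dot{\mathtt{L}}_0=\delta A$ with $A\in\mathfrak{so}(2,1)$ and replacing $\dot{\rho}_0$ by the cohomologous cocycle $\dot{\rho}_0-\delta(A,0)\in Z^1(\Gamma,\mathfrak{g})$, I may assume $\dot{\mathtt{L}}_0=0$, so that both $\mathtt{L}_{\rho_t}$ and $\nu_{\rho_t}$ are stationary to first order.

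With $\dot{\mathtt{L}}_0=0$ the condition $\left.\frac{d}{dt}\right|_{t=0}\alpha_{\rho_t}(\gamma)=0$ reduces to the linear relation $\langle\dot{\mathtt{u}}_0(\gamma)\mid\nu_{\rho_0}(\gamma^-,\gamma^+)\rangle=0$ for every $\gamma$, and differentiation of the fixed-point equation $\rho_t(\gamma)X_{\rho_t(\gamma)}=X_{\rho_t(\gamma)}$ at $t=0$ yields $(\mathbb{I}-\mathtt{L}_{\rho_0}(\gamma))\dot{X}_\gamma=\dot{\mathtt{u}}_0(\gamma)$, where $\dot{X}_\gamma$ is well defined modulo $\mathbb{R}\nu_{\rho_0}(\gamma^-,\gamma^+)$. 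The compatibility relation derived in the first paragraph becomes $\langle\dot{X}_\gamma-\dot{X}_\eta\mid\nu_{\rho_0}(\eta^-,\gamma^+)+\nu_{\rho_0}(\eta^+,\gamma^-)\rangle=0$ for every coprime pair. Using the identities satisfied by $\nu$ and $\mathsf{b}$ together with the density of attracting/repelling fixed-point pairs in the Schottky limit set, I would propagate this relation from a set of generators of $\Gamma$ to the whole group and extract a single $v\in\mathbb{R}^3$ such that $\dot{X}_\gamma\equiv v\pmod{\mathbb{R}\nu_{\rho_0}(\gamma^-,\gamma^+)}$ for every $\gamma$; such a $v$ satisfies $\dot{\mathtt{u}}_0(\gamma)=(\mathbb{I}-\mathtt{L}_{\rho_0}(\gamma))v$, so $\dot{\mathtt{u}}_0$ is a coboundary in $Z^1(\mathtt{L}_{\rho_0}(\Gamma),\mathbb{R}^3)$ and $[\dot{\rho}_0]=0$ in $H^1_{\rho_0}(\Gamma,\mathfrak{g})$. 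The hard part is this last step: the pointwise data give only the family of affine lines $\dot{X}_\gamma+\mathbb{R}\nu_{\rho_0}(\gamma^-,\gamma^+)$ in $\mathbb{R}^3$, and showing that they meet in a common point requires delicate use of the free Schottky dynamics and of the way $\nu$ twists the compatibility relations between distinct conjugacy classes.
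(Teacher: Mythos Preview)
Your strategy through the linear part matches the paper's: Proposition \ref{z1} gives $\left.\frac{d}{dt}\right|_{t=0}\alpha_{\rho_t}(\gamma)=0$, Lemma \ref{z2} gives the vanishing of the cross-ratio variation, and cross-ratio rigidity (the paper invokes the proof of lemma 10.8 of \cite{pressure metric}) forces $[\dot{\mathtt{L}}_0]=0$ in $\mathsf{H}^1_{\mathtt{L}_{\rho_0}}(\Gamma,\mathfrak{so}(2,1))$, so one may reduce to the case where the linear part is stationary to first order.

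The divergence is in the translation step, and there your proposal has a genuine gap. After the reduction you correctly arrive at
\[
\langle\dot{\mathtt{u}}_0(\gamma)\mid\nu_{\rho_0}(\gamma^-,\gamma^+)\rangle=0\quad\text{for every }\gamma\in\Gamma,
\]
and then attempt to build the coboundary vector $v$ by showing that the affine lines $\dot{X}_\gamma+\mathbb{R}\,\nu_{\rho_0}(\gamma)$ are concurrent, using the compatibility relations extracted from Proposition \ref{main}. You acknowledge this is ``the hard part'' and do not complete it. The paper bypasses this entirely: the displayed condition says precisely that the cocycle $\dot{\mathtt{u}}_0\in \mathsf{Z}^1(\mathtt{L}_{\rho_0}(\Gamma),\mathbb{R}^3)$ has identically vanishing Margulis-invariant spectrum, and Theorem 1.2 of Charette--Drumm \cite{cd} (strong marked isospectrality of affine Lorentzian groups) asserts exactly that such a cocycle is a coboundary. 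In other words, your ``hard part'' is already a theorem in the literature, and once you have $\langle\dot{\mathtt{u}}_0(\gamma)\mid\nu_{\rho_0}(\gamma)\rangle=0$ for all $\gamma$ the proof is finished; the additional compatibility relations coming from Proposition \ref{main} are not needed for this step.
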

\begin{proof}
Using proposition \ref{z1} and lemma \ref{z2} we get that
\[\left.\frac{d}{dt}\right|_{t = 0}\mathsf{b}_{\rho_t}\left(\eta^+,\gamma^-,\gamma^+,\eta^-\right)=0\]
for all $\gamma,\eta\in\Gamma$. Now using the proof of lemma 10.8 of \cite{pressure metric} we deduce that
\[\left[\left.\frac{d}{dt}\right|_{t=0} \mathtt{L}_{\rho_t}\right] = 0\]
in $\mathsf{H}^1_{\mathtt{L}_{\rho_0}}(\Gamma, \mathfrak{so}(2,1))$.
Therefore without loss of generality we can take 
\[\mathtt{L}_{\rho_t} = g_t\mathtt{L}_{\rho_0}g_t^{-1}\] 
for some smooth path $\{g_t\}\subset\mathsf{SO}^0(2,1)$. Now again using proposition \ref{z1} we get that for all $\gamma\in\Gamma$
\[\left.\frac{d}{dt}\right|_{t=0}\left\langle \mathtt{u}_{\rho_t}(\gamma)\mid\nu_{\rho_t}\left(\gamma^-,\gamma^+\right)\right\rangle=0.\]
We notice that $\nu_\rho$ only depends on $\mathtt{L}_\rho$. Hence 
\[\nu_{\rho_t}=g_t\nu_{\rho_0}\] 
for all $t$ and we obtain
\begin{align*}
\left\langle \left.\frac{d}{dt}\right|_{t=0} g_t^{-1}\mathtt{u}_{\rho_t}(\gamma)\mid\nu_{\rho_0}\left(\gamma^-,\gamma^+\right)\right\rangle = 0
\end{align*}
for all $\gamma\in\Gamma$. Now using theorem 1.2 of \cite{cd} we deduce that
\[\left[\left.\frac{d}{dt}\right|_{t=0} g_t^{-1}\mathtt{u}_{\rho_t}\right]=0\]
in $\mathsf{H}^1_{\mathtt{L}_{\rho_0}}(\Gamma, \mathfrak{so}(2,1))$. Hence it follows that
\[\left[\left.\frac{d}{dt}\right|_{t=0} \left(\mathtt{L}_{\rho_t}, \mathtt{u}_{\rho_t}\right)\right]=[\dot{\rho}_0] = 0\] 
in $\mathsf{H}^1_{\rho_0}\left(\Gamma, \mathfrak{g}\right)$.
\end{proof}

\subsection{Margulis multiverse}

Let $h_{\rho}$ be the topological entropy related to a representation $\rho\in\mathsf{Hom}_{\hbox{\tiny $\mathrm{M}$}}(\Gamma, \mathsf{G})$. We recall from equation \ref{hh} that
\begin{align}\label{h}
h_{\rho} = \lim_{T \to \infty}\frac{1}{T} \log \left(\# \left\{[\gamma] \in\mathsf{O}(\Gamma) \mid \alpha_\rho(\gamma) \leqslant T \right\}\right).
\end{align}
Moreover, we also recall that the map
\begin{align}
h : \mathsf{Hom}_{\hbox{\tiny $\mathrm{M}$}}(\Gamma, \mathsf{G}) &\longrightarrow \mathbb{R} \\
\notag \rho &\longmapsto h_{\rho}
\end{align}
is analytic. Now we define the $\textit{constant entropy sections}$ of $\mathsf{Hom}_{\hbox{\tiny $\mathrm{M}$}}(\Gamma, \mathsf{G})$ for any positive real number $k$ as follows:
\begin{align}
\mathsf{Hom}_{\hbox{\tiny $\mathrm{M}$}}(\Gamma, \mathsf{G})_k \defeq \left\{ \rho \in \mathsf{Hom}_{\hbox{\tiny $\mathrm{M}$}}(\Gamma, \mathsf{G}) \mid h_{\rho} = k \right\}.
\end{align} 
We note that if $(\varrho, \mathtt{u})$ is in $\mathsf{Hom}_{\hbox{\tiny $\mathrm{M}$}}(\Gamma, \mathsf{SO}^0(2,1)\ltimes\mathbb{R}^3)=\mathsf{Hom}_{\hbox{\tiny $\mathrm{M}$}}(\Gamma, \mathsf{G})$ then so is $(\varrho, c\mathtt{u})$ where $c$ is some positive real number.
\begin{lemma}\label{h1}
Let $(\varrho, \mathtt{u})$ be in $\mathsf{Hom}_{\hbox{\tiny $\mathrm{M}$}}(\Gamma, \mathsf{SO}^0(2,1)\ltimes\mathbb{R}^3)$ then for any positive real number $c$ we have
\begin{align*}
h_{(\varrho, c\mathtt{u})} = \frac{1}{c}\left. h_{(\varrho, \mathtt{u})}\right. .
\end{align*}
\end{lemma}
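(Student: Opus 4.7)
The plan is to compute directly from the counting formula \eqref{h} for $h_\rho$. The key observation is that when the translation part is scaled by a positive real number $c$, the Margulis invariant of every element scales by the same factor $c$, and then the limit in \eqref{h} rescales accordingly.

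First I would observe that the neutral eigenvector $\nu_\rho(\gamma^-,\gamma^+)$ depends only on the linear part $\mathtt{L}_\rho = \varrho$, since it is defined purely from the action of $\varrho(\gamma)$ on $\mathbb{H}$ (and its ideal fixed points) as the eigenvector of $\varrho(\gamma)$ with eigenvalue $1$. Consequently, for the representation $(\varrho, c\mathtt{u})$ we have
\begin{align*}
\alpha_{(\varrho,c\mathtt{u})}(\gamma)
&= \bigl\langle c\mathtt{u}(\gamma) \,\big|\, \nu_\varrho(\gamma^-,\gamma^+)\bigr\rangle \\
&= c\,\bigl\langle \mathtt{u}(\gamma) \,\big|\, \nu_\varrho(\gamma^-,\gamma^+)\bigr\rangle
= c\,\alpha_{(\varrho,\mathtt{u})}(\gamma),
\end{align*}
by bilinearity of the Lorentzian form.

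Next I would feed this scaling relation into the counting formula \eqref{h}. For each $T>0$,
\[
\bigl\{[\gamma]\in\mathsf{O}(\Gamma) \mid \alpha_{(\varrho,c\mathtt{u})}(\gamma)\leqslant T\bigr\}
= \bigl\{[\gamma]\in\mathsf{O}(\Gamma) \mid \alpha_{(\varrho,\mathtt{u})}(\gamma)\leqslant T/c\bigr\},
\]
so, setting $S\defeq T/c$ and letting $T\to\infty$ (equivalently $S\to\infty$),
\[
h_{(\varrho,c\mathtt{u})}
= \lim_{T\to\infty}\frac{1}{T}\log\#\bigl\{[\gamma] \mid \alpha_{(\varrho,\mathtt{u})}(\gamma)\leqslant T/c\bigr\}
= \frac{1}{c}\lim_{S\to\infty}\frac{1}{S}\log\#\bigl\{[\gamma] \mid \alpha_{(\varrho,\mathtt{u})}(\gamma)\leqslant S\bigr\}
= \frac{1}{c}\, h_{(\varrho,\mathtt{u})}.
\]

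There is essentially no obstacle here: the argument is a direct substitution once one notices that $\nu$ is linear-part data only. The one thing worth flagging is that the statement presupposes $(\varrho, c\mathtt{u})\in\mathsf{Hom}_{\hbox{\tiny $\mathrm{M}$}}(\Gamma,\mathsf{G})$, which is already asserted in the remark preceding the lemma (it follows from the cocycle identity being preserved under scaling of $\mathtt{u}$ and from the Opposite Sign Lemma together with Theorem \ref{glm}, since $\Upsilon_{(\varrho,c\mathtt{u})} = c\,\Upsilon_{(\varrho,\mathtt{u})}$ is nonvanishing whenever $\Upsilon_{(\varrho,\mathtt{u})}$ is).
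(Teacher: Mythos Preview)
Your proof is correct and follows essentially the same approach as the paper: both establish $\alpha_{(\varrho,c\mathtt{u})}(\gamma)=c\,\alpha_{(\varrho,\mathtt{u})}(\gamma)$ from the definition of the Margulis invariant (using that $\nu$ depends only on the linear part), and then feed this into the counting formula \eqref{h} with the substitution $T\mapsto T/c$. Your version spells out slightly more detail (the substitution $S=T/c$ and the justification that $(\varrho,c\mathtt{u})\in\mathsf{Hom}_{\hbox{\tiny $\mathrm{M}$}}(\Gamma,\mathsf{G})$), but the argument is the same.
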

\begin{proof}
Using the definition of the Margulis invariant we have that
\begin{align*}
\alpha_{(\varrho, c\mathtt{u})}(\gamma) &= \left\langle c\mathtt{u}(\gamma) \mid \nu_\varrho\left(\gamma\right)\right\rangle \\
&= c \left\langle \mathtt{u}(\gamma) \mid \nu_\varrho\left(\gamma\right)\right\rangle = c \left.\alpha_{(\varrho, \mathtt{u})}(\gamma)\right. .
\end{align*}
where $\nu_\varrho\left(\gamma\right)\defeq\nu_\varrho\left(\gamma^-, \gamma^+\right)$.
Now using equation \ref{h} we get that
\begin{align*}
h_{(\varrho, c\mathtt{u})} &= \lim_{T \to \infty}\frac{1}{T} \log \left(\# \left\{\gamma \in \Gamma \mid \alpha_{(\varrho, c\mathtt{u})}(\gamma) \leqslant T \right\}\right)\\
&= \lim_{T \to \infty}\frac{1}{T} \log \left(\# \left\{\gamma \in \Gamma \mid \alpha_{(\varrho, \mathtt{u})}(\gamma) \leqslant \frac{T}{c} \right\}\right)\\
&= \frac{1}{c}\lim_{T \to \infty}\frac{1}{T} \log \left(\# \left\{\gamma \in \Gamma \mid \alpha_{(\varrho, \mathtt{u})}(\gamma) \leqslant T \right\}\right) = \frac{1}{c}\left. h_{(\varrho, \mathtt{u})}\right. .
\end{align*}
\end{proof}

\begin{lemma}\label{multiverse}
Let $\mathsf{Hom}_{\hbox{\tiny $\mathrm{M}$}}(\Gamma, \mathsf{G})_k$ be a constant entropy section for some real number $k$ then $\mathsf{Hom}_{\hbox{\tiny $\mathrm{M}$}}(\Gamma, \mathsf{G})_k$ is a codimension one analytic submanifold of $\mathsf{Hom}_{\hbox{\tiny $\mathrm{M}$}}(\Gamma, \mathsf{G})$.
\end{lemma}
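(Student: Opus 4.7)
The plan is to apply the analytic version of the regular value theorem to the entropy function $h : \mathsf{Hom}_{\hbox{\tiny $\mathrm{M}$}}(\Gamma, \mathsf{G}) \longrightarrow \mathbb{R}$, which has been shown to be real analytic in Lemma \ref{ent}. Since $\mathsf{Hom}_{\hbox{\tiny $\mathrm{M}$}}(\Gamma, \mathsf{G})$ is an analytic manifold, the preimage of any regular value is automatically a codimension one analytic submanifold. The entire task therefore reduces to showing that every $k > 0$ in the image is a regular value of $h$, i.e., that the differential $\mathsf{D}_\rho h$ is non-zero at every $\rho \in h^{-1}(k)$.

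The key observation is that the homogeneity relation in Lemma \ref{h1} produces an explicit direction along which $h$ strictly decreases. Given $\rho = (\varrho, \mathtt{u}) \in h^{-1}(k)$, I would consider the analytic one-parameter path $\rho_s \defeq (\varrho, e^s \mathtt{u})$ for $s$ in a neighborhood of $0$. This path stays in $\mathsf{Hom}_{\hbox{\tiny $\mathrm{M}$}}(\Gamma, \mathsf{G})$, since earlier in the paper (in the proof of the lemma giving the manifold structure on $\mathsf{Hom}_{\hbox{\tiny $\mathrm{M}$}}(\Gamma, \mathsf{G})$) it is recalled from Goldman--Labourie--Margulis that the set of Margulis cocycles with fixed linear part $\varrho$ forms an open convex cone in $\mathsf{Z}^1(\varrho(\Gamma), \mathbb{R}^3)$, and hence is closed under positive scalings. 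Applying Lemma \ref{h1} with $c = e^s$ yields
\[
h_{\rho_s} = e^{-s} h_\rho = e^{-s} k,
\]
and differentiating at $s = 0$ gives
\[
\left.\frac{d}{ds}\right|_{s=0} h_{\rho_s} = -k \neq 0.
\]
Thus the tangent vector $\left.\frac{d}{ds}\right|_{s=0} \rho_s \in \mathsf{T}_\rho \mathsf{Hom}_{\hbox{\tiny $\mathrm{M}$}}(\Gamma, \mathsf{G})$ does not lie in the kernel of $\mathsf{D}_\rho h$, so the differential is surjective onto $\mathbb{R}$.

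Having verified the regular value condition at every $\rho \in h^{-1}(k)$, the analytic implicit function theorem then provides the conclusion that $\mathsf{Hom}_{\hbox{\tiny $\mathrm{M}$}}(\Gamma, \mathsf{G})_k = h^{-1}(k)$ is an analytic submanifold of codimension one in $\mathsf{Hom}_{\hbox{\tiny $\mathrm{M}$}}(\Gamma, \mathsf{G})$. The only step that could have caused trouble is ensuring that the scaling path $\rho_s$ actually remains inside the space of Margulis representations rather than escaping into non-proper representations, but this is automatic from the open convex cone structure of the fibers of $\mathtt{L}$, and the positivity of the entropy $h_\rho = k > 0$ guarantees that the derivative computed above is genuinely non-zero.
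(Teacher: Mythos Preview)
Your proof is correct and follows essentially the same approach as the paper: both use the scaling homogeneity from Lemma~\ref{h1} to exhibit a direction in which $\mathsf{D}_\rho h$ is nonzero, then invoke the implicit function theorem. The only cosmetic difference is that the paper parametrizes the scaling path as $\left(\varrho,\frac{1}{1+t}\mathtt{u}\right)$ rather than $(\varrho,e^s\mathtt{u})$.
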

\begin{proof}
We consider the analytic map $h$ and using lemma \ref{h1} notice that 
\begin{align*}
\left.\frac{d}{dt}\right|_{t = 0}h\left(\varrho, \frac{1}{1+t}\mathtt{u}\right) &= h(\varrho, \mathtt{u}) \left.\frac{d}{dt}\right|_{t = 0} (1+t)\neq 0.
\end{align*}
Hence $\texttt{Rk}(\mathsf{D}h_{(\varrho, \mathtt{u})})=1$. Now using the Implicit function theorem we conclude that $\mathsf{Hom}_{\hbox{\tiny $\mathrm{M}$}}(\Gamma, \mathsf{G})_k = h^{-1}(k)$ is an analytic submanifold of $\mathsf{Hom}_{\hbox{\tiny $\mathrm{M}$}}(\Gamma, \mathsf{G})$ with codimension 1.
\end{proof}
\begin{remark}
The following map
\begin{align*}
{\mathcal{I}}_k : \mathsf{Hom}_{\hbox{\tiny $\mathrm{M}$}}(\Gamma, \mathsf{G})_1 &\longrightarrow \mathsf{Hom}_{\hbox{\tiny $\mathrm{M}$}}(\Gamma, \mathsf{G})_k\\
(\varrho, \mathtt{u}) &\longmapsto \left(\varrho, \frac{1}{k}\mathtt{u}\right)
\end{align*}
gives an analytic isomorphism between $\mathsf{Hom}_{\hbox{\tiny $\mathrm{M}$}}(\Gamma, \mathsf{G})_1$ and $\mathsf{Hom}_{\hbox{\tiny $\mathrm{M}$}}(\Gamma, \mathsf{G})_k$.
\end{remark}
\begin{lemma}
The space $\mathsf{Hom}_{\hbox{\tiny $\mathrm{M}$}}(\Gamma, \mathsf{G})$ is analytically isomorphic to the space $\mathsf{Hom}_{\hbox{\tiny $\mathrm{M}$}}(\Gamma, \mathsf{G})_1 \times \mathbb{R}$.
\end{lemma}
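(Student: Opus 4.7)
The plan is to explicitly construct mutually inverse analytic maps between $\mathsf{Hom}_{\hbox{\tiny $\mathrm{M}$}}(\Gamma, \mathsf{G})$ and $\mathsf{Hom}_{\hbox{\tiny $\mathrm{M}$}}(\Gamma, \mathsf{G})_1 \times \mathbb{R}$ using the entropy function $h$ as a ``radial coordinate''. The guiding idea is Lemma \ref{h1}: scaling the translation part $\mathtt{u}$ by a positive constant $c$ divides the entropy by $c$. Hence for any $(\varrho,\mathtt{u})\in\mathsf{Hom}_{\hbox{\tiny $\mathrm{M}$}}(\Gamma,\mathsf{G})$, the representation $(\varrho,h_{(\varrho,\mathtt{u})}\mathtt{u})$ lies in the entropy-$1$ section.

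First I would define
\begin{align*}
\Phi : \mathsf{Hom}_{\hbox{\tiny $\mathrm{M}$}}(\Gamma, \mathsf{G}) &\longrightarrow \mathsf{Hom}_{\hbox{\tiny $\mathrm{M}$}}(\Gamma, \mathsf{G})_1 \times \mathbb{R}\\
(\varrho,\mathtt{u}) &\longmapsto \bigl((\varrho, h_{(\varrho,\mathtt{u})}\mathtt{u}),\; \log h_{(\varrho,\mathtt{u})}\bigr),
\end{align*}
where $\log$ is well defined because $h>0$. To see this is well defined I would verify, using Lemma \ref{h1} with $c = h_{(\varrho,\mathtt{u})}$, that
\[
h_{(\varrho,\,h_{(\varrho,\mathtt{u})}\mathtt{u})} = \frac{1}{h_{(\varrho,\mathtt{u})}}\,h_{(\varrho,\mathtt{u})} = 1,
\]
so the first component really lands in $\mathsf{Hom}_{\hbox{\tiny $\mathrm{M}$}}(\Gamma, \mathsf{G})_1$. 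Also, scaling the translation part by a positive constant preserves the Opposite Sign Lemma and hence preserves membership in $\mathsf{Hom}_{\hbox{\tiny $\mathrm{M}$}}(\Gamma, \mathsf{G})$.

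Next I would define the candidate inverse
\begin{align*}
\Psi : \mathsf{Hom}_{\hbox{\tiny $\mathrm{M}$}}(\Gamma, \mathsf{G})_1 \times \mathbb{R} &\longrightarrow \mathsf{Hom}_{\hbox{\tiny $\mathrm{M}$}}(\Gamma, \mathsf{G})\\
((\varrho,\mathtt{u}'),t) &\longmapsto (\varrho, e^{-t}\mathtt{u}').
\end{align*}
Since $e^{-t}>0$, the image lies in $\mathsf{Hom}_{\hbox{\tiny $\mathrm{M}$}}(\Gamma, \mathsf{G})$ by the same Opposite Sign argument. Applying Lemma \ref{h1} again, $h_{(\varrho,e^{-t}\mathtt{u}')} = e^{t}\cdot h_{(\varrho,\mathtt{u}')} = e^{t}$, from which the identities $\Phi\circ\Psi = \mathrm{id}$ and $\Psi\circ\Phi = \mathrm{id}$ follow by direct substitution.

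Finally I would verify analyticity of both maps. The map $\Phi$ is analytic because $h$ is analytic on $\mathsf{Hom}_{\hbox{\tiny $\mathrm{M}$}}(\Gamma, \mathsf{G})$ (Lemma \ref{ent}), $\log$ is analytic on $(0,\infty)$, and scalar multiplication on the translation part is analytic. The map $\Psi$ is analytic since $e^{-t}$ is analytic in $t$ and scalar multiplication is analytic. Since $\mathsf{Hom}_{\hbox{\tiny $\mathrm{M}$}}(\Gamma, \mathsf{G})_1$ is an analytic submanifold by Lemma \ref{multiverse}, the product $\mathsf{Hom}_{\hbox{\tiny $\mathrm{M}$}}(\Gamma, \mathsf{G})_1 \times \mathbb{R}$ carries its natural analytic structure, and $\Phi,\Psi$ are mutually inverse analytic maps. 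There is no substantial obstacle here; the only point requiring care is confirming that scalar multiplication of $\mathtt{u}$ by an arbitrary positive constant preserves the Margulis spacetime property, which follows from the Opposite Sign Lemma.
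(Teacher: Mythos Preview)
Your argument is essentially the paper's own proof: both construct mutually inverse analytic maps by rescaling the translation part so as to normalize entropy. The only cosmetic difference is that you record the $\mathbb{R}$-coordinate as $\log h_\rho$ rather than $h_\rho$; this is harmless (indeed it makes the target genuinely all of $\mathbb{R}$ rather than $(0,\infty)$).

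One small correction: invoking the Opposite Sign Lemma to justify that $(\varrho,c\mathtt{u})\in\mathsf{Hom}_{\hbox{\tiny $\mathrm{M}$}}(\Gamma,\mathsf{G})$ for $c>0$ is not quite right, since that lemma gives only a necessary condition for properness. The correct justification is either Theorem~\ref{glm} (scaling $\mathtt{u}$ by $c>0$ scales $\Upsilon_\rho$ by $c$, so nonvanishing is preserved) or the open convex cone structure of the fibres of $\mathtt{L}$ noted earlier in the paper.
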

\begin{proof}
We define two analytic maps as follows
\begin{align*}
\mathfrak{h} : \mathsf{Hom}_{\hbox{\tiny $\mathrm{M}$}}(\Gamma, \mathsf{G}) &\longrightarrow \mathsf{Hom}_{\hbox{\tiny $\mathrm{M}$}}(\Gamma, \mathsf{G})_1 \times \mathbb{R}\\
\rho = (\varrho, \mathtt{u}) &\longmapsto \left((\varrho, h_{\rho}\mathtt{u}),h_\rho\right)
\end{align*}
and
\begin{align*}
\mathfrak{h}^\prime : \mathsf{Hom}_{\hbox{\tiny $\mathrm{M}$}}(\Gamma, \mathsf{G})_1 \times \mathbb{R} &\longrightarrow \mathsf{Hom}_{\hbox{\tiny $\mathrm{M}$}}(\Gamma, \mathsf{G})\\
 ((\varrho, \mathtt{u}),r) &\longmapsto \left(\varrho, \frac{1}{r} \mathtt{u}\right).
\end{align*}
We conclude our result by observing that $\mathfrak{h}^\prime \circ \mathfrak{h} = \mathsf{Id}$ and $\mathfrak{h}\circ\mathfrak{h}^\prime = \mathsf{Id}$.
\end{proof}
\begin{definition}\label{multi}
We define the $\textit{Margulis multiverse}$ with entropy $k$ to be 
\[\mathcal{M}_k \defeq \mathsf{Hom}_{\hbox{\tiny $\mathrm{M}$}}(\Gamma, \mathsf{G})_k/_{\sim}\] 
where $k$ is a positive real number and $\rho_1 \sim \rho_2$ if $\rho_1$ is a conjugate of $\rho_2$ by some element of the group $\mathsf{G}=\mathsf{SO}^0(2,1)\ltimes\mathbb{R}^3$.
\end{definition}

\subsection{Riemannian metric on Margulis multiverse}

In this section we finally prove that the pressure metric $\mathtt{P}$ restricted to the constant entropy sections of $\mathsf{Hom}_{\hbox{\tiny $\mathrm{M}$}}(\Gamma, \mathsf{G})$ is Riemannian.

\begin{proof}[Proof of Theorem \ref{mainthm}]
We consider the definition \ref{multi} and observe that the result follows from proposition \ref{final} and lemma \ref{multiverse}.
\end{proof}

\begin{proof}[Proof of Theorem \ref{m2}]
Let $\rho =(\mathtt{L}_\rho,\mathtt{u}_\rho)$ be a point in $\mathsf{Hom}_{\hbox{\tiny $\mathrm{M}$}}(\Gamma, \mathsf{G})$ and for $\epsilon>0$ let  
\[\{\rho_t\defeq(\mathtt{L}_{\rho},(1+t)\mathtt{u}_{\rho})\}_{t\in(-\epsilon,\epsilon)}\] be a smooth path in $\mathsf{Hom}_{\hbox{\tiny $\mathrm{M}$}}(\Gamma, \mathsf{G})$. We notice that if $\mathsf{f}_0$ is a reparametrization coming from $\rho$ then 
\[\mathsf{f}_t\defeq(1+t)\mathsf{f}_0\] is a reparametrization which comes from $\rho_t$. We also notice that the entropy
\[h_{\rho_t}=\frac{h_\rho}{1+t}.\]
Therefore we get
\[\left.\frac{d}{dt}\right|_{t=0}h_{\rho_t}\mathsf{f}_{\rho_t}=\left.\frac{d}{dt}\right|_{t=0}h_{\rho}\mathsf{f}_{\rho}=0.\]
Hence by proposition \ref{p} we get that $\mathtt{P}(\dot{\rho}_0,\dot{\rho}_0)=0$ where $\dot{\rho}_0\defeq\left.\frac{d}{dt}\right|_{t=0}\rho_t$ and $[\dot{\rho}_0]\neq0$ in $\mathsf{H}^1_{\rho_0}\left(\Gamma, \mathfrak{g}\right)$. Now using remark \ref{nonneg} we conclude that $\mathtt{P}$ has signature $(\dim(\mathcal{M})-1,0)$ over the moduli space $\mathcal{M}$.
\end{proof}

\end{document}